\theoremstyle{plain}
\newtheorem*{theorem*}{Theorem}
\newtheorem{theorem}[subsection]{Theorem}
\newtheorem{lemma}[subsection]{Lemma}
\newtheorem{definition}[subsection]{Definition}
\newtheorem{remark}[subsection]{Remark}
\DeclareMathOperator{\degree}{deg}
\DeclareMathOperator{\modulus}{mod}
\DeclareMathOperator{\rad}{rad}
\DeclareMathOperator{\rank}{rank}
\DeclareMathOperator{\kernel}{ker}
\renewcommand{\Re}{\operatorname{Re}}
\newcommand*\conj[1]{\overline{#1}}
\newcommand*\mathbfe[1]{\boldsymbol{#1}}
\newcommand{\rhopi}{$(\rho , \pi )$}
\newcommand*\strictmathcal[1]{\prescript{}{\mathrm{S}}{\mathcal{#1}}}
\newcommand{\strictrho}{\rho_{\mathrm{S}}}
\newcommand{\strictpi}{\pi_{\mathrm{S}}}
\newcommand{\EvenInd}{\delta_{\mathrm{E}}} 
\def\l@subsection{\@tocline{2}{0pt}{2.5pc}{5pc}{}}
\renewcommand*\env@matrix[1][*\c@MaxMatrixCols c]{%
  \hskip -\arraycolsep
  \let\@ifnextchar\new@ifnextchar
  \array{#1}}
\begin{document}

\title{Lattice Point Variance in Thin Elliptic Annuli over $\mathbb{F}_q [T]$}
\author{Michael Yiasemides}
\date{\today}
\address{School of Mathematical Sciences, University of Nottingham, Nottingham, NG7 2RD, UK}
\email{m.yiasemides.math@gmail.com}
\subjclass[2020]{Primary 11E25, 11H06; Secondary 11T24, 11T55, 15B05, 15B33}
\keywords{lattice points, mean, variance, ellipse, annulus, finite fields, function fields, Hankel matrix, sum of squares}

\maketitle

\begin{abstract}
For fixed coprime polynomials $U,V \in \mathbb{F}_q [T]$ with degrees of different parities, and a general polynomial $A \in \mathbb{F}_q [T]$, define the arithmetic function $S_{U,V} (A)$ to be the number of representations of $A$ of the form $UE^2 + VF^2$ with $E,F \in \mathbb{F}_q [T]$. We study the mean and variance of $S_{U,V}$ over short intervals in $\mathbb{F}_q [T]$, and this can be interpreted as the function field analogue of the mean and variance of lattice points in thin elliptic annuli, where the scaling factor of the ellipses is rational. Our main result is an asymptotic formula for the variance even when the length of the interval remains constant relative to the absolute value of the centre of the interval. In terms of lattice points, this means we obtain the variance in the so-called ``local'' or ``microscopic'' regime, where the area of the annulus remains constant relative to the inner radius. We also obtain asymptotic or exact formulas for almost all other lengths of the interval, and we see some interesting behaviour at the boundary between short and long intervals. Our approach is that of additive characters and Hankel matrices that we employed for the divisor function and a restricted sum-of-squares function in previous work, and we develop further results on Hankel matrices in this paper.
\end{abstract}


\allowdisplaybreaks


\section{Introduction} \label{section, introduction}

Lattice points have been a key area of study in mathematics, due to their intrinsic interest as well as the connections to number theory \cite{Heath-Brown1992_DistrMomErrTermDirichletDivProb, Ivic2009_DivFuncRZFShortInterv} and the applications to physics \cite{BerryTabor1977_LevelClusterRegularSpectrum, BleherLebowitz1994_EnergyLevelStatModelQuantSystUnivScalingLatticePoint, BleherLebowitz1994_VarNumbLatticePointRandomNarrowElliptStrip}. A very well known problem is the Gauss circle problem, which asks for the number of lattice points (in $\mathbb{Z}^2$) that lie in a circle:
\begin{align*}
N (t)
:= \Big\lvert \big\{ (x,y) \in \mathbb{Z}^2 : x^2 + y^2 \leq t \big\} \Big\rvert .
\end{align*}
We are interested in the asymptotic behaviour as $t \longrightarrow \infty$. It can be shown that the asymptotic main term is the area of the circle, $\pi t$. The error term is dependent on the behaviour of the lattice points near the boundary of the circle, and so we can see that the error term is at most proportional to the boundary length. That is, the error is $O (t^{\frac{1}{2}})$. Although, stronger bounds exist and it is conjectured that the error is $O (t^{\frac{1}{4} +\epsilon})$. \\

A related problem asks about the number of lattice points in a circular annulus
\begin{align*}
N \big( t , a(t) \big)
:= \Big\lvert \big\{ (x,y) \in \mathbb{Z}^2 :  t \leq x^2 + y^2 \leq t + a(t) \big\} \Big\rvert .
\end{align*}
Here, we are interested in the asymptotic behaviour as $t \longrightarrow \infty$ and $a(t) = o(t)$. We note that the area of the annulus is $\pi a(t)$, while the boundary is $\asymp t^{\frac{1}{2}}$. In particular, the lattice points near the boundary will have a larger relative contribution to $N \big( t , a(t) \big)$ if $a(t)$ grows slowly, and indeed the main term may not come from the area of the annulus if $a(t)$ grows slowly enough. This makes lattice points in circular annuli an interesting problem to study. \\

We briefly remark that the set up we have established for lattice points above (and below) is not standard, in that $t$ would usually denote the radius of the circle, and we would work with the width of the annulus instead of the area $a(t)$. However, this is the setup used in \cite{BleherLebowitz1994_EnergyLevelStatModelQuantSystUnivScalingLatticePoint, BleherLebowitz1994_VarNumbLatticePointRandomNarrowElliptStrip}, and we have chosen to adopt this because we are particularly interested in the relation between our results in function fields and their classical results. \\

Now, it is worth noting that lattice points in circular annuli are directly related to the arithmetic function
\begin{align*}
r (n)
:= \sum_{\substack{x,y \in \mathbb{Z} \\ x^2 + y^2 = n }} 1
= 4 \sum_{\substack{d \mid n \\ d=1,3 (\modulus 4)}} (-1)^{\frac{d-1}{2}} ,
\end{align*}
and its behaviour over intervals. Because $\frac{1}{4} r(n)$ is multiplicative, one can associate an $L$-function to it, and, by making use of Poisson summation, one can obtain results on its distribution over intervals. See \cite{Heath-Brown1992_DistrMomErrTermDirichletDivProb, Ivic2009_DivFuncRZFShortInterv} for further details. \\

This number theoretic approach is limited when one considers the generalisation of our lattice point problems to elliptic annuli, which no longer have an associated \emph{multiplicative} arithmetic function. Thus, lattice points in elliptic annuli are particularly interesting to study. To this end, we make the following definition, again adopting the setup from \cite{BleherLebowitz1994_EnergyLevelStatModelQuantSystUnivScalingLatticePoint, BleherLebowitz1994_VarNumbLatticePointRandomNarrowElliptStrip}. For $\mu > 0$,
\begin{align*}
N_{\mu} \big( t , a(t) \big)
:= \Big\lvert \big\{ (x,y) \in \mathbb{Z}^2 : t \leq x^2 + \mu y^2 \leq t + a(t) \big\} \Big\rvert .
\end{align*}

There are four regimes to consider, which depend on how the area of the annulus grows with respect to the boundary.

\begin{enumerate}
\item \underline{``Global'' or ``macroscopic'' regime:} $a(t)t^{-\frac{1}{2}} \longrightarrow \infty$ as $t \longrightarrow \infty$ (with $a(t) = o (t)$). In this regime, both the area and boundary tend to infinity, but the area grows at a faster rate. \\

\item \underline{``Saturation'' regime:} $a(t)t^{-\frac{1}{2}} \longrightarrow c$ as $t \longrightarrow \infty$, for a positive constant $c$. In this regime, the area and boundary both tend to infinity with the same order of growth. \\

\item \underline{``Intermediate'' or ``mesoscopic'' regime:} $a(t)t^{-\frac{1}{2}} \longrightarrow 0$ but $a(t) \longrightarrow \infty$ as $t \longrightarrow \infty$. In this regime, the area and boundary both tend to infinity, but the boundary grows at a faster rate. \\

\item \underline{``Local'' or ``microscopic'' regime:} $a(t)$ remains constant as $t \longrightarrow \infty$. In this regime, the area remains constant, while the boundary tends to infinity. \\
\end{enumerate}

Let us now highlight some of the key results that have been established in these regimes. In \cite{BleherLebowitz1994_EnergyLevelStatModelQuantSystUnivScalingLatticePoint}, Bleher and Lebowitz consider the variance
\begin{align*}
\frac{1}{T} \int_{t=T}^{2T} \Big( N_{1} \big( u^2 , a \big) - a \Big)^2 \phi (u) \mathrm{d} u ,
\end{align*}
where $\phi (t)$ is a suitable smoothing function given in (1.12) of \cite{BleherLebowitz1994_EnergyLevelStatModelQuantSystUnivScalingLatticePoint}. Note the replacement of $t$ by $u^2$; that $a$ is taken to be constant within the integral (although after the integral is performed we will consider limiting behaviour for $a$); and that $\mu = 1$ (that is, we are considering circular annuli). For the global regime, they prove that
\begin{align*}
\lim_{\substack{aT^{-2} \rightarrow 0 \\ a T^{-1} \rightarrow \infty }}
	\frac{1}{T} \int_{t=T}^{2T} \Big( N_{1} \big( u^2 , a \big) - \pi a \Big)^2 \phi (u) \mathrm{d} u
= v T ,
\end{align*}
for a constant $v$. For the saturation regime, they prove that
\begin{align*}
\lim_{a T^{-1} \rightarrow c }
	\frac{1}{T} \int_{t=T}^{2T} \Big( N_{1} \big( u^2 , a \big) - \pi a \Big)^2 \phi (u) \mathrm{d} u
= V (c) T ,
\end{align*}
for a function $V$ that they compute as an infinite series. They show that 
\begin{align*}
\lim_{T \longrightarrow \infty} \frac{1}{T} \int_{z=0}^{T} V(z) \mathrm{d} z
= v . 
\end{align*}
For the intermediate regime, for any constant $\omega \in \Big( 0 , \frac{1}{2} \Big)$, Bleher and Lebowitz \cite{BleherLebowitz1994_VarNumbLatticePointRandomNarrowElliptStrip} prove
\begin{align*}
\lim_{\substack{aT^{-\frac{1}{2}} \rightarrow 0 \\ a T^{-\omega} \rightarrow \infty }}
	\frac{1}{T} \int_{t=T}^{2T} \frac{\Big( N_{\mu} \big( t , a \big) - \pi \mu^{-\frac{1}{2}} a \Big)^2}{\pi \mu^{-\frac{1}{2}} a} \mathrm{d} t
= \begin{cases}
4 &\text{ if $\mu$ is Diophantine,} \\
\infty &\text{ if $\mu$ is a Liouville number.}
\end{cases}
\end{align*}
Whereas, if $\mu$ is rational and is equal to $\frac{p}{q}$ for coprime $p,q$, they prove
\begin{align} \label{statement, rational ellipse variance, intermediate regime}
\lim_{\substack{aT^{-\frac{1}{2}} \rightarrow 0 \\ a T^{-\omega} \rightarrow \infty }}
	\frac{1}{T} \int_{t=T}^{2T} \frac{\Big( N_{\mu} \big( t , a \big) - \pi \mu^{-\frac{1}{2}} a \Big)^2}{\pi \mu^{-\frac{1}{2}} a \lvert \log d \rvert} \mathrm{d} t
= c (\mu) ;
\end{align}
where $d = 2\pi \mu^{-\frac{1}{2}} a T^{-\frac{1}{2}}$, and
\begin{align*}
c (\mu)
= \begin{cases}
\frac{4}{\pi} d(p) d(q) (pq)^{-\frac{1}{2}} &\text{ if $p,q \equiv 1 \modulus 2$,} \\
& \\
\frac{1}{\pi} (6l+2) d(p') d(q) (pq)^{-\frac{1}{2}} &\text{ if $p = 2^l p'$ and $p',q \equiv 1 \modulus 2$.} 
\end{cases}
\end{align*}
It may be helpful for the reader to keep in mind that for the results from \cite{BleherLebowitz1994_VarNumbLatticePointRandomNarrowElliptStrip}, Bleher and Lebowitz use the variable $t$, whereas in \cite{BleherLebowitz1994_EnergyLevelStatModelQuantSystUnivScalingLatticePoint} they use $u^2$; and this effects the limits one must take for each regime. \\

Finally, again in the intermediate regime, assuming the additional condition that for all $\delta < 0$ we have $T,a \longrightarrow \infty$ with $\frac{a}{T} \longrightarrow 0$ and $\frac{a}{T} \gg T^{-\delta}$, Hughes and Rudnick \cite{HughesRudnick2004_DistrLatticePointThinAnnuli} prove for any interval $[a,b]$ that 
\begin{align*}
\lim_{T \longrightarrow \infty} \frac{1}{T} \mathrm{meas} 
	\bigg\{ t \in [T,2T] : \frac{N_1 (u^2 , a) - \pi a}{\sigma_1 u^{\frac{1}{2}}} \in [a,b] \bigg\}
= \frac{1}{\sqrt{2 \pi}} \int_{x=a}^{b} e^{-\frac{x^2}{2}} \mathrm{d} x ,
\end{align*}
where
\begin{align*}
{\sigma_1}^2
:= 16 \frac{a}{u} \log \big( \frac{u}{a} \big) .
\end{align*}
That is, we have a Gaussian limiting distribution. Wigman \cite{Wigman2006_DistrLattPointElliptAnnuli} extends upon this, to the elliptic cases for which $\mu^{\frac{1}{2}}$ is transcendental and strongly Diophantine\footnote{``Strongly Diophantine'' is defined by Wigman in \cite{Wigman2006_DistrLattPointElliptAnnuli}}, by proving that 
\begin{align*}
\lim_{T \longrightarrow \infty} \frac{1}{T} \mathrm{meas} 
	\bigg\{ t \in [T,2T] : \frac{N_1 (u^2 , a) - \pi \mu^{-\frac{1}{2}} a}{\sigma_2 u^{\frac{1}{2}}} \in [a,b] \bigg\}
= \frac{1}{\sqrt{2 \pi}} \int_{x=a}^{b} e^{-\frac{x^2}{2}} \mathrm{d} x ,
\end{align*}
where
\begin{align*}
{\sigma_2}^2
:= \frac{8 \pi}{\mu^{\frac{1}{2}}} a u^{-1} .
\end{align*}

Let us now discuss the results we prove in this paper. First, we will need to introduce some notation. Let $\mathcal{A} := \mathbb{F}_q [T]$, the polynomial ring over the finite field of order $q$, where $q$ will be taken to be an odd prime power. For $A \in \mathcal{A} \backslash \{ 0 \}$, we define $\lvert A \rvert := q^{\degree A}$, and $\lvert 0 \rvert := 0$. We denote the set of monics by $\mathcal{M}$, and the set of monic primes by $\mathcal{P}$. For $\mathcal{S} \subseteq \mathcal{A}$ and integers $n \geq 0$, we define $\mathcal{S}_n , \mathcal{S}_{\leq n} , \mathcal{S}_{<n}$ to be the set of elements in $\mathcal{S}$ with degree $n , \leq n , < n$, respectively (we take $\degree 0$ to be $-\infty$). For $A \in \mathcal{A}$ and $h \geq 0$, we define the interval of centre $A$ and radius $h$ by $I(A;<h) := \{ B \in \mathcal{A} : \degree (B-A) < h \}$.\footnote{In \cite{KeatingRodgersRudnik2018_SumDivFuncFqtMatrInt} they use a slightly different notation, namely $I(A;h) := \{ B \in \mathcal{A} : \degree (B-A) \leq h \}$. We prefer to use our definition of $I(A;<h)$ because it is more natural in some regards, in that $\lvert I(A;<h) \rvert = q^h$ as opposed to $\lvert I(A;h) \rvert = q^{h+1}$. Due to the explicit difference in notation, this should not cause any confusion.} We denote the greatest common (monic) divisor of two polynomials $A,B$, not both zero, by $(A,B)$. For a monic polynomial $A \in \mathcal{A} \backslash \{ 0 \}$, we define the totient function by
\begin{align*}
\phi (A)
:= \sum_{\substack{C \in \mathcal{A} \\ \degree C < \degree A \\ (C,A)=1 }}1 .
\end{align*}
Furthermore, we define the radical of a polynomial $A \neq 0$ by $\rad A := \prod_{P \mid A} P$, where the product is over \emph{primes}. Now let us define our function that counts the number of ``elliptic'' representations of a polynomial.

\begin{definition}
Let $U,V \in \mathcal{M}$ be coprime and such that $\degree U$ is even and $\degree V$. For $B \in \mathcal{A}$, we define
\begin{align*}
S_{U,V} (B)
:= \lvert \{ (E,F) \in \mathcal{A}^2 : B = UE^2 + VF^2 \} \rvert .
\end{align*}
\end{definition}

\begin{remark}\label{remark, explanation why U,V are odd, even coprime monic}.
Taking the degrees of $U,V$ to have different parities seems to be the natural choice over $\mathbb{F}_q [T]$. Indeed, if both were even/odd then $S_{U,V} (B)$ would be zero for all $B$ with odd/even degree. Furthermore,
if $U,V$ were both even or both odd, then for certain cases of $q$ (the order of our finite field) we could have cancellation when we sum $UE^2$ and $VF^2$. Taking $U,V$ to have degrees of different parities avoids all of these problems. \\

Suppose $B$ is monic. We note that if $\degree B = n$ is even, then we must have that $\degree E = \frac{n - \degree U}{2}$ and that $E$ or $-E$ are monic, while $F$ takes values in $\mathcal{A}$ with $\degree F \leq \frac{n - \degree V -1}{2}$. On the other hand, if $\degree B = n$ is odd, then we must have that $\degree F = \frac{n - \degree V}{2}$ and that $F$ or $-F$ are monic, while $E$ takes values in $\mathcal{A}$ with $\degree E \leq \frac{n-\degree U -1}{2}$.
\end{remark}

Now, we wish to understand the behaviour of $S_{U,V} (B)$ over intervals. For the mean, we have
\begin{align}
\begin{split} \label{statement, lattice point ellipse mean value calculations}
\frac{1}{q^n} \sum_{A \in \mathcal{M}_n } \sum_{B \in I (A; <h)} S_{U,V} (B)
= &\frac{q^h}{q^n} \sum_{A \in \mathcal{M}_n} S_{U,V} (A)
= \frac{q^h}{q^n} \sum_{A \in \mathcal{M}_n} \sum_{\substack{E,F \in \mathcal{A} : \\ A = UE^2 + VF^2}} 1 \\
= &\begin{cases}
\frac{2q^h}{q^n} \sum_{\substack{E \in \mathcal{M} \\ \degree E = \frac{n-\degree U}{2}}} \sum_{\substack{F \in \mathcal{A} \\ \degree F \leq \frac{n- \degree V -1}{2}}} 1 &\text{ if $n$ is even} \\
\frac{2q^h}{q^n} \sum_{\substack{E \in \mathcal{A} \\ \degree E = \frac{n-\degree U-1}{2}}} \sum_{\substack{F \in \mathcal{M} \\ \degree F \leq \frac{n- \degree V }{2}}} 1 &\text{ if $n$ is odd}
\end{cases} \\
= &2 q^{h - \frac{\degree U}{2} - \frac{\degree V}{2} + \frac{1}{2}} .
\end{split}
\end{align}

We now define
\begin{align*}
\Delta_{S_{U,V}} (A; <h)
= \sum_{B \in I (A; <h)} S_{U,V} (B)
	\hspace{1em} - q^{h - \frac{\degree U}{2} - \frac{\degree V}{2} + \frac{1}{2}} ,
\end{align*}
and we are interested in the variance
\begin{align*}
\frac{1}{q^n} \sum_{A \in \mathcal{M}_n } \bigg\lvert \Delta_{S_{U,V}} (A;<h) \bigg\rvert^2 .
\end{align*}
Let us describe the analogies for $\mathbb{F}_q [T]$ of the four regimes. We have that $q^n$ is analogous to $t$, while $q^h$ is analogous to the area $a$. In what follows, we always have $h \leq n$.

\begin{enumerate}
\item \underline{Global regime:} $n,h \longrightarrow \infty$ with $ h - \frac{n}{2} \longrightarrow \infty$. \\

\item \underline{Saturation regime:} $n,h \longrightarrow \infty$ with $h - \frac{n}{2} \longrightarrow c$, for a real constant $c$. \\

\item \underline{Intermediate regime:} $n,h \longrightarrow \infty$ with $\frac{n}{2} - h \longrightarrow \infty$. \\

\item \underline{Local regime:} $n \longrightarrow \infty$ with $h$ constant. \\
\end{enumerate}

Our main result is the following theorem. There are several cases due to the various ranges of $h$ that are covered, but the most significant is (\ref{statement, main theorem, case 3 asymptotic}) as this addresses short intervals, and in the context of lattice points it includes the intermediate and local regimes.

\begin{theorem} \label{main theorem, lattice point variance elliptic annuli}
Let $U,V \in \mathcal{M}$ with $\degree U$ even and $\degree V$ odd, and $(U,V)=1$. Let $0 \leq h \leq n$. In the interest of presentation, we define
\begin{align*}
&s:= \begin{cases} \degree U &\text{ if $n$ is even,} \\ \degree U +1 &\text{ if $n$ is odd;} \end{cases} 
&&s' := \frac{n-s}{2} ;
&&&n_1 := \Big\lfloor \frac{n+2}{2} \Big\rfloor ; \\
&t = \begin{cases} \degree V +1 &\text{ if $n$ is even,} \\ \degree V &\text{ if $n$ is odd;} \end{cases} 
&&t' := \frac{n-t}{2} ;
&&&n_2 := \Big\lfloor \frac{n+3}{2} \Big\rfloor .
\end{align*}
We have the following cases. \\

Case 1: If $n$ is even and $h \geq s'+s$, or if $n$ is odd and $h \geq t'+t$, then
\begin{align*}
\frac{1}{q^n} \sum_{A \in \mathcal{M}_n } \bigg\lvert \Delta_{S_{U,V}} (A;<h) \bigg\rvert^2 
= 0 . \\
\end{align*}

Case 2: If $n$ is even and $n_2 -1 \leq h < s'+s$, or if $n$ is odd and $n_2 -1 \leq h < t'+t$, we have
\begin{align*}
\frac{1}{q^n} \sum_{A \in \mathcal{M}_n } \bigg\lvert \Delta_{S_{U,V}} (A;<h) \bigg\rvert^2 
= q^{h} f_{U,V} (n,h) ;
\end{align*}
where, if $n$ is even, then
\begin{align*}
f_{U,V} (n,h)
= \frac{4(q-1)}{q^{\frac{1}{2}} \lvert UV \rvert^{\frac{1}{2}} } \sum_{r_1 = s'+1}^{n-h} q^{r_1 -(n-h)}
		\sum_{\substack{B_1 \in \mathcal{A}_{\leq s'+s-r_1} \\ B_2 \in \mathcal{M}_{\leq r_1 -s' -1} \\ (B_2 , U) \mid B_1 }} \lvert (B_2 , U) \rvert
		\sum_{\substack{C_1 \in \mathcal{A}_{\leq t'+t-r_1 -1} \\ C_2 \in \mathcal{A}_{\leq r_1 -t' -2} \\ (C_2 , V) \mid C_1 }} \lvert (C_2 , V) \rvert ;
\end{align*}
and if $n$ is odd, then
\begin{align*}
f_{U,V} (n,h)
= \frac{4(q-1)}{q^{\frac{1}{2}} \lvert UV \rvert^{\frac{1}{2}} } \sum_{r_1 = t'+1}^{n-h} q^{r_1 -(n-h)}
		\sum_{\substack{B_1 \in \mathcal{A}_{\leq s'+s-r_1 -1} \\ B_2 \in \mathcal{A}_{\leq r_1 -s' -2} \\ (B_2 , U) \mid B_1 }} \lvert (B_2 , U) \rvert
		\sum_{\substack{C_1 \in \mathcal{A}_{\leq t'+t-r_1} \\ C_2 \in \mathcal{M}_{\leq r_1 -t' -1} \\ (C_2 , V) \mid C_1 }} \lvert (C_2 , V) \rvert .
\end{align*}
Furthermore, we have the following bound for all $n$:
\begin{align*}
f_{U,V} (n,h)
\leq 4 q^{\frac{1}{2}}
	\lvert UV \rvert^{\frac{1}{2}} 
	(\log_q \degree U)
	(\log_q \degree V) . \\
\end{align*}

Case 3: Suppose $3 (\degree UV +1) \leq h < \min \{ s' , t' \} -1$ (and $n$ can be even or odd). Then,
\begin{align*}
&\frac{1}{q^n} \sum_{A \in \mathcal{M}_n } \bigg\lvert \Delta_{S_{U,V}} (A;<h) \bigg\rvert^2 \\
= &4 (1-q^{-1}) q^h M (U,V) \Big( \frac{n}{2} -h \Big) 
	+ q^{2h - (n_2 -1)} f_{U,V} (n , n_1 -1) 
	+ O_{\epsilon} (q^h \lvert UV \rvert^{-1+\epsilon}) 
	+ O (q^{h + \frac{3}{2}} \degree UV ) ;
\end{align*}
where, for a non-zero polynomial $A$, we define $e_P (A)$ to be the maximal integer such that $P^{e_P (A)} \mid A$; and
\begin{align*}
M (U,V)
:= \lvert UV \rvert^{-1} 
	\prod_{P \mid UV} \Bigg( 1 + \bigg( \frac{1- \lvert P \rvert^{-1}}{1+ \lvert P \rvert^{-1}} \bigg)  e_P (UV) \Bigg) .
\end{align*}
In particular, if $n \longrightarrow \infty$ with $\frac{n}{2} - h \longrightarrow \infty$ (note this includes both the intermediate and local regimes), we have
\begin{align} \label{statement, main theorem, case 3 asymptotic}
\frac{1}{q^n} \sum_{A \in \mathcal{M}_n } \bigg\lvert \Delta_{S_{U,V}} (A;<h) \bigg\rvert^2 
\sim 4 (1-q^{-1}) q^h M (U,V) \Big( \frac{n}{2} -h \Big) .
\end{align}
\end{theorem}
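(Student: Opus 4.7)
The plan is to expand $\sum_{B\in I(A;<h)} S_{U,V}(B)$ via the standard short-interval Fourier expansion on $\mathbb{F}_q[T]$. Writing the interval indicator as $q^{h-n}\sum_\alpha \psi(\alpha(B-A))$, where $\alpha$ ranges over the appropriate quotient of $T^{-1}\mathbb{F}_q[[T^{-1}]]$ and $\psi$ is the standard Hayes additive character, one obtains
\[
\sum_{B\in I(A;<h)} S_{U,V}(B) \;=\; q^{h-n}\sum_\alpha \psi(-\alpha A)\, T_U(\alpha)\, T_V(\alpha),
\]
with $T_U(\alpha) := \sum_E \psi(\alpha U E^2)$ and $T_V(\alpha) := \sum_F \psi(\alpha V F^2)$, and ranges on $E,F$ dictated by the parity of $n$ as in Remark \ref{remark, explanation why U,V are odd, even coprime monic}. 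The $\alpha = 0$ term reproduces the mean (\ref{statement, lattice point ellipse mean value calculations}), so $\Delta_{S_{U,V}}$ captures exactly the $\alpha\neq 0$ contribution; squaring and averaging over $A\in\mathcal{M}_n$ diagonalises via orthogonality of $\psi(-\alpha A)$ and reduces the variance, up to an explicit prefactor, to
\[
q^{2(h-n)}\sum_{\alpha\neq 0} |T_U(\alpha)|^2\,|T_V(\alpha)|^2.
\]

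Each quadratic character sum $T_U(\alpha)$, $T_V(\alpha)$ is evaluated by the Hankel-matrix technique developed in the author's earlier work on the divisor function and the restricted sum-of-squares function. Expanding $\alpha$, $U$, $E$ in coefficients, the exponent $\alpha U E^2$ becomes a quadratic form in the coefficients of $E$ whose matrix is the Hankel matrix of $\alpha U$, so the Gauss-type sum $T_U(\alpha)$ has support and magnitude determined by the rank of that Hankel matrix. Dualising back to $\alpha$, the resulting count is expressed as a sum over auxiliary pairs $(B_1,B_2)$ subject to the divisibility condition $(B_2,U)\mid B_1$ with weight $|(B_2,U)|$; analogously for $T_V$ with $(C_1,C_2)$ and $(C_2,V)\mid C_1$. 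Substituting into the diagonal $\alpha$-sum and tracking the range of conductors immediately gives Case 1 (no admissible $\alpha\neq 0$ exists, so the variance vanishes) and the exact identity for $f_{U,V}(n,h)$ in Case 2; the stated bound on $f_{U,V}$ then follows from the divisor estimate $\sum_{B_2\in\mathcal{A}_{\leq k}} |(B_2,U)| \ll |U|\log_q|U|$ obtained by multiplicativity.

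Case 3 is the heart of the theorem. Here $h<n_2-1$, so I stratify the $\alpha$-sum by the ``conductor'' parameter $r_1$ already visible in $f_{U,V}$, which runs over roughly $\frac{n}{2}-h$ values. The topmost shell $r_1 = n_1-1$ (with the obvious modification for odd $n$) contributes a Case 2-type boundary term $q^{2h-(n_2-1)}f_{U,V}(n,n_1-1)$. For the remaining shells, with $r_1$ well inside $(\degree UV,\,n/2)$, the Hankel rank is large enough that the divisor-weighted double sum stabilises: a direct Euler-factor computation yields
\[
q^{-2r_1}\sum_{\substack{B_1,B_2\\(B_2,U)\mid B_1}}\!\!|(B_2,U)|\sum_{\substack{C_1,C_2\\(C_2,V)\mid C_1}}\!\!|(C_2,V)| \;=\; M(U,V) + O_\epsilon(|UV|^{-1+\epsilon}),
\]
the local factor at each $P\mid UV$ evaluating to $1+\frac{1-|P|^{-1}}{1+|P|^{-1}}e_P(UV)$ by a geometric series. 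Summing the stable value over the $\sim \frac{n}{2}-h$ admissible shells, and incorporating the factor $4(1-q^{-1})q^h$ coming from the $q$-prefactors and the monic/non-monic doubling of Remark \ref{remark, explanation why U,V are odd, even coprime monic}, produces the main term; the tail shells near $r_1 \sim \degree UV$, where the Hankel rank analysis degrades, contribute the residual $O(q^{h+3/2}\degree UV)$.

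The main obstacle is the Hankel step: one must prove that $T_U(\alpha)$ and $T_V(\alpha)$ admit the explicit divisor-weighted parametrisation described, as the relevant Hankel matrices are rectangular, reflecting the parity mismatch between $\degree U$ and $\degree V$, and their kernels must be shown to be parametrised precisely by the pairs $(B_1,B_2)$ with $(B_2,U)\mid B_1$, each contributing multiplicity $|(B_2,U)|$. Once this structural lemma is in hand, the asymptotic (\ref{statement, main theorem, case 3 asymptotic}) is immediate: in the intermediate/local regime $\frac{n}{2}-h\to\infty$ with $U,V$ fixed, the errors $O_\epsilon(q^h|UV|^{-1+\epsilon})$ and $O(q^{h+3/2}\degree UV)$ together with the boundary term $q^{2h-(n_2-1)}f_{U,V}(n,n_1-1)$ are all of size $o\!\left(q^h(\tfrac{n}{2}-h)\right)$, leaving only the main term $4(1-q^{-1})q^h M(U,V)(\tfrac{n}{2}-h)$.
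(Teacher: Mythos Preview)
Your high-level setup is correct and matches the paper: the Fourier/orthogonality reduction to $\sum_{\mathbfe{\alpha}\in\mathcal{L}_n^h\setminus\{\mathbf{0}\}}|T_U(\mathbfe{\alpha})|^2|T_V(\mathbfe{\alpha})|^2$ and the Hankel quadratic-form evaluation of $T_U,T_V$ via Lemma~\ref{lemma, quadratic form values over monics and nonmonics} are exactly how Cases~1 and~2 go through, and your account of those cases is essentially right.

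The gap is in Case~3. You treat it as an extension of the Case~2 identity to a longer range of the conductor $r_1$, claiming that for $r_1$ well inside $(\degree UV,\,n/2)$ the divisor-weighted double sum stabilises to $M(U,V)$ and that summing over $\sim\tfrac{n}{2}-h$ shells produces the main term. But the Case~2 formula is only valid for $\mathbfe{\alpha}$ with $\strictrho(\mathbfe{\alpha})=0$ (lower skew-triangular Hankel matrices), and its inner sums are supported on $r_1\in[s'+1,s'+s]$---at most $\degree U$ shells, not $\tfrac{n}{2}-h$. In Case~3 the condition $h<n_2-1$ opens up the range $h+1\le\strictrho(\mathbfe{\alpha})\le n_2-1$, and \emph{this} range, not an extension of the Case~2 range, is where the main term lives. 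For such $\mathbfe{\alpha}$ the kernel sizes are governed not by the $(B_1,B_2)$ parametrisation you describe but by $\lvert(A_1(\mathbfe{\alpha}),U)\rvert$ and $\lvert(A_1(\mathbfe{\alpha}),V)\rvert$ via Lemma~\ref{lemma, U reduction, 1 dimensional kernel case}. To turn this into the arithmetic factor $M(U,V)$ the paper needs the bijection of Lemma~\ref{lemma, bijection from quasiregular Hankel matrices to M_r times A_<r}, which converts $\sum_{\mathbfe{\alpha}\in\strictmathcal{L}_{n-1}^h(r,r,0)}\lvert(A_1(\mathbfe{\alpha}),W)\rvert$ into a sum over coprime pairs $(A,B)\in\mathcal{M}_r\times\mathcal{A}_{<r-h}$, and then a Perron/contour argument (Lemma~\ref{lemma, phi (B)/B^2 sum, perron application}) to extract the Euler product. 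Your attribution of the secondary terms is also off: the boundary contribution $q^{2h-(n_2-1)}f_{U,V}(n,n_1-1)$ is the total $\strictrho=0$ piece (not a single top shell), and the $O(q^{h+3/2}\degree UV)$ arises from a Cauchy--Schwarz bound on the high-rank range $r(\mathbfe{\alpha})\ge\min\{s',t'\}+1$, not from small-$r_1$ tails.
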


It is of interest to note that (\ref{statement, main theorem, case 3 asymptotic}) bears similarity to (\ref{statement, rational ellipse variance, intermediate regime}). The scalings are different, but we can see that $\big( \frac{n}{2} -h \big)$ in (\ref{statement, main theorem, case 3 asymptotic}) accounts for the $\rvert \log d \rvert \approx \big( \frac{1}{2} \log T - \log a \big)$ in $(\ref{statement, rational ellipse variance, intermediate regime})$. Also, the factor of $M (U,V)$ in (\ref{statement, main theorem, case 3 asymptotic}) is similar to, but not the exact analogue of, $d(p) d(q)$ in (\ref{statement, main theorem, case 3 asymptotic}). It would be interesting to investigate why this difference occurs. Furthermore, we note that (\ref{statement, main theorem, case 3 asymptotic}) includes the local regime, and as far as we are aware this is the first time that results on this regime have been obtained, either in the classical or function field setting. \\

Cases 1 and 2 include the saturation regime for $c$ positive (recall, in our description of the saturation regime, $c$ is such that $h - \frac{n}{2} \longrightarrow c$). When $c$ is very small, and thus $h$ is just slightly larger than $\frac{n}{2}$, this is case 2 and we see some interesting behaviour. \\

Finally, the global regime is also included in Case 1, and we see vanishing of the variance. This is not unusual in function fields; for example, we see vanishing of the variance over large intervals of the divisor function as well. \\

The approach we employ to prove Theorem \ref{main theorem, lattice point variance elliptic annuli} is different to the approaches that have been used for the classical problem described earlier, and our approach makes use of the structure of polynomial ring. \\

Full details can be found in Section \ref{section, Hankel matrices}, but a short summary of our approach is the following. We count solutions to the equation $B - UE^2 - VF^2 = 0$ using the indicator function
\begin{align*}
\text{$\mathbbm{1} (A)$}
:= \begin{cases}
1 &\text{ if $A=0$,} \\
0 &\text{ if $A \in \mathcal{A} \backslash \{ 0 \}$.}
\end{cases}
\end{align*}
We then take a certain Fourier expansion of $\mathbbm{1}$ using additive characters and, because each function in the expansion is additive, we can consider each term $B$, $UE^2$, and $VF^2$ separately. This ultimately requires us to understand the kernel structure of Hankel matrices\footnote{A Hankel matrix is a matrix where all the entries on a given skew-diagonal are identical.} over $\mathbb{F}_q$. Interestingly, the kernel of a Hankel matrix can be interpreted as the linear span of two coprime polynomials. Details can be found in Section \ref{section, Hankel matrices}, but a brief indication of how Hankel matrices appear is the following. Suppose we have two polynomials $E = e_0 + e_1 T + \ldots + e_l T^l \in \mathcal{A}_l$ and $F = f_0 + f_1 T + \ldots + f_m T^m \in \mathcal{A}_m$ with $l+m=n$. With a few calculations, we can see that
\begin{align*}
\begin{matrix}
EF = \begin{pmatrix} e_0 & e_1 & \cdots & e_l \end{pmatrix} \\ \\ \\ \\ \\ \\
\end{matrix}
\begin{pmatrix}
1 & T & T^2 & \cdots & \cdots & T^m \\
T & T^2 &  &  &  & \vdots \\
T^2 &  &  &  &  & \vdots \\
\vdots &  &  &  &  & \vdots \\
\vdots &  &  &  &  & T^{n-1} \\
T^l & \cdots & \cdots & \cdots & T^{n-1} & T^{n} 
\end{pmatrix}
\begin{matrix}
\begin{pmatrix}
f_0 \\ f_1 \\ \vdots \\ f_m
\end{pmatrix}
\\ \\ \\
\end{matrix} .
\end{align*}

Now, this is the approach we previously used for the variance over short intervals in $\mathbb{F}_q [T]$ for the divisor function \cite{Yiasemides2021_VariCorrDivFuncFpTHankelMatr_ArXiv_v2} and for a restricted sum-of-squares function \cite{Yiasemides2022_VariSumTwoSquareOverIntervalFqT_I_Arxiv}. Indeed, in those cases we are counting solutions to the equations $B-EF=0$ and $B-  E^2 - F^2 = 0$. However, in this paper the situation is more complex due to the fact that we have terms in the equation $B - UE^2 - VF^2 = 0$ that are products of three polynomials. Typically, this would mean we must work with three-dimensional Hankel arrays (tensors), which is considerably more difficult. Indeed, this is also the obstacle that we would encounter were we to investigate the variance over short intervals of higher divisor functions, such as $d_3$ where we would work with the equation $B - EFG = 0$; and this is an unsolved problem with powerful implications. However, because $U,V$ are fixed, we are able to make progress, but we must obtain further results on Hankel matrices. Specifically, these are Lemmas \ref{lemma, bijection from quasiregular Hankel matrices to M_r times A_<r} and \ref{lemma, U reduction, 1 dimensional kernel case}, and they involve understanding the common factors between $UV$ and the polynomials found in the kernels of Hankel matrices. Most of the rest of Section \ref{section, Hankel matrices} is dedicated to providing results on Hankel matrices that we have already established in \cite {Yiasemides2021_VariCorrDivFuncFpTHankelMatr_ArXiv_v2, Yiasemides2022_VariSumTwoSquareOverIntervalFqT_I_Arxiv}. \\

The proof of the entirety of Theorem \ref{main theorem, lattice point variance elliptic annuli} is given in Section \ref{section, main theorem proof}. For a discussion on how one could investigate the extension of our approach to (1) moments higher than the variance, (2) to other arithmetic functions such as higher divisor functions, and (3) to a possible classical analogue, then we refer the reader to Subsection 1.4 of \cite{Yiasemides2021_VariCorrDivFuncFpTHankelMatr_ArXiv_v2}.

\section{Hankel Matrices} \label{section, Hankel matrices}

An $(l+1) \times (m+1)$ Hankel matrix over $\mathbb{F}_q$ is a matrix of the form 
\begin{align*}
\begin{pNiceMatrix}
\alpha_0 & \alpha_1 & \alpha_2 & \Cdots &  & \alpha_m \\
\alpha_1 & \alpha_2 &  &  &  & \Vdots \\
\alpha_2 &  &  &  &  & \\
\Vdots &  &  &  &  & \\
 &  &  &  &  & \alpha_{n-1} \\
\alpha_l &  &  & \Cdots & \alpha_{n-1} & \alpha_{n}
\end{pNiceMatrix} ,
\end{align*}
where $n := l+m$ and $\alpha_0 , \ldots , \alpha_n \in \mathbb{F}_q$. This is similar to a Toeplitz matrix, but in this case all entries on a given skew-diagonal are the same. We can associate the sequence
\begin{align*}
\mathbfe{\alpha}
:= (\alpha_0 , \alpha_1 , \ldots , \alpha_n)
\end{align*}
to the matrix above, and denote the matrix by $H_{l+1 , m+1} (\mathbfe{\alpha})$. We can see that there are $n+1$ possibly values for $l,m \geq 0$ such that $l+m=n$, and so there are $n+1$ possible matrices we can associate with $\mathbfe{\alpha}$ in this way. As we will briefly discuss later, when studying the kernel of Hankel matrices, it is natural to group the matrices that have the same associated sequence and consider how the kernel differs between $H_{l+1 , m+1} (\mathbfe{\alpha})$ and $H_{(l+1)-1 , (m+1)+1} (\mathbfe{\alpha}) = H_{l , m+2} (\mathbfe{\alpha})$. This then allows us to understand the kernel structure of Hankel matrices generally. \\

Hankel matrices are defined by the characteristic that $\alpha_{i,j} = \alpha_{k,l}$ whenever $i+j=k+l$, where $\alpha_{i,j}$ is the $(i,j)$-th entry of the matrix. The can be generalised to higher dimensional arrays (tensors). For example, if we have a $k$ dimensional tensor with $(i_1 , \ldots , i_k)$-th entry denoted by $\alpha_{i_1 , \ldots , i_k}$, then we say it is Hankel if $\alpha_{i_1 , \ldots , i_k} = \alpha_{j_1 , \ldots , j_k}$ whenever $i_1 + \ldots i_k = j_1 + \ldots + j_k$. Note that one-dimensional arrays (that is, vectors) are always Hankel. \\

At this point, one may ask what the connection between Hankel matrices and function fields is. To answer this, let us first make a definition. Suppose we have a polynomial $B=b_0 + b_1 T + \ldots + b_n T^n \in \mathbb{F}_q [T]$ with $b_n \neq 0$. Then, for $k \geq n$, we define the vector $[B]_k \in \mathbb{F}_q^{k+1}$ by
\begin{align*}
[B]_k
:= \begin{pmatrix}
b_0 \\ b_1 \\ \vdots \\ b_n \\ 0 \\ \vdots \\ 0
\end{pmatrix} ,
\end{align*}
where there are $k-n$ zeros at the end. Typically, we will have $k=n$ or $k=n+1$ (and so either no zeros or one zero at the end), but not always. Also, if $B=0$ then $[B]_k$ is just defined to be the $(k+1) \times 1$ zero vector. In essence, $[B]_k$ is just a representation of $B$ as a vector, but we are also keeping track of the number of zeros at the end. Now, suppose that $B \in \mathcal{A}_n$. We clearly have
\begin{align} \label{statement, B as Hankel vector product}
\begin{matrix}
B = \begin{pmatrix} b_0 & b_1 & \cdots & b_n \end{pmatrix} \\ \\ \\ \\
\end{matrix}
	\begin{pmatrix} 1 \\ T \\ \vdots \\ T^n \end{pmatrix} .
\end{align}
Now suppose we have $E = e_0 + e_1 T + \ldots + e_l T^l \in \mathcal{A}_l$ and $F = f_0 + f_1 T + \ldots + f_m T^m \in \mathcal{A}_m$ with $l+m=n$. With a few calculations, we can see that
\begin{align} \label{statement, EF as Hankel vector product}
\begin{matrix}
EF = \begin{pmatrix} e_0 & e_1 & \cdots & e_l \end{pmatrix} \\ \\ \\ \\ \\ \\
\end{matrix}
\begin{pmatrix}
1 & T & T^2 & \cdots & \cdots & T^m \\
T & T^2 &  &  &  & \vdots \\
T^2 &  &  &  &  & \vdots \\
\vdots &  &  &  &  & \vdots \\
\vdots &  &  &  &  & T^{n-1} \\
T^l & \cdots & \cdots & \cdots & T^{n-1} & T^{n} 
\end{pmatrix}
\begin{matrix}
\begin{pmatrix}
f_0 \\ f_1 \\ \vdots \\ f_m
\end{pmatrix}
\\ \\ \\
\end{matrix} .
\end{align}
What we see here is ``Hankel structure'' appearing in polynomial multiplication. That is, in (\ref{statement, B as Hankel vector product}), we have a product of one polynomial (just $B$), and this involves the Hankel vector $(1 , T , \cdots , T^n)^T$. In (\ref{statement, EF as Hankel vector product}), we have the product of two polynomials $E,F$, and this involves the $(l+1) + (m+1)$ Hankel matrix above. If we were to consider a product of three polynomials, we could obtain a similar representation as above but with a three-dimensional Hankel tensor. \\

This gives an indication of how ``Hankel structure'' relates to polynomial multiplication. However, to see how Hankel matrices appear, that are specifically over $\mathbb{F}_q$, we will first need to make the following two definitions.

\begin{definition}[Additive Character]
A function $\psi : \mathbb{F}_q \longrightarrow \mathbb{C}^*$ is an additive character on $\mathbb{F}_q$ if $\psi (a+b) = \psi (a) \psi (b)$ for all $a,b \in \mathbb{F}_q$. We say it is non-trivial if there exists some $c \in \mathbb{F}_q^*$ such that $\psi (c) \neq 1$. In the remainder of the paper, we will take $\psi$ to be a fixed non-trivial additive character on $\mathbb{F}_q$, unless otherwise stated.
\end{definition}

Non-trivial additive characters satisfy the orthogonality relation
\begin{align} \label{statement, additive character FF orthog relation}
\frac{1}{q} \sum_{a \in \mathbb{F}_q} \psi (ab)
= \begin{cases}
1 &\text{ if $b=0$,} \\
0 &\text{ if $b \in \mathbb{F}_q^*$.}
\end{cases}
\end{align}
They can be viewed as analogous to the classical exponential function, and in a similar way that the exponential function is used in Fourier expansions, we can define Fourier expansions over $\mathcal{A}$ using additive characters. In this paper we will only require the Fourier expansion of the indicator function, for its use in counting solutions to Diophantine equations as described in Section \ref{section, introduction}.

\begin{definition}[Fourier Expansion of the Indicator Function] \label{definition, Fourier exp indicator function}
We define $\mathbbm{1} : \mathcal{A} \longrightarrow \mathbb{C}$ by
\begin{align*}
\text{$\mathbbm{1} (A)$}
= \begin{cases}
1 &\text{ if $A=0$,} \\
0 &\text{ if $A \in \mathcal{A} \backslash \{ 0 \}$;}
\end{cases}
\end{align*}
and, for any $n \geq 0$ and $A \in \mathcal{A}_{\leq n}$, we have the Fourier expansion
\begin{align*}
\mathbbm{1} (A) 
= \frac{1}{q^{n+1}} \sum_{\mathbfe{\alpha} \in \mathbb{F}_q^{n+1}} \psi (\mathbfe{\alpha} \cdot [A]_n) .
\end{align*}
\end{definition}

In some cases, we will have products $EF$ instead of $A$ above, and this is how Hankel matrices over $\mathbb{F}_q$ appear. Indeed, we have
\begin{align} \label{statement, how Hankel matrices appear from products}
\begin{matrix}
\mathbfe{\alpha} \cdot [EF]_n = \begin{pmatrix} e_0 & e_1 & \cdots & e_l \end{pmatrix} \\ \\ \\ \\ \\ \\
\end{matrix}
\begin{pmatrix}
1 & \alpha_1 & \alpha_2 & \cdots & \cdots & \alpha_m \\
\alpha_1 & \alpha_2 &  &  &  & \vdots \\
\alpha_2 &  &  &  &  & \vdots \\
\vdots &  &  &  &  & \vdots \\
\vdots &  &  &  &  & \alpha_{n-1} \\
\alpha_l & \cdots & \cdots & \cdots & \alpha_{n-1} & \alpha_{n} 
\end{pmatrix}
\begin{matrix}
\begin{pmatrix}
f_0 \\ f_1 \\ \vdots \\ f_m
\end{pmatrix}
\\ \\ \\
\end{matrix} .
\end{align}

Now that we have seen how Hankel matrices relate to function fields, let us recall some definitions and results that we have established in \cite{Yiasemides2021_VariCorrDivFuncFpTHankelMatr_ArXiv_v2, Yiasemides2022_VariSumTwoSquareOverIntervalFqT_I_Arxiv} that we will require later. \\

In the remainder of the paper, for an integer $n \geq 0$, we define $n_1 := \lfloor \frac{n+2}{2} \rfloor$ and $n_2 := \lfloor \frac{n+3}{2} \rfloor$. In particular, if we have $\mathbfe{\alpha} = (\alpha_0 , \ldots , \alpha_n)$, then $H_{n_1 , n_2} (\mathbfe{\alpha})$ is the square/almost square matrix associated to $\mathbfe{\alpha}$, depending on whether $n$ is even/odd. \\

Now suppose we have $l,m$ such that $l+m =: n' \leq n$. If we have the equality $n'=n$, then $H_{l+1,m+1} (\mathbfe{\alpha})$ is well-defined based on our definitions above. If $n' < n$, then we define
\begin{align*}
H_{l+1,m+1} (\mathbfe{\alpha})
:= H_{l+1,m+1} (\mathbfe{\alpha}')
\end{align*}
where $\mathbfe{\alpha}' := (\alpha_0 , \ldots , \alpha_{n'})$ is a truncation of $\mathbfe{\alpha}$ and the right side is well defined based on our previously established definitions. In particular, the matrices $H_{1,1} (\mathbfe{\alpha}) , \ldots , H_{n_1 , n_1} (\mathbfe{\alpha})$ are top-left submatrices of $H_{n_1 , n_2} (\mathbfe{\alpha})$. \\

We now define the \rhopi-characteristic of a Hankel matrix, which is given in \cite{HeinigRost1984_AlgMethToeplitzMatrOperat}. There, their definition is different but ultimately equivalent to ours. Ours is based more on the results found in \cite{Garcia-ArmasGhorpadeRam2011_RelativePrimePolyNonsingHankelMatrFinField}.

\begin{definition}[The \rhopi-characteristic.]
Let $\mathbfe{\alpha} \in \mathbb{F}_q^{n+1}$. 
\begin{itemize}
\item We define $r (\mathbfe{\alpha})$ to be the rank of $H_{n_1 , n_2} (\mathbfe{\alpha})$. \\

\item We define $\rho (\mathbfe{\alpha})$ to be the largest integer $\rho_1$ in $\{ 1 , \ldots , n_1 \}$ such that $H_{\rho_1 , \rho_1} (\mathbfe{\alpha})$ is invertible, and if no such $\rho_1$ exists then we take $\rho (\mathbfe{\alpha}) = 0$. \\

\item We also define $\pi (\mathbfe{\alpha}) := r (\mathbfe{\alpha}) - \rho (\mathbfe{\alpha})$. 
\end{itemize}

We say the \rhopi-characteristic of $\mathbfe{\alpha}$ is $(\pi_1 , \rho_1)$ if $\rho (\mathbfe{\alpha}) = \rho_1$ and $\pi (\mathbfe{\alpha}) = \pi_1$. 
\end{definition}

The \rhopi-characteristic is a natural property to study, as will become clear later. We will also define the strict \rhopi-characteristic, which is very similar.

\begin{definition}[The strict \rhopi-characteristic.]
Let $\mathbfe{\alpha} \in \mathbb{F}_q^{n+1}$. 
\begin{itemize}
\item We define $r (\mathbfe{\alpha})$ to be the rank of $H_{n_1 , n_2} (\mathbfe{\alpha})$. \\

\item We define $\strictrho (\mathbfe{\alpha})$ to be the largest integer $\rho_1$ in $\{ 1 , \ldots , n_2 -1 \}$ such that $H_{\rho_1 , \rho_1} (\mathbfe{\alpha})$ is invertible, and if no such $\rho_1$ exists then we take $\rho (\mathbfe{\alpha}) = 0$. \\

\item We also define $\strictpi (\mathbfe{\alpha}) := r (\mathbfe{\alpha}) - \strictrho (\mathbfe{\alpha})$. 
\end{itemize}

We say the strict \rhopi-characteristic of $\mathbfe{\alpha}$ is $(\pi_1 , \rho_1)$ if $\strictrho (\mathbfe{\alpha}) = \rho_1$ and $\strictpi (\mathbfe{\alpha}) = \pi_1$. 
\end{definition}

The only difference between the \rhopi-characteristic and the strict \rhopi-characteristic is in the definition of $\strictrho (\mathbfe{\alpha})$ (although, it does indirectly affect the definition of $\strictpi (\mathbfe{\alpha})$ as well). We have that $\rho (\mathbfe{\alpha})$ can take values up to $n_1$, while $\strictrho (\mathbfe{\alpha})$ can take values up to $n_2 -1$. Note that if $n$ is odd then $n_1 = n_2 -1$, and so the difference exists only when $n$ is even. Even then, the value of $\strictrho (\mathbfe{\alpha})$ differs from $\rho (\mathbfe{\alpha})$ only when $H_{n_1 , n_1} (\mathbfe{\alpha})$ has full rank (and thus is invertible). Studying Hankel matrices often involves viewing them as extensions of their submatrices, and it turns out that these full-rank square Hankel matrices mentioned above are ``threshold'' cases in the extensions, and this leads to two possible natural concepts for the \rhopi-characteristic.

It will be helpful at times to extend our definitions to the Hankel matrices associated to $\mathbfe{\alpha}$: For $l,m \geq 0$ with $l+m=n$, we define
\begin{align*}
&r \big( H_{l+1 , m+1} (\mathbfe{\alpha}) \big) := r (\mathbfe{\alpha}) , \\
&\rho \big( H_{l+1 , m+1} (\mathbfe{\alpha}) \big) := \rho (\mathbfe{\alpha}) , 
	&&\strictrho \big( H_{l+1 , m+1} (\mathbfe{\alpha}) \big) := \strictrho (\mathbfe{\alpha}) , \\
&\pi \big( H_{l+1 , m+1} (\mathbfe{\alpha}) \big) := \pi (\mathbfe{\alpha}) , 
	&&\strictpi \big( H_{l+1 , m+1} (\mathbfe{\alpha}) \big) := \strictpi (\mathbfe{\alpha}) .
\end{align*}
Note that $r \big( H_{l+1 , m+1} (\mathbfe{\alpha}) \big)$ is not necessarily equal to the rank of $H_{l+1 , m+1} (\mathbfe{\alpha})$. \\

We can now define the following sets in $\mathbb{F}_q^{n+1}$.
\begin{definition} \label{definition, notation for sets in terms of n,h,rho,pi,r}
Let $n,h \geq 0$. We define 
\begin{align*}
\mathcal{L}_n^h := &\{ \mathbfe{\alpha} \in \mathbb{F}_q^{n+1} : \text{ $\alpha_i = 0$ for $0 \leq i \leq h-1$} \} , \\
\mathcal{L}_n (r_1) := &\{ \mathbfe{\alpha} \in \mathbb{F}_q^{n+1} : r (\mathbfe{\alpha}) = r_1 \} , \\
\mathcal{L}_n^h (r_1) := &\{ \mathbfe{\alpha} \in \mathbb{F}_q^{n+1} : r (\mathbfe{\alpha}) = r_1 , \text{ $\alpha_i = 0$ for $0 \leq i \leq h-1$} \} , \\
\vspace{0.5em} \\
\mathcal{L}_n (r_1 , \rho_1 , \pi_1) := &\{ \mathbfe{\alpha} \in \mathbb{F}_q^{n+1} : r (\mathbfe{\alpha}) = r_1 , \rho (\mathbfe{\alpha}) = \rho_1 , \pi (\mathbfe{\alpha}) = \pi_1 \} , \\
\mathcal{L}_n^h (r_1 , \rho_1 , \pi_1) := &\{ \mathbfe{\alpha} \in \mathbb{F}_q^{n+1} : r (\mathbfe{\alpha}) = r_1 , \rho (\mathbfe{\alpha}) = \rho_1 , \pi (\mathbfe{\alpha}) = \pi_1 , \text{ $\alpha_i = 0$ for $0 \leq i \leq h-1$} \} , \\
\vspace{0.5em} \\
\strictmathcal{L}_n (r_1 , \rho_1 , \pi_1) := &\{ \mathbfe{\alpha} \in \mathbb{F}_q^{n+1} : r (\mathbfe{\alpha}) = r_1 , \strictrho (\mathbfe{\alpha}) = \rho_1 , \strictpi (\mathbfe{\alpha}) = \pi_1 \} , \\
\strictmathcal{L}_n^h (r_1 , \rho_1 , \pi_1) := &\{ \mathbfe{\alpha} \in \mathbb{F}_q^{n+1} : r (\mathbfe{\alpha}) = r_1 , \strictrho (\mathbfe{\alpha}) = \rho_1 , \strictpi (\mathbfe{\alpha}) = \pi_1 , \text{ $\alpha_i = 0$ for $0 \leq i \leq h-1$} \} .
\end{align*}
\end{definition}

It is helpful to keep in mind that, by definition, we always have $r_1 = \rho_1 + \pi_1$ above. Note also that for $n$ even, the set $\strictmathcal{L}_n (n_1 , n_1 , 0)$ is empty, because we always have $\strictrho (\mathbfe{\alpha}) \leq n_2 -1 = n_1 -1$. \\

Now, let us make some remarks that we will refer to later, but first we will need to define lower skew-triangular Hankel matrices.

\begin{definition}[Lower Skew-triangular Hankel Matrices]
Lower skew-triangular Hankel matrices are defined to be exactly the matrices $H_{n_1 , n_2} (\mathbfe{\alpha})$ for which $\mathbfe{\alpha} \in \mathcal{L}_n^{n_1}$ (for any $n \geq 0$). In particular, the first $n_1$ entries of $\mathbfe{\alpha}$ are zero, and so $H_{n_1 , n_2} (\mathbfe{\alpha})$ is of the form 
\begin{align*}
\begin{pNiceMatrix}
0 & \Cdots &  &  &  &  & 0 \\
\Vdots &  &  &  &  &  & \Vdots \\
0 & \Cdots &  &  &  &  & 0 \\
0 & \Cdots &  &  &  & 0 & \alpha_{n_1 +1} \\
0 & \Cdots &  &  & 0 & \alpha_{n_1 +1} & \alpha_{n_1 +2} \\
\Vdots &  &  & \Iddots & \Iddots & \Iddots & \Vdots \\
0 & \Cdots & 0 & \alpha_{n_1 +1} & \alpha_{n_1 +2} & \Cdots & \alpha_n \\
\end{pNiceMatrix} .
\end{align*}
Recall that if $\mathbfe{\alpha} \in \mathcal{L}_n^{n_1}$ then \emph{at least} the first $n_1$ entries of $\mathbfe{\alpha}$ that are zero, and there may be more. Thus $\alpha_{n_1 +1}$ is not necessarily non-zero above. It is also worth noting that if $n$ is even, then there must be at least one row of zeros at the top and at least one column of zeros on the left; while if $n$ is odd then there is at least one column of zeros on the left, but it is possible that there are no rows of zeros at the top.
\end{definition}

\begin{remark} \label{remark, rho_1 values dependent on h}
Suppose that $\mathbfe{\alpha} \in \mathcal{L}_n^h$. Then, we must have that $\rho (\mathbfe{\alpha}) = 0$ or $\rho (\mathbfe{\alpha}) \geq h+1$, and $\strictrho (\mathbfe{\alpha}) = 0$ or $\strictrho (\mathbfe{\alpha}) \geq h+1$. This is not surprising. Indeed, if the first $h$ entries of $\mathbfe{\alpha}$ are zero, then $H_{l,l} (\mathbfe{\alpha})$ is strictly lower skew-triangular for $l \leq h$, and thus it is not invertible. \\

Note this implies that if $h \geq n_1$, then $\rho (\mathbfe{\alpha}) = 0$ (and $\strictrho (\mathbfe{\alpha}) = 0$). Furthermore, $H_{n_1 , n_2} (\mathbfe{\alpha})$ is lower skew-triangular. In particular, if we let $z \geq h$ be such that there are exactly $z$ zeros at the start of $\mathbfe{\alpha}$ (Recall that if $\mathbfe{\alpha} \in \mathcal{L}_n^h$ then $\mathbfe{\alpha}$ has \emph{at least} $h$ zeros at the start, but it could have more), then $H_{n_1 , n_2} (\mathbfe{\alpha})$ is of the form
\begin{align*}
\begin{pNiceMatrix}
0 & \Cdots &  &  &  &  & 0 \\
\Vdots &  &  &  &  &  & \Vdots \\
0 & \Cdots &  &  &  &  & 0 \\
0 & \Cdots &  &  &  & 0 & \alpha_{z+1} \\
0 & \Cdots &  &  & 0 & \alpha_{z+1} & \alpha_{z+2} \\
\Vdots &  &  & \Iddots & \Iddots & \Iddots & \Vdots \\
0 & \Cdots & 0 & \alpha_{z+1} & \alpha_{z+2} & \Cdots & \alpha_n \\
\end{pNiceMatrix} 
\end{align*}
with $\alpha_{z+1} \neq 0$. In particular, we see that the rank of $H_{n_1 , n_2} (\mathbfe{\alpha})$ is $n-z$. \\

We also note a converse result. If $\mathbfe{\alpha} \in \mathcal{L}_n$ and $\rho (\mathbfe{\alpha}) = 0$ (or $\strictrho (\mathbfe{\alpha}) = 0$), then $H_{n_1 , n_2} (\mathbfe{\alpha})$ is lower skew-triangular. Indeed, by definition of $\rho$ (and $\strictrho$), we have that $H_{1,1} (\mathbfe{\alpha}) , \ldots , H_{n_1,n_1} (\mathbfe{\alpha})$ are all not invertible, and then an inductive argument tells us that the first $n_1$ entries of $\mathbfe{\alpha}$ are zero.
\end{remark}

Theorem 2.3.1 of \cite{Yiasemides2021_VariCorrDivFuncFpTHankelMatr_ArXiv_v2} gives us the number of elements in the sets we have defined above:

\begin{lemma} \label{lemma, number of sequences of given rhopi form}
Let $n \geq 0$ and $0 \leq h \leq n+1$, and consider $\mathscr{L}_{n}^{h} (r , \rho_1 , \pi_1 )$. Let us also define $\EvenInd (n)$ to be $1$ if $n$ is even, and $0$ if $n$ is odd. 

\begin{itemize}
\item Suppose $0 \leq r \leq \min \{ n_1 - \EvenInd (n) , n-h+1 \}$. Then,
\begin{align*}
\lvert \mathscr{L}_{n}^{h} (r , 0 , r ) \rvert
= \begin{cases}
1 &\text{ if $r=0$,} \\
(q-1) q^{r-1} &\text{ if $r > 0$.} 
\end{cases}  \\
\end{align*}

\item Suppose that $h+1 \leq \rho_1 \leq n_1 - 1$ and $0 \leq \pi_1 \leq n_1 - \rho_1 - \EvenInd (n)$. Then, 
\begin{align*}
\lvert \mathscr{L}_{n}^{h} (\rho_1 + \pi_1 , \rho_1 , \pi_1 ) \rvert
= \begin{cases}
(q-1) q^{2 \rho_1 - h - 1} &\text{ if $\pi_1 = 0$,} \\
(q-1)^2 q^{2 \rho_1 + \pi_1 - h -2}  &\text{ if $\pi_1 > 0$.} 
\end{cases} \\
\end{align*}

\item Suppose $h+1 \leq n_1$. We have
\begin{align*}
\lvert \mathscr{L}_{n}^{h} (n_1 , n_1 , 0 ) \rvert
= (q-1) q^{n-h} . \\
\end{align*}

\item Consider $\mathscr{L}_n^h (r )$. We have
\begin{align*}
\lvert \mathscr{L}_n^h (r ) \rvert
= \begin{cases}
1 &\text{ if $r=0$,} \\
(q-1) q^{r-1} &\text{ if $1 \leq r \leq \min \{ h , n-h+1 \} $,} \\
(q^2 -1) q^{2r-h-2} &\text{ if $h+1 \leq r \leq n_1 - 1$,} \\ 
q^{n-h+1} - q^{2n_1 - h -2} &\text{ if $r = n_1$ (which is only possible if $h+1 \leq n_1$).}
\end{cases}
\end{align*}
\end{itemize}
\end{lemma}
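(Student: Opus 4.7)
The plan is to handle the first three bullets separately by extracting the structure of $\mathbfe{\alpha}$ from its prescribed \rhopi-characteristic, then assemble the last bullet by summation over $(\rho_1, \pi_1)$ with $\rho_1 + \pi_1 = r$.

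For the $\rho_1 = 0$ case, I would invoke the converse part of the remark preceding the lemma: $\rho(\mathbfe{\alpha}) = 0$ is equivalent to $H_{n_1, n_2}(\mathbfe{\alpha})$ being lower skew-triangular. Letting $z \geq h$ denote the actual number of leading zeros of $\mathbfe{\alpha}$, the rank equals $n - z$, so the condition $r(\mathbfe{\alpha}) = r$ fixes $z = n - r$. The count is then immediate: one sequence when $r = 0$, and $(q-1) q^{r-1}$ when $r \geq 1$ (choose $\alpha_{n-r+1} \neq 0$, leave $\alpha_{n-r+2}, \ldots, \alpha_n$ free). The bound $r \leq n_1 - \EvenInd(n)$ is the maximal rank of a lower skew-triangular Hankel matrix of shape $n_1 \times n_2$ (when $n$ is even the top-left corner is forced to zero, costing one unit of rank). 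For the threshold case $\rho_1 = n_1$, $\pi_1 = 0$, invertibility of $H_{n_1, n_1}(\mathbfe{\alpha})$ constrains the first $2n_1 - 1$ entries; the known enumeration of invertible square Hankel matrices from \cite{Garcia-ArmasGhorpadeRam2011_RelativePrimePolyNonsingHankelMatrFinField} combined with the free choice of tail entries $\alpha_{2n_1}, \ldots, \alpha_n$ recovers $(q-1) q^{n - h}$.

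The hard case is the middle one, $h + 1 \leq \rho_1 \leq n_1 - 1$. Here $\rho(\mathbfe{\alpha}) = \rho_1$ and $r(\mathbfe{\alpha}) = \rho_1 + \pi_1$ jointly assert that $H_{\rho_1,\rho_1}(\mathbfe{\alpha})$ is invertible, that $H_{\rho_1+1, \rho_1+1}(\mathbfe{\alpha})$ is singular, and that the sequence obeys the linear recurrence induced by the invertible block until a possible ``breakout'' $\pi_1$ steps later. Using the Hankel extension formalism of \cite{HeinigRost1984_AlgMethToeplitzMatrOperat, Garcia-ArmasGhorpadeRam2011_RelativePrimePolyNonsingHankelMatrFinField}, I would (i) enumerate invertible $\rho_1 \times \rho_1$ Hankel matrices whose first $h$ entries vanish, (ii) fix the unique value of $\alpha_{2\rho_1 - 1}$ that renders $H_{\rho_1 + 1, \rho_1 + 1}(\mathbfe{\alpha})$ singular, and (iii) split into the $\pi_1 = 0$ case (each later entry is forced by the recurrence) versus the $\pi_1 > 0$ case (recurrence for $\pi_1 - 1$ steps, then one breakout entry with $q - 1$ choices, then free entries). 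The product of these counts yields $(q-1) q^{2 \rho_1 - h - 1}$ and $(q-1)^2 q^{2\rho_1 + \pi_1 - h - 2}$ respectively.

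Finally, $\lvert \mathscr{L}_n^h(r) \rvert$ is obtained by summing the previous counts over admissible $(\rho_1, \pi_1)$ with $\rho_1 + \pi_1 = r$. The piecewise form reflects exactly which $\rho_1$ are compatible: for $r \leq h$ only $\rho_1 = 0$ survives, giving $(q-1) q^{r-1}$; for $h + 1 \leq r \leq n_1 - 1$ the contributions from $\rho_1 \geq h+1$ form a geometric sum in $\pi_1$ that collapses to $(q^2 - 1) q^{2r - h - 2}$; and for $r = n_1$ the threshold contribution $(q-1) q^{n-h}$ must be combined with the residual terms from smaller $\rho_1$ to give $q^{n - h + 1} - q^{2 n_1 - h - 2}$. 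The principal obstacle is the middle case: tracking the ``compliant versus breakout'' bifurcation cleanly requires the Hankel characteristic polynomial machinery, whereas the geometric summation assembling $\lvert \mathscr{L}_n^h(r) \rvert$ is routine once the per-characteristic counts are in hand.
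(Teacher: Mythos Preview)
The paper does not give its own proof of this lemma at all; it is quoted verbatim as Theorem 2.3.1 of the author's earlier work \cite{Yiasemides2021_VariCorrDivFuncFpTHankelMatr_ArXiv_v2}. So there is nothing to compare your argument against here, and your sketch is in fact more detailed than what the present paper provides.

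Your overall strategy is sound and is almost certainly the same one used in the cited paper: reduce everything to the structure of the \rhopi-form, count invertible square Hankel top-left blocks via the enumeration of \cite{Garcia-ArmasGhorpadeRam2011_RelativePrimePolyNonsingHankelMatrFinField}, and then track the constraints on later entries imposed by the recurrence versus the ``breakout''. The assembly of $\lvert \mathscr{L}_n^h(r) \rvert$ by summing over $(\rho_1,\pi_1)$ is exactly right.

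One small correction in your middle-bullet analysis. In step (ii) you write ``fix the unique value of $\alpha_{2\rho_1 - 1}$ that renders $H_{\rho_1+1,\rho_1+1}(\mathbfe{\alpha})$ singular''. This is off by one. The matrix $H_{\rho_1,\rho_1}(\mathbfe{\alpha})$ uses only $\alpha_0,\ldots,\alpha_{2\rho_1-2}$, so $\alpha_{2\rho_1-1}$ is in fact a \emph{free} parameter (it enters $H_{\rho_1+1,\rho_1+1}$ but not in the position that controls singularity). What is determined is $\alpha_{2\rho_1}$: given $\alpha_0,\ldots,\alpha_{2\rho_1-1}$, the singularity of $H_{\rho_1+1,\rho_1+1}$ fixes $\alpha_{2\rho_1}$ uniquely (equivalently, $\beta_{2\rho_1}=0$ in the reduced sequence). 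This free factor of $q$ from $\alpha_{2\rho_1-1}$ is precisely what turns the count of invertible $\rho_1\times\rho_1$ Hankel blocks with $h$ leading zeros, namely $(q-1)q^{2\rho_1-h-2}$, into the stated $(q-1)q^{2\rho_1-h-1}$ for $\pi_1=0$. If you follow your step (ii) literally you will be short by a factor of $q$. With this adjustment, your counts for both $\pi_1=0$ and $\pi_1>0$ go through exactly as you describe.
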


Now that we have defined the \rhopi-characteristic of a Hankel matrix, we can introduce the \rhopi-form of such matrices. This involves the application of certain row operations, and the resulting matrix is useful in understanding the kernel structure. We will only give a brief summary of this so that it is clear to the reader what the importance of the \rhopi-characteristic is in relation to the kernel, before explicitly giving results on the kernel structure. For more details, we refer the reader to Section 2 of \cite{Yiasemides2021_VariCorrDivFuncFpTHankelMatr_ArXiv_v2}. \\

Suppose we have $\mathbfe{\alpha} \in \mathcal{L}_n (r_1 , \rho_1 , \pi_1)$ with $1 \leq \rho_1 \leq n_1 -1$, and consider the matrix $H_{n_1 , n_2} (\mathbfe{\alpha})$. By definition of $\rho (\mathbfe{\alpha})$ we can see that the top-left submatrix $H_{\rho_1 , \rho_1} (\mathbfe{\alpha})$ is invertible. In particular, there is a solution $\mathbf{x} = (x_0 , \ldots , x_{\rho_1 -1}) \in \mathbb{F}_q^{\rho_1}$ to
\begin{align} \label{statement, H_(rho_1 , rho_1) (alpha) x = right vector}
H_{\rho_1 , \rho_1} (\mathbfe{\alpha}) \mathbf{x}
= \begin{pmatrix} \alpha_{\rho_1} \\ \alpha_{\rho_1 +1} \\ \vdots \\ \alpha_{2\rho_1 -1} \end{pmatrix} .
\end{align}
The vector on the right side is simply the column vector directly to the right of the submatrix $H_{\rho_1 , \rho_1} (\mathbfe{\alpha})$ in $H_{n_1 , n_2} (\mathbfe{\alpha})$. By symmetry, it is also the transpose of the row directly below $H_{\rho_1 , \rho_1} (\mathbfe{\alpha})$. Now let $R_i$ be the $i$-th row of $H_{n_1 , n_2} (\mathbfe{\alpha})$ and apply the row operations
\begin{align*} 
R_i
\longrightarrow R_i - (x_0 , \ldots , x_{\rho_1 -1}) 
	\begin{pmatrix} R_{i-\rho_1} \\ \vdots \\ R_{i-1} \end{pmatrix}
\end{align*}
for $i = n_1 , n_1 -1 , \ldots , \rho_1 +1$ in that order. The resulting matrix is of the form

\begin{align} \label{statement, rhopi-form of H_(n_1 , n_2) (alpha)}
\left(
\begin{array}{c|c}
H_{\rho_1 , \rho_1} (\mathbfe{\alpha}) & \mathbfe{*} \\
\hline
\mathbf{0} & 
\begin{NiceMatrix}
0 & \Cdots &  &  &  &  & 0 \\
\Vdots &  &  &  &  &  & \Vdots \\
0 & \Cdots &  &  &  &  & 0 \\
0 & \Cdots &  &  &  & 0 & \lambda \\
0 & \Cdots &  &  & 0 & \lambda & \beta_2 \\
\Vdots &  &  & \Iddots & \Iddots & \Iddots & \Vdots \\
0 & \Cdots & 0 & \lambda & \beta_2 & \Cdots & \beta_{\pi_1} \\
\end{NiceMatrix}
\end{array}
\right) .
\end{align}
The top-right quadrant $\mathbfe{*}$ is the same as the corresponding submatrix in $H_{n_1 , n_2} (\mathbfe{\alpha})$, but we do not need to express it explicitly; $\lambda$ is some element in $\mathbb{F}_q^*$; and $\beta_2 , \ldots , \beta_{\pi_1} \in \mathbb{F}_q$. The full and rigorous explanation of this is given in Section 2.2 of \cite{Yiasemides2021_VariCorrDivFuncFpTHankelMatr_ArXiv_v2}, but we can give some intuitive indications as to its validity. First, note that the entries directly below $H_{\rho_1 , \rho_1} (\mathbfe{\alpha})$ must be zero by definition of $\mathbf{x}$ and the row operations we applied. Second, the row operations we applied certainly changed the bottom two quadrants, but they preserved the property that the matrix formed by these bottom two quadrants is Hankel. Thus, all that remains to be shown is that the first non-zero skew-diagonal of the bottom-right quadrant is the $\pi_1$-th skew diagonal from the end. This can be proved inductively by considering the submatrices $H_{l,l} (\mathbfe{\alpha})$ for $l = \rho_1 +1 , \ldots n_1$ and using the fact that all of these are not invertible (by definition of $\rho (\mathbfe{\alpha})$). This will justify the skew-diagonals that are zero. The fact that $\lambda \neq 0$ can be seen by the fact that we require the rank of $H_{n_1 , n_2} (\mathbfe{\alpha})$ to be $r_1 = \rho_1 + \pi_1$. \\

The matrix (\ref{statement, rhopi-form of H_(n_1 , n_2) (alpha)}) is the \rhopi-form of $H_{n_1 , n_2} (\mathbfe{\alpha})$. Above, we only considered the case where $1 \leq \rho_1 \leq n_1 -1$, but (\ref{statement, rhopi-form of H_(n_1 , n_2) (alpha)}) can be used to extend the definition naturally to the cases $\rho_1 = 0$ and $\rho_1 = n_1$. \\

When $\rho_1 = 0$, $H_{0,0} (\mathbfe{\alpha})$ is interpreted as being an empty matrix, as are the top-right and bottom-left quadrants of (\ref{statement, rhopi-form of H_(n_1 , n_2) (alpha)}) since they have zero rows and columns, respectively. In particular, we are left with the lower skew-triangular Hankel matrix from the bottom-right. One may ask what row operations we apply to obtain this from the original matrix. The answer is none (which can be interpreted as applying the trivial row-operations). Indeed, Remark \ref{remark, rho_1 values dependent on h} tells us that when $\rho_1 = 0$, the matrix $H_{n_1 , n_2} (\mathbfe{\alpha})$ is lower skew-triangular and of the same form as the bottom-right quadrant in (\ref{statement, rhopi-form of H_(n_1 , n_2) (alpha)}). Therefore, the \rhopi-form of $H_{n_1 , n_2} (\mathbfe{\alpha})$ is just itself. \\

When $\rho_1 = n_1$, the top-left quadrant of (\ref{statement, rhopi-form of H_(n_1 , n_2) (alpha)}) has as many rows as the entire matrix, and so the bottom two quadrants have zero rows and are therefore empty matrices. Hence, (\ref{statement, rhopi-form of H_(n_1 , n_2) (alpha)}) is just the original matrix. That is, the \rhopi-form of $H_{n_1 , n_2} (\mathbfe{\alpha})$ is just itself. \\

So far, we have defined the \rhopi-form only for $H_{n_1 , n_2} (\mathbfe{\alpha})$. A similar definition holds for $H_{l+1 , m+1} (\mathbfe{\alpha})$ when $l+m=n$, but we are particularly interested in the case where both $l+1 , m+1 \geq r_1$ hold. In this case, again we can apply the row operations
\begin{align*} 
R_i
\longrightarrow R_i - (x_0 , \ldots , x_{\rho_1 -1}) 
	\begin{pmatrix} R_{i-\rho_1} \\ \vdots \\ R_{i-1} \end{pmatrix} ,
\end{align*}
but for $i = l+1 , l , \ldots , \rho_1 +1$ this time. We will obtain a matrix of the form (\ref{statement, rhopi-form of H_(n_1 , n_2) (alpha)}) again. Of course, the number of rows and columns are different, but the most significant difference this causes is the number of zero-columns/zero-rows that appear on the left and top of the bottom-right quadrant; and in the boundary cases $m+1 =r_1$ or $l+1 =r_1$ we will have no zero-columns or zero-rows, respectively. \\

Finally, we can define the strict \rhopi-form, which, as the name suggests, is based on the strict \rhopi-characteristic. It is essentially the same approach used to obtain (\ref{statement, rhopi-form of H_(n_1 , n_2) (alpha)}), but we take $\rho_1 = \strictrho (\mathbfe{\alpha})$ and $\pi_1 = \strictpi (\mathbfe{\alpha})$, instead of $\rho_1 = \rho (\mathbfe{\alpha})$ and $\pi_1 = \pi (\mathbfe{\alpha})$. The only time where the strict \rhopi-form differs from the standard \rhopi-form is when $n$ is even and $\mathbfe{\alpha} \in \mathcal{L}_n (n_1)$. In this case, $H_{n_1 , n_2} (\mathbfe{\alpha})$ has full rank and so the standard \rhopi-form is simply the matrix itself. Whereas the strict \rhopi-form is of the form
\begin{align*}
\left(
\begin{array}{c|c}
H_{\rho_1 , \rho_1} (\mathbfe{\alpha}) & \mathbfe{*} \\
\hline
\mathbf{0} & 
\begin{NiceMatrix}
0 & \Cdots &  & 0 & \lambda \\
\Vdots &  & \Iddots & \lambda & \beta_2 \\
 & \Iddots & \Iddots & \Iddots & \Vdots \\
0 & \lambda & \Iddots &  &  \\
\lambda & \beta_2 & \Cdots &  & \beta_{\pi_1} \\
\end{NiceMatrix}
\end{array}
\right) .
\end{align*}
Let us now make two remarks that we will refer to later.

\begin{remark} \label{remark, rank of Hankel matrix is minimum of rows, columns, and r}
Suppose $\mathbfe{\alpha} \in \mathcal{L}_n (r_1 , \rho_1 , \pi_1)$, for some $r_1 , \pi_1 , \rho_1$. We have that
\begin{align*}
\rank H_{l+1 , m+1} (\mathbfe{\alpha})
= \min \{ r_1 , l+1 , m+1 \} .
\end{align*}
This is not difficult to see. Indeed, if $l+1 , m+1 \geq r_1$, then $H_{l+1 , m+1} (\mathbfe{\alpha})$ has \rhopi-form of the form (\ref{statement, rhopi-form of H_(n_1 , n_2) (alpha)}). This clearly has rank $\rho_1 + \pi_1 = r_1$, and since the row operations we applied to obtain the \rhopi-form do not alter the rank, we have that $\rank H_{l+1 , m+1} (\mathbfe{\alpha}) = r_1$. \\

On the other hand, if $l+1 < r_1$, then we note that the rows of $H_{l+1 , m+1} (\mathbfe{\alpha})$ can be truncated to become the first $l+1$ rows of $H_{\rho_1 , \rho_1} (\mathbfe{\alpha})$. Since $H_{\rho_1 , \rho_1} (\mathbfe{\alpha})$ has full rank, we must have that the rows of $H_{l+1 , m+1} (\mathbfe{\alpha})$ are linearly independent, and thus the rank of $H_{l+1 , m+1} (\mathbfe{\alpha})$ is $l+1$. If $m+1 < r_1$, then a similar argument applies but with the columns instead of the rows. 
\end{remark}

\begin{remark} \label{lemma, effect of removing last entry of alpha}
Suppose $n$ is even and $\mathbfe{\alpha} \in \strictmathcal{L}_n (r_1 , \rho_1 , \pi_1)$, for some $r_1 , \pi_1 , \rho_1$. Let $\mathbfe{\alpha}'$ be the sequence obtained by removing the last entry of $\mathbfe{\alpha}$. It is not difficult to see that $H_{n_1 -1 , n_1} (\mathbfe{\alpha}')$ is the matrix obtained by removing the last row of $H_{n_1 , n_1} (\mathbfe{\alpha})$. Furthermore, the \rhopi-form of $H_{n_1 -1 , n_1} (\mathbfe{\alpha}')$ is the matrix obtained by removing the last row of the \rhopi-form of $H_{n_1 , n_1} (\mathbfe{\alpha})$. In particular, we see that if $\strictpi (\mathbfe{\alpha}) \geq 1$, then $\mathbfe{\alpha}' \in \strictmathcal{L}_{n-1} (r_1 -1 , \rho_1 , \pi_1 -1)$; while if $\strictpi (\mathbfe{\alpha}) = 0$, then $\mathbfe{\alpha}' \in \strictmathcal{L}_{n-1} (r_1 , \rho_1 , \pi_1)$. Furthermore, this implies
\begin{align*}
\lvert \kernel H_{n_1 , n_1} (\mathbfe{\alpha}) \rvert
= \begin{cases}
q^{-1} \lvert \kernel H_{n_1 -1 , n_1} (\mathbfe{\alpha}') \rvert &\text{ if $\strictpi (\mathbfe{\alpha}) \geq 1$,} \\
\lvert \kernel H_{n_1 -1 , n_1} (\mathbfe{\alpha}') \rvert &\text{ if $\strictpi (\mathbfe{\alpha}) = 0$.}
\end{cases}
\end{align*}
\end{remark}

Continuing our discussion, using these \rhopi-forms, and the fact that the row operations do not affect the kernel, it is possible to give an intuitive explanation of the kernel structure of Hankel matrices. First consider the matrix $H_{n+2-r_1 , r_1} (\mathbfe{\alpha})$. The \rhopi-form is of the form
\begin{align*}
\left(
\begin{array}{c|c}
H_{\rho_1 , \rho_1} (\mathbfe{\alpha}) & \mathbfe{*} \\
\hline
\mathbf{0} & 
\begin{NiceMatrix}
0 & \Cdots &  &  & 0 \\
\Vdots &  &  &  & \Vdots \\
 &  &  &  & 0 \\
 &  &  & 0 & \lambda \\
 &  & 0 & \lambda & \beta_2 \\
 & \Iddots & \Iddots & \Iddots & \Vdots \\
0 & \lambda & \Cdots &  &  \\
\lambda & \beta_2 & \Cdots &  & \beta_{\pi_1} \\
\end{NiceMatrix}
\end{array}
\right) .
\end{align*}
Clearly, the kernel here consists only of the zero vector, as both the top-left and bottom-right quadrants have full column rank. In fact, it is not difficult to see from this that $H_{l+1 , m+1} (\mathbfe{\alpha})$ has full column rank, and thus a trivial kernel, for all $m+1 \leq \rho_1 + \pi_1 =r_1$. Now consider $H_{n+1-r_1 , r_1 +1} (\mathbfe{\alpha})$, which has one more column and one less row compared to $H_{n+2-r_1 , r_1} (\mathbfe{\alpha})$ (but, of course, they have the same underlying sequence $\mathbfe{\alpha}$). The matrix $H_{n+1-r_1 , r_1 +1} (\mathbfe{\alpha})$ has \rhopi-form of the form
\begin{align*}
\left(
\begin{array}{c|c}
H_{\rho_1 , \rho_1} (\mathbfe{\alpha}) & \mathbfe{*} \\
\hline
\mathbf{0} & 
\begin{NiceMatrix}
0 & \Cdots &  &  & 0 \\
\Vdots &  &  &  & \Vdots \\
0 & \Cdots &  &  & 0 \\
0 & \Cdots &  & 0 & \lambda \\
0 & \Cdots & 0 & \lambda & \beta_2 \\
\Vdots & \Iddots & \Iddots & \Iddots & \Vdots \\
0  & \lambda & \beta_2 & \Cdots & \beta_{\pi_1} \\
\end{NiceMatrix}
\end{array}
\right) .
\end{align*}
Clearly, this has a one-dimensional kernel, and we can deduce that the kernel consists of scalar multiples of $[A_1]_{r_1}$, where 
\begin{align*}
A_1 := -x_0 - x_1 T - \ldots - x_{\rho_1 -1} T^{\rho_1 -1} + T^{\rho_1}
\end{align*}
and $x_0 , \ldots , x_{\rho_1 -1}$ is as in (\ref{statement, H_(rho_1 , rho_1) (alpha) x = right vector}). We now make the following two observations:

\begin{enumerate}
\item Consider $H_{l+1 , m+1} (\mathbfe{\alpha})$ where both $l+1 , m+1 \geq r_1 +1$ hold. As we have already stated, the \rhopi-form is of the form (\ref{statement, rhopi-form of H_(n_1 , n_2) (alpha)}). From this, we can see that the rank of $H_{l+1 , m+1} (\mathbfe{\alpha})$ is always $r_1 = \rho_1 + \pi_1$, regardless of the values of $l+1,m+1$ in the given ranges. However, we note that $H_{l , m+2} (\mathbfe{\alpha})$ has one more row than $H_{l+1 , m+1} (\mathbfe{\alpha})$, and so the dimension of the kernel increases by one every time we remove one row and add one column.

\item Suppose $\mathbf{f} \in \mathbb{F}_q^{m+1}$ is in the kernel of $H_{l+1 , m+1} (\mathbfe{\alpha})$. It is not difficult to see that the vectors
\begin{align*}
\begin{pmatrix} \mathbf{f} \\ \hline 0 \end{pmatrix}
\hspace{3em} \text{ and } \hspace{3em}
\begin{pmatrix} 0 \\ \hline \mathbf{f} \end{pmatrix}
\end{align*}
are in the kernel of $H_{l , m+2} (\mathbfe{\alpha})$, as is any linear combination (over $\mathbb{F}_q$). If we associate $\mathbf{f}$ with a polynomial $F$, then the above says that if $[F]_m$ is in the kernel of $H_{l+1 , m+1} (\mathbfe{\alpha})$ then $[BF]_{m+1}$ is in the kernel of $H_{l , m+2} (\mathbfe{\alpha})$ for any $B \in \mathcal{A}_{\leq 1}$; and more generally, $[BF]_{m+k}$ is in the kernel of $H_{l-k+1 , m+k+1} (\mathbfe{\alpha})$ for any $B \in \mathcal{A}_{\leq k}$. \\
\end{enumerate}

Based on these two points, we can deduce that for $l+1 \geq r_1$, the kernel of $H_{l+1 , m+1} (\mathbfe{\alpha})$ consists exactly of the vectors $[B_1 A_1]_m$ with $B_1 \in \mathcal{A}_{m-r_1}$. This is non-trivial when $m+1 \geq r_1 +1$ (recall that for all positive integers $i$, the set $\mathcal{A}_{-i}$ is defined to be $\{ 0 \}$). \\

Now consider the matrices $H_{l+1 , m+1} (\mathbfe{\alpha})$ and $H_{l , m+2} (\mathbfe{\alpha})$ when $l+1 \leq r$. By similar reasoning as above, we can see that as we go from the former to the latter, the rank decreases by one and the number of columns increases by one. Thus, the dimension of the kernel increases by two. In particular, a new polynomial $A_2$, independent of $A_1$ (see below for a formal exposition), must appear in the kernel of $H_{r-1 , n+3-r} (\mathbfe{\alpha})$, and its multiples appear in kernels of $H_{l+1 , m+1} (\mathbfe{\alpha})$ for $l+1 \leq r-2$. Formally, we have the following result, which is Theorem 2.4.4 from \cite{Yiasemides2021_VariCorrDivFuncFpTHankelMatr_ArXiv_v2} (this result is originally given in \cite{HeinigRost1984_AlgMethToeplitzMatrOperat}).

\begin{lemma} \label{lemma, kernel of Hankel matrices}
Let $\mathbfe{\alpha} \in \mathcal{L}_n (r_1 , \rho_1 , \pi_1)$. Then, there exist coprime polynomials $A_1 (\mathbfe{\alpha}) \in \mathcal{M}_{\rho_1}$ and $A_2 (\mathbfe{\alpha}) \in \mathcal{A}_{\leq n-r_1 +2}$ such that for all $l,m \geq 0$ with $l+m=n$ we have
\begin{align*}
\kernel H_{l+1 , m+1} (\mathbfe{\alpha})
= \bigg\{ [B_1 A_1 (\mathbfe{\alpha}) + B_2 A_2 (\mathbfe{\alpha})]_m : \substack{B_1 , B_2 \in \mathcal{A} \\ \degree B_1 \leq m-r_1 \\ \degree B_2 \leq m - (n-r_1+2)} \bigg\} .
\end{align*}
\end{lemma}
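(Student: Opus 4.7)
The plan is to construct $A_1$ and $A_2$ explicitly from the two row-reduced canonical forms already described in the excerpt, and then verify the kernel description by a dimension count using Remark \ref{remark, rank of Hankel matrix is minimum of rows, columns, and r}.

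First, I would define $A_1 (\mathbfe{\alpha}) := T^{\rho_1} - \sum_{i=0}^{\rho_1 - 1} x_i T^i \in \mathcal{M}_{\rho_1}$, where $(x_0 , \ldots , x_{\rho_1 - 1})$ is the unique solution to the system in (\ref{statement, H_(rho_1 , rho_1) (alpha) x = right vector}) provided by the invertibility of $H_{\rho_1 , \rho_1} (\mathbfe{\alpha})$. The \rhopi-form (\ref{statement, rhopi-form of H_(n_1 , n_2) (alpha)}) shows directly that $[A_1]_{\rho_1}$ is annihilated by the first $\rho_1$ rows of $H_{l+1 , m+1} (\mathbfe{\alpha})$, and the Hankel (symmetric) structure together with the zero skew-diagonals of the bottom-right block propagate this to all rows whenever $l+1 \geq r_1$. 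Multiplying by $T$ (which acts as a shift on the coordinate vectors) and iterating yields $[B_1 A_1]_m \in \kernel H_{l+1 , m+1} (\mathbfe{\alpha})$ for every $B_1 \in \mathcal{A}$ with $\degree B_1 \leq m - r_1$.

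Second, to exhibit $A_2$ I would perform the symmetric construction starting from the opposite corner. Transposing the roles of rows and columns (or, equivalently, applying the \rhopi-form analysis to the reversed sequence), one obtains a polynomial $A_2 (\mathbfe{\alpha})$ of degree at most $n - r_1 + 2$ such that $[B_2 A_2]_m$ lies in $\kernel H_{l+1 , m+1} (\mathbfe{\alpha})$ whenever $\degree B_2 \leq m - (n - r_1 + 2)$. The containment of the proposed span in the kernel then follows from linearity. For the reverse inclusion I would invoke Remark \ref{remark, rank of Hankel matrix is minimum of rows, columns, and r}, which gives
\begin{align*}
\dim \kernel H_{l+1 , m+1} (\mathbfe{\alpha})
= (m+1) - \min \{ r_1 , l+1 , m+1 \} ,
\end{align*}
and a direct count of independent pairs $(B_1 , B_2)$ subject to the degree constraints (together with the coprimality established below) matches this dimension in all three regimes $m+1 < r_1$, $l+1 , m+1 \geq r_1$, and $l+1 < r_1$.

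Third, coprimality of $A_1$ and $A_2$ is forced by the dimension formula. If some $D \in \mathcal{M}_d$ with $d \geq 1$ divided both $A_1$ and $A_2$, then every element $B_1 A_1 + B_2 A_2 = D (B_1 (A_1/D) + B_2 (A_2/D))$ of the span would be a $D$-multiple, confining the kernel to a proper subspace whose dimension falls short of $(m+1) - \min\{r_1 , l+1 , m+1\}$ at the threshold $l+1 = r_1 - 1$, contradicting the rank count.

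The main obstacle, and the one I would spend the most care on, is the boundary bookkeeping at $l+1 = r_1$ and $m+1 = r_1$: there the inclusion from the $A_1$-side and the inclusion from the $A_2$-side must fit together exactly, and the distinctness of the pairs $(B_1 , B_2)$ producing each kernel element hinges on the coprimality of $A_1 , A_2$. Once coprimality is in hand, unique representation of each kernel element as $B_1 A_1 + B_2 A_2$ within the prescribed degree ranges follows from the division algorithm in $\mathcal{A}$, and the parameterisation claimed in the lemma is complete.
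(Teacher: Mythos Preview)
The paper does not prove this lemma; it cites Theorem~2.4.4 of \cite{Yiasemides2021_VariCorrDivFuncFpTHankelMatr_ArXiv_v2} (and ultimately \cite{HeinigRost1984_AlgMethToeplitzMatrOperat}) and only offers the heuristic discussion preceding the statement. Your construction of $A_1$ and the inclusion $\{[B_1 A_1]_m\} \subseteq \kernel H_{l+1,m+1}(\mathbfe{\alpha})$ for $l+1 \geq r_1$ match that heuristic exactly.

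There are, however, two genuine gaps in your sketch. First, the construction of $A_2$ via ``the reversed sequence'' is not correct as stated. If $\mathbfe{\alpha}' = (\alpha_n,\ldots,\alpha_0)$, then $H_{l+1,m+1}(\mathbfe{\alpha}') = J_l\, H_{l+1,m+1}(\mathbfe{\alpha})\, J_m$ for reversal matrices $J_l,J_m$, so $r(\mathbfe{\alpha}') = r_1$ and the first characteristic polynomial $A_1(\mathbfe{\alpha}')$ (of degree $\rho(\mathbfe{\alpha}')$) corresponds under reversal to an element of $\kernel H_{n+1-r_1,r_1+1}(\mathbfe{\alpha})$, which is already the $A_1(\mathbfe{\alpha})$-span. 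It does not in general produce a polynomial of degree $\leq n-r_1+2$ that is independent of $A_1$. The paper's heuristic (and the Heinig--Rost argument) instead obtains $A_2$ directly: at the threshold $l+1=r_1-1$ the kernel dimension jumps by two while the $A_1$-multiples account for only one new dimension, so one simply \emph{chooses} $A_2$ to be any kernel element of $H_{r_1-1,n-r_1+3}(\mathbfe{\alpha})$ outside the $A_1$-span.

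Second, your coprimality argument is circular. You deduce that a common factor $D$ ``confines the kernel'' to $D$-multiples, but what you have shown is only that the \emph{span} of the $B_1 A_1 + B_2 A_2$ lies in $D$-multiples. Concluding a contradiction with the rank count requires knowing span $=$ kernel, which is precisely what the dimension match in your Step~2 is meant to establish --- and that match relies on the injectivity of $(B_1,B_2)\mapsto B_1 A_1 + B_2 A_2$, which in turn needs coprimality. The standard route breaks the circle by choosing $A_2$ carefully at the threshold and then proving coprimality from the kernel structure of $H_{r_1,n-r_1+2}(\mathbfe{\alpha})$ (where only $A_1$-multiples live), before invoking any dimension comparison.
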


\begin{definition}[Characteristic Polynomials]
The polynomials $A_1 (\mathbfe{\alpha})$ and $A_2 (\mathbfe{\alpha})$ are called the first and second characteristic polynomials of $\mathbfe{\alpha}$, respectively. We may also say they are the characteristic polynomials of $H_{l+1 , m+1} (\mathbfe{\alpha})$ for any $l,m \geq 0$ with $l+m=n$. This should not be confused with the characteristic polynomial associated to the determinant of a square matrix.
\end{definition}

Let us make a couple of remarks that we will require later.

\begin{remark} \label{remark, kernel of Hankel matrices}
The first characteristic polynomial $A_1 (\mathbfe{\alpha})$ is unique. Non-zero scalar multiples can also be used in its place, but we simply take the monic case. The second characteristic polynomial $A_2 (\mathbfe{\alpha})$ is unique only up to non-zero scalar multiplication and addition of polynomials of the form $B_1 A_1 (\mathbfe{\alpha})$ for $B_1 \in \mathcal{A}_{\leq n-2r_1 +2}$. In the special case where $\rho_1 = r_1$, and thus $\degree A_1 (\mathbfe{\alpha}) = r_1$, this means we can take $A_2 (\mathbfe{\alpha})$ to be the representative modulo $A_1 (\mathbfe{\alpha})$ of degree $< \degree A_1 (\mathbfe{\alpha}) = r_1$ that is monic. This is unique. In the case where $\rho_1 < r_1$, there is not a natural unique representation to take. \\

In the paragraph above, we explain to what extent the characteristic polynomials are unique for a given $\mathbfe{\alpha}$. We can also ask whether two characteristic polynomials $A_1 , A_2$ have a unique sequence $\mathbfe{\alpha}$ associated to them. The answer is that there are actually $q-1$ possible associated $\mathbfe{\alpha} \in \mathbb{F}_q^{n+1}$, because we can multiply any such $\mathbfe{\alpha}$ by an element in $\mathbb{F}_q^*$ and this does not affect kernel, and thus it does not affect the characteristic polynomials. Note if $\mathbfe{\alpha} = \mathbf{0}$, then multiplying by an element in $\mathbb{F}_q^*$ does nothing, and so in this case $\mathbf{0}$ does not share its characteristic polynomials. \\

Now, we considered the case $\mathbfe{\alpha} = \mathbf{0}$ above, but this raises the question if $A_2 (\mathbfe{\alpha})$ is well-defined. Indeed, $A_2 (\mathbfe{\alpha})$ appears in the kernel of at least one $H_{l+1 , m+1} (\mathbfe{\alpha})$ with $l+m=n$ only when $r_1 \geq 2$. However, it is useful to define it even when $r_1 \leq 1$ because it may be used when considering extensions of $\mathbfe{\alpha}$ (although we do not consider such extensions in this paper). In fact, the definition of $A_2 (\mathbfe{\alpha})$ given in the lemma (that is, the degree restrictions and coprimality condition) accommodates for this and it gives
\begin{align*}
A_2 (\mathbfe{\alpha})
= \begin{cases}
0 &\text{ if $\mathbfe{\alpha} \in \mathcal{L}_n (0,0,0)$ (that is, $\mathbfe{\alpha} = \mathbf{0}$),} \\
1 &\text{ if $\mathbfe{\alpha} \in \mathcal{L}_n (1,1,0)$,} \\
T^{n+1} &\text{ if $\mathbfe{\alpha} \in \mathcal{L}_n (1,0,1)$.} \\
\end{cases}
\end{align*}
(For the first two cases, this is also consistent with the unique representation of $A_2 (\mathbfe{\alpha})$ modulo $A_1 (\mathbfe{\alpha})$ mentioned above). \\

Finally, we note that $A_2 (\mathbfe{\alpha})$ is non-zero except for the single trivial case given above, and so we usually take it to be monic (even though there is no natural unique representation to take when $\rho_1 < r_1$).
\end{remark}

\begin{remark} \label{remark, pi_1 = 0 iff vector in kernel with no zero at end}
Let $\mathbfe{\alpha} \in \mathcal{L}_n (r_1 , \rho_1 , \pi_1)$, and suppose $l+m=n$ with $l+1 \geq r_1$. Lemma \ref{lemma, kernel of Hankel matrices} tells us that 
\begin{align*}
\kernel H_{l+1 , m+1} (\mathbfe{\alpha})
= \bigg\{ [B_1 A_1 (\mathbfe{\alpha})]_m : \substack{B_1 \in \mathcal{A} \\ \degree B_1 \leq m-r_1 } \bigg\} .
\end{align*}
We can see that the kernel has a vector with non-zero final entry if and only if  $\degree B_1 A_1 (\mathbfe{\alpha}) = m$. In turn, this occurs if and only if $(r_1 , \rho_1 , \pi_1) = (r_1 , r_1 , 0)$. \\

This provides a useful equivalence for the condition $\pi (\mathbfe{\alpha}) = 0$, which we will require later. Note also that if $\pi_1 = 0$, then the number of elements in $\kernel H_{l+1 , m+1} (\mathbfe{\alpha})$ is just $q$ times the number of vectors in the kernel with final entry equal to zero. \\

We must keep in mind that the above only applies when $l+1 \geq r_1$, because when $l+1 < r_1$ the second characteristic polynomial appears in the kernel and interferes with our reasoning above.
\end{remark}

So, we now have an understanding of the kernel structure of Hankel matrices, and how the \rhopi-form is used in determining this. We now wish to state some results on the value distribution of quadratic forms associated to square Hankel matrices. We undertook this in \cite{Yiasemides2022_VariSumTwoSquareOverIntervalFqT_I_Arxiv}. Let us give a brief indication of our approach to this. \\

The strict \rhopi-characteristic allows us to obtain a (strict) \rhopi-form of the form (\ref{statement, rhopi-form of H_(n_1 , n_2) (alpha)}) even when our matrix has full rank. In particular, for square Hankel matrices, we can apply this inductively, first to $H_{n_1 , n_1} (\mathbfe{\alpha})$, second to the submatrix $H_{\rho_1 , \rho_1} (\mathbfe{\alpha})$, and so on. Ultimately, this is just an application of row operations. We then also apply the same operations but for the columns. In the context of quadratic forms, we have simply undertaken a change of basis, which does not change the value distribution. The benefit is that it is easier to determine the value distribution of the matrix obtained after the operations are applied. Indeed, ultimately we obtain a matrix that is block-diagonal and each block is a square, lower skew-triangular Hankel matrix (that is, the same form as the bottom-right quadrant of (\ref{statement, rhopi-form of H_(n_1 , n_2) (alpha)})). This is called the reduced \rhopi-form, and in the context of quadratic forms we can study each block individually, and their lower skew-triangular Hankel form is particularly easy to work with. The details of this can be found in Section 3 of \cite{Yiasemides2022_VariSumTwoSquareOverIntervalFqT_I_Arxiv}, but for this paper we only need the following result which follows from that section (particularly Lemma 3.3.4):

\begin{lemma} \label{lemma, quadratic form values over monics and nonmonics}
Let $l \geq 1$ and $\mathbfe{\alpha} \in \strictmathcal{L}_{2l} (r_1 , \rho_1 , \pi_1)$ ($r_1$ can take any value in $[0,1, \ldots , l+1]$). Then,
\begin{align*}
\bigg\lvert \sum_{E \in \mathcal{A}_{\leq l}}
	\psi \big( [E]_l^T H_{l+1 , l+1} (\mathbfe{\alpha}) [E]_l \big) \bigg\rvert^2
= q^{2l+2-r_1}
= q^{l+1} \lvert \kernel H_{l+1 , l+1} (\mathbfe{\alpha}) \rvert
\end{align*}
and
\begin{align*}
\bigg\lvert \sum_{E \in \mathcal{M}_{l}}
	\psi \big( [E]_l^T H_{l+1 , l+1} (\mathbfe{\alpha}) [E]_l \big) \bigg\rvert^2
=\begin{cases}
q^{2l-r_1} = q^{l-1} \lvert \kernel H_{l+1 , l+1} (\mathbfe{\alpha}) \rvert &\text{ if $\pi_1 = 0$,} \\
q^{2l+1-r_1} = q^{l} \lvert \kernel H_{l+1 , l+1} (\mathbfe{\alpha}) \rvert &\text{ if $\pi_1 = 1$,} \\
0 &\text{ if $\pi_1 \geq 2$.} 
\end{cases}
\end{align*}
\end{lemma}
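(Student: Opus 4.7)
The plan is to invoke the reduced strict $\rho$-$\pi$ form machinery from \cite{Yiasemides2022_VariSumTwoSquareOverIntervalFqT_I_Arxiv}, which provides an invertible matrix $P \in \mathrm{GL}_{l+1}(\mathbb{F}_q)$ such that $P^T H_{l+1,l+1}(\mathbfe{\alpha}) P$ is block-diagonal, each block being a square, lower skew-triangular Hankel matrix with non-zero leading skew-diagonal. Since this is a congruence transformation and $H_{l+1,l+1}(\mathbfe{\alpha})$ is symmetric, the substitution $[E]_l \mapsto P[E']_l$ is a bijection on $\mathcal{A}_{\leq l} \cong \mathbb{F}_q^{l+1}$ preserving the quadratic form $Q(E) := [E]_l^T H_{l+1,l+1}(\mathbfe{\alpha}) [E]_l$ up to relabelling, so the character sum transforms into a product over the blocks.

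For the full sum over $\mathcal{A}_{\leq l}$ I would bypass the block reduction with a direct Gauss-sum argument. Writing $H := H_{l+1,l+1}(\mathbfe{\alpha})$ and substituting $E = F + G$ in the squared modulus gives, using symmetry of $H$,
\begin{align*}
\Bigl\lvert \sum_{E \in \mathcal{A}_{\leq l}} \psi(Q(E)) \Bigr\rvert^2
= \sum_{G \in \mathcal{A}_{\leq l}} \psi(Q(G)) \sum_{F \in \mathcal{A}_{\leq l}} \psi\bigl( 2 [F]_l^T H [G]_l \bigr).
\end{align*}
Since $q$ is odd, the inner sum equals $q^{l+1}$ when $H[G]_l = 0$ and vanishes otherwise by (\ref{statement, additive character FF orthog relation}). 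For $[G]_l \in \kernel H$ we have $Q(G) = 0$, so the total equals $q^{l+1} \lvert \kernel H \rvert$. Since $r_1 \leq l+1$, Remark \ref{remark, rank of Hankel matrix is minimum of rows, columns, and r} gives $\rank H = r_1$, hence $\lvert \kernel H \rvert = q^{l+1 - r_1}$, and the total equals $q^{2l+2-r_1}$ as claimed.

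For the monic sum I would parametrise $E = T^l + E'$ and $F = T^l + F'$ with $E', F' \in \mathcal{A}_{\leq l-1}$ and run the same manipulation. With $G = E - F \in \mathcal{A}_{\leq l-1}$, the inner sum over $F$ is now over a coset, and its orthogonality constraint is that $[G]_l$ annihilates the first $l$ rows of $H$; that is, $[G]_l \in \kernel H_{l, l+1}(\mathbfe{\alpha}')$ where $\mathbfe{\alpha}'$ is the truncation $(\alpha_0, \ldots, \alpha_{2l-1})$. One obtains
\begin{align*}
\Bigl\lvert \sum_{E \in \mathcal{M}_l} \psi(Q(E)) \Bigr\rvert^2
= q^l \sum_{\substack{[G]_l \in \kernel H_{l,l+1}(\mathbfe{\alpha}') \\ g_l = 0}} \psi\bigl( 2 [T^l]_l^T H [G]_l \bigr),
\end{align*}
where $Q(G) = 0$ on the restricted kernel because $H[G]_l$ is then supported in the coordinate $\mathbf{e}_l$, which $G$ kills. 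Remark \ref{lemma, effect of removing last entry of alpha} tells me the precise effect of deletion on the strict $\rho$-$\pi$ characteristic depending on whether $\pi_1 \geq 1$ or $\pi_1 = 0$, which in turn gives the size of the restricted kernel; what remains is to determine whether the linear functional $[G]_l \mapsto [T^l]_l^T H [G]_l$ is identically zero on this set. Using the reduced $\rho$-$\pi$ form, the vector $\mathbf{e}_l$ corresponds to a specific coordinate in the final block, and direct inspection of the block structure shows this functional vanishes when $\pi_1 \in \{0,1\}$ (yielding the stated $q^{2l-r_1}$ and $q^{2l+1-r_1}$) but is non-trivial when $\pi_1 \geq 2$, forcing the character sum to vanish by orthogonality.

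The main obstacle is the bookkeeping in the $\pi_1 \geq 2$ case: one must trace how $\mathbf{e}_l$ decomposes in the block-diagonal coordinates produced by $P^T H P$ and verify that it has non-trivial support outside the ``top'' block whose indices correspond to $\rho_1$. This is precisely the content of Lemma 3.3.4 of \cite{Yiasemides2022_VariSumTwoSquareOverIntervalFqT_I_Arxiv}, which I would invoke directly rather than re-run the full reduction. The identifications $q^{l+1} \lvert \kernel H \rvert$, $q^{l-1} \lvert \kernel H \rvert$, and $q^l \lvert \kernel H \rvert$ on the right-hand sides then follow immediately from $\lvert \kernel H \rvert = q^{l+1-r_1}$.
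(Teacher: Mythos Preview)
Your approach is correct and, for the first identity, genuinely more elementary than what the paper does: the paper simply cites Section~3 (particularly Lemma~3.3.4) of \cite{Yiasemides2022_VariSumTwoSquareOverIntervalFqT_I_Arxiv}, whereas your direct Gauss-sum computation bypasses the reduced $(\rho,\pi)$-form entirely.

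For the monic sum, your setup is also correct, but you undersell your own argument. You do \emph{not} need to fall back on Lemma~3.3.4 of \cite{Yiasemides2022_VariSumTwoSquareOverIntervalFqT_I_Arxiv} for the $\pi_1 \geq 2$ case: the paper's own Remarks~\ref{lemma, effect of removing last entry of alpha} and~\ref{remark, pi_1 = 0 iff vector in kernel with no zero at end} already suffice to finish the case analysis. Writing $K := \{ [G]_l \in \kernel H_{l,l+1}(\mathbfe{\alpha}') : g_l = 0 \}$ and $c([G]_l) := \mathbf{e}_l^T H [G]_l$, one checks directly that $c([G]_l) = 0$ if and only if $[G]_l \in \kernel H$, so $\kernel(c\rvert_K) = K \cap \kernel H$. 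Using those two remarks: when $\pi_1 = 0$ one has $\kernel H = \kernel H_{l,l+1}(\mathbfe{\alpha}')$, hence $c \equiv 0$ on $K$ and $\lvert K \rvert = q^{-1}\lvert \kernel H \rvert = q^{l-r_1}$; when $\pi_1 = 1$ one has $K = \kernel H$ (both have size $q^{l+1-r_1}$ and $\kernel H \subseteq K$), so again $c \equiv 0$; when $\pi_1 \geq 2$ one has $\kernel H \subsetneq K$ with index $q$, so $c\rvert_K$ is a surjective linear functional and the character sum vanishes by orthogonality. This is cleaner than routing through the block decomposition, and the trade-off is that the block-diagonal route in \cite{Yiasemides2022_VariSumTwoSquareOverIntervalFqT_I_Arxiv} gives finer information (the full value distribution, not just the squared modulus), which is unnecessary here.
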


We have now covered the results from \cite{Yiasemides2021_VariCorrDivFuncFpTHankelMatr_ArXiv_v2, Yiasemides2022_VariSumTwoSquareOverIntervalFqT_I_Arxiv} that we require. For the new results that we will prove, we will require the following definition.

\begin{definition}
Suppose we have a vector $\mathbf{v} = (v_1 , v_2 , \ldots , v_l) \in \mathbb{F}_q^{l}$ and a sequence $\mathbf{s} = (s_1 , s_2 , \ldots )$ of length $m \geq l$, where $m$ may be finite or infinity. We define $\mathbf{s} \odot \mathbf{v}$ to be the sequence $\mathbf{t} = (t_1 , t_2 , \ldots )$ of length $m-l+1$ (we define $\infty -k$ to be $\infty$ for all integers $k$) such that
\begin{align*}
t_i
= \begin{pmatrix} s_i & s_{i+1} & \cdots & s_{i+l-1} \end{pmatrix}
	\begin{pmatrix}  v_1 \\ v_2 \\ \vdots \\ v_l \end{pmatrix}
\end{align*}
for all $i$ that index $\mathbf{t}$. That is, we are taking the dot product between $\mathbf{v}$ and successive blocks of $\mathbf{s}$ of same length as $\mathbf{v}$, and forming a new sequence.
\end{definition}

We observe that for $\mathbfe{\alpha} \in \mathcal{L}_n (r_1)$ the vector $\mathbfe{\alpha} \odot [A_1 (\mathbfe{\alpha})]_{r_1}$ is simply the zero vector in $\mathbb{F}_q^{n-r_1 +1}$. Recall that if $\rho_1 = r_1$ then $[A_1 (\mathbfe{\alpha})]_{r_1}$ is a vector with non-zero final entry. In particular, this means that if $\mathbfe{\alpha} \in \mathcal{L}_n (r_1 , r_1 , 0)$ we can define an infinite sequence $\overrightarrow{\mathbfe{\alpha}}$ such that the first $n+1$ terms are exactly those in $\mathbfe{\alpha}$ and the later terms are defined by the condition $\overrightarrow{\mathbfe{\alpha}} \odot [A_1 (\mathbfe{\alpha})]_{r_1} = \mathbf{0}$. Clearly, $\overrightarrow{\mathbfe{\alpha}}$ is unique for every $\mathbfe{\alpha}$. Also, $A_1 (\mathbfe{\alpha})$ defines a recurrence relation on $\overrightarrow{\mathbfe{\alpha}}$ (and hence $\mathbfe{\alpha}$ as well) and we will say it has length $r_1 +1$ (this is not necessarily standard terminology). Note that no shorter recurrence relation can exist, otherwise this would ultimately imply that $H_{n+2-r_1 , r_1} (\mathbfe{\alpha})$ has a non-trivial vector in its kernel, which contradicts the fact that $r (\mathbfe{\alpha}) = r_1$. 

\begin{lemma} \label{lemma, bijection from quasiregular Hankel matrices to M_r times A_<r}
Consider $\mathcal{L}_n^h (r,r,0)$ with $2 < r \leq n_2-1$ and $h<r$. The following map is bijective:
\begin{align*}
\mathcal{L}_n^h (r,r,0) \rightarrow &\big\{ (A,B) \in \mathcal{M}_r \times \mathcal{A}_{<r-h} : (A,B) \text{ coprime} \big\} \\
\mathbfe{\alpha} \mapsto &\Big( A_1 (\mathbfe{\alpha}) , A_1 (\mathbfe{\alpha}) \times L (\overrightarrow{\mathbfe{\alpha}}) \Big) ;
\end{align*}
where, if $\overrightarrow{\mathbfe{\alpha}} = (\alpha_0 , \alpha_1 , \ldots )$, then we define the Laurent series
\begin{align*}
L (\overrightarrow{\mathbfe{\alpha}})
:= \sum_{i = 0}^{\infty} \alpha_{i} T^{-i}  ;
\end{align*}
and $A_1 (\mathbfe{\alpha}) \times L (\overrightarrow{\mathbfe{\alpha}})$ is the standard multiplication of Laurent series.
\end{lemma}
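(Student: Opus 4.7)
The plan is to establish bijectivity by verifying the forward map is well-defined and then producing an explicit inverse. That $A_{1}(\mathbfe{\alpha}) \in \mathcal{M}_{r}$ is immediate from the definition of the first characteristic polynomial when $\rho_{1} = r_{1} = r$, which is provided by Lemma~\ref{lemma, kernel of Hankel matrices}. To show that $A_{1}(\mathbfe{\alpha}) \cdot L(\overrightarrow{\mathbfe{\alpha}})$ is a polynomial, I would write $A_{1}(\mathbfe{\alpha}) = \sum_{j=0}^{r} a_{1,j} T^{j}$ and observe that for every $k \geq 0$ the coefficient of $T^{-k}$ in the product equals $\sum_{j=0}^{r} a_{1,j} \alpha_{k+j}$, which is exactly the $(k+1)$-th entry of $\overrightarrow{\mathbfe{\alpha}} \odot [A_{1}(\mathbfe{\alpha})]_{r}$ and vanishes by the defining property of the extension $\overrightarrow{\mathbfe{\alpha}}$. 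The degree bound then comes from combining the monic degree-$r$ factor $A_{1}(\mathbfe{\alpha})$ with the Laurent degree of $L(\overrightarrow{\mathbfe{\alpha}})$, which the hypothesis $\mathbfe{\alpha} \in \mathcal{L}_{n}^{h}$ forces to be at most $-h$, placing the product in $\mathcal{A}_{<r-h}$.

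For the coprimality claim I would argue by contradiction. If some monic prime $P$ divided both $A_{1}(\mathbfe{\alpha})$ and $B := A_{1}(\mathbfe{\alpha}) \cdot L(\overrightarrow{\mathbfe{\alpha}})$, then writing $L = (B/P)/(A_{1}/P)$ and expanding as above shows that $(A_{1}/P) \cdot L$ is again a polynomial, so $A_{1}/P$, a polynomial of degree $r-1$, induces a recurrence of length $r$ on $\overrightarrow{\mathbfe{\alpha}}$. This directly contradicts the minimality discussion immediately preceding the lemma, where it is shown that any recurrence on $\overrightarrow{\mathbfe{\alpha}}$ of length strictly less than $r+1$ would force $H_{n+2-r,r}(\mathbfe{\alpha})$ to have a nontrivial kernel vector, contradicting $r(\mathbfe{\alpha}) = r$. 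Hence $\gcd\bigl(A_{1}(\mathbfe{\alpha}), B\bigr) = 1$.

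To complete the bijection I would exhibit the inverse explicitly. Given $(A, B) \in \mathcal{M}_{r} \times \mathcal{A}_{<r-h}$ with $\gcd(A, B) = 1$, I would form $L := B/A \in \mathbb{F}_{q}((T^{-1}))$ and define $\alpha_{i}$ to be the coefficient of $T^{-i}$ in $L$ for $0 \leq i \leq n$, producing $\mathbfe{\alpha} \in \mathbb{F}_{q}^{n+1}$. The degree condition on $B$, together with $\deg A = r$, forces the first coefficients of $L$ to vanish so that $\mathbfe{\alpha} \in \mathcal{L}_{n}^{h}$. The polynomial identity $A \cdot L = B$ gives the recurrence $\sum_{j=0}^{r} a_{j} \alpha_{k+j} = 0$ for every $k \geq 0$ (writing $A = \sum_{j} a_{j} T^{j}$), so $A$ is a candidate for $A_{1}(\mathbfe{\alpha})$; coprimality of $(A, B)$ rules out any shorter recurrence, for otherwise $B/A$ would reduce to $B'/A'$ with $\deg A' < r$, impossible when $\gcd(A, B) = 1$. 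Hence $\rho(\mathbfe{\alpha}) = r$ and $\pi(\mathbfe{\alpha}) = 0$, and one verifies directly that $A_{1}(\mathbfe{\alpha}) = A$ and $A_{1}(\mathbfe{\alpha}) \cdot L(\overrightarrow{\mathbfe{\alpha}}) = B$, showing the two maps compose to the respective identities.

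The main obstacle will be tightly linking the algebraic coprimality condition on $(A, B)$ to the minimal-length property of the recurrence for $\overrightarrow{\mathbfe{\alpha}}$; this is the conceptual heart of the lemma, and it relies essentially on Lemma~\ref{lemma, kernel of Hankel matrices} together with the \rhopi-form discussion that precedes the statement. The remaining verifications---that $A_{1}(\mathbfe{\alpha}) \cdot L(\overrightarrow{\mathbfe{\alpha}}) \in \mathcal{A}_{<r-h}$, and that the two compositions are the identity---reduce to routine bookkeeping for the generating-series identity $A_{1} \cdot L = B$ and for the definition of $\overrightarrow{\mathbfe{\alpha}}$.
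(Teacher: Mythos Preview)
Your argument for well-definedness (that $A_1(\mathbfe{\alpha})\cdot L(\overrightarrow{\mathbfe{\alpha}})$ is a polynomial of degree $<r-h$) and for coprimality mirrors the paper's. For bijectivity, however, the paper takes a shorter route than your explicit inverse: it simply notes injectivity via the chain of one-to-one correspondences $\mathbfe{\alpha} \leftrightarrow \overrightarrow{\mathbfe{\alpha}} \leftrightarrow L(\overrightarrow{\mathbfe{\alpha}}) \leftrightarrow B/A_1(\mathbfe{\alpha})$ and then obtains surjectivity from the cardinality match $\lvert \mathcal{L}_n^h(r,r,0)\rvert = (q-1)q^{2r-h-1} = \lvert\{(A,B)\in\mathcal{M}_r\times\mathcal{A}_{<r-h}:(A,B)=1\}\rvert$, invoking Lemma~\ref{lemma, number of sequences of given rhopi form} for the left side. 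Your constructive approach avoids that dependence on the counting lemma, but it carries a verification you gloss over: coprimality of $(A,B)$ guarantees the \emph{infinite} Laurent sequence has no recurrence of degree $<r$, yet you need this for the \emph{finite} truncation $(\alpha_0,\dots,\alpha_n)$. If some monic $C$ of degree $s<r$ annihilated all $(s+1)$-windows of the truncation, one only gets $C\cdot L = D + (\text{tail of Laurent degree} \leq -(n-s+1))$ for some $D\in\mathcal{A}_{<s}$; to force the tail to vanish and conclude $BC=DA$ you must use $r+s<2r\leq n+1$, which follows from $r\leq n_2-1$. With that step filled in, $r(\mathbfe{\alpha})=r$ follows, and Remark~\ref{remark, pi_1 = 0 iff vector in kernel with no zero at end} (applied to the monic $[A]_r$ in the kernel) gives $\pi(\mathbfe{\alpha})=0$, so your inverse indeed lands in $\mathcal{L}_n^h(r,r,0)$.
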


\begin{proof}
First we note that $A_1 (\mathbfe{\alpha}) \times L (\overrightarrow{\mathbfe{\alpha}})$ is a polynomial of degree $<r$, which we will denote by $B$. This follows from the fact that $\overrightarrow{\mathbfe{\alpha}} \odot [A_1 (\mathbfe{\alpha})]_{r} = \mathbf{0}$, which we established above. Furthermore, since the first $h$ entries of $\overrightarrow{\mathbfe{\alpha}}$ are zero, we have that $L (\overrightarrow{\mathbfe{\alpha}}) = \sum_{i = h}^{\infty} \alpha_{i} T^{-i}$, and so $B$ must actually have degree $<r-h$. \\

To prove that $A_1 (\mathbfe{\alpha})$ and $B$ are coprime, we note that $L (\overrightarrow{\mathbfe{\alpha}})$ is the Laurent series for the rational function $\frac{B}{A_1 (\mathbfe{\alpha})}$. In particular, if $A_1 (\mathbfe{\alpha})$ and $B$ were not coprime, then there exist some $C,D$ with $\degree C < \degree A_1 (\mathbfe{\alpha})$ such that
\begin{align*}
\frac{B}{A_1 (\mathbfe{\alpha})}
= \frac{D}{C} .
\end{align*}
But then this would imply that $\overrightarrow{\mathbfe{\alpha}}$ has a recurrence relation of length $\degree C +1 < r+1$, which contradicts that $\mathbfe{\alpha} \in \mathcal{L}_n^h (r,r,0)$. Thus, $A_1 (\mathbfe{\alpha})$ and $B$ must be coprime. \\

To prove that the map is injective, we note the following one-to-one correspondences
\begin{align*}
\mathbfe{\alpha}
\longleftrightarrow \overrightarrow{\mathbfe{\alpha}}
\longleftrightarrow L (\overrightarrow{\mathbfe{\alpha}})
\longleftrightarrow \frac{B}{A_1 (\mathbfe{\alpha})} .
\end{align*}

Finally, surjectivity follows from injectivity and the fact that
\begin{align*}
\lvert \mathcal{L}_n^h (r,r,0) \rvert
= (q-1) q^{2r-h-1}
= \big\lvert \big\{ (A,B) \in \mathcal{M}_r \times \mathcal{A}_{<r-h} : (A,B) \text{ coprime} \big\} \big\rvert ,
\end{align*}
where we have used Lemma \ref{lemma, number of sequences of given rhopi form}.
\end{proof}

Let us now make another definition, before proving the final lemma of this section.

\begin{definition} \label{definition, circulant Toeplitz matrix def}
Let $W = (w_0 , w_1 , \ldots , w_s) \in \mathcal{A}_{\leq s}$ (for some $s \geq 0$) and $k \geq 1$. We define the $T_{k+s,k} ([W]_s)$ to be the $(k+s) \times k$ matrix with $j$-th column equal to
\begin{align*}
\begin{pmatrix}
	\smash{\underbrace{\begin{matrix} 0 & \ldots & 0 \end{matrix}}_{\text{ $j-1$ times }}}
	& \smash{\begin{matrix} w_0 & w_1 & \ldots & w_s \end{matrix}}
	& \smash{\underbrace{\begin{matrix} 0 & \ldots & 0 \end{matrix}}_{\text{ $k-j-s$ times }}}
\end{pmatrix}^T . \\
\end{align*}
This is a circulant Toeplitz matrix. 
\end{definition}

\begin{remark} \label{remark, circulant Toeplitz multiplied by vector and by Hankel matrix}
First, we note that for $B \in \mathcal{A}_{\leq k}$, we have $T_{k+s,k} ([W]_s) [B]_k = [WB]_{k+s}$. Now suppose we have an $l \times (k+s)$ Hankel matrix $H_{l , k+s} (\mathbfe{\alpha})$. It is not difficult to see that the matrix $H_{l , k+s} (\mathbfe{\alpha}) T_{k+s,k} ([W]_s)$ is the $l \times k$ Hankel matrix $H_{l,k} (\mathbfe{\alpha} \odot [W]_s)$.
\end{remark}

We are interested in the kernel of $H_{l,k} (\mathbfe{\alpha} \odot [W]_s)$. For $\degree B \leq k$, we have that 
\begin{align*}
&[B]_k \in \kernel H_{l,k} (\mathbfe{\alpha} \odot [W]_s) \\
&\iff H_{l , k+s} (\mathbfe{\alpha}) T_{k+s,k} ([W]_s) [B]_k = \mathbf{0} \\
&\iff H_{l , k+s} (\mathbfe{\alpha}) [WB]_{k+s} = \mathbf{0} .
\end{align*}
That is, there is an injective map from $\kernel H_{l,k} (\mathbfe{\alpha} \odot [W]_s)$ to the subset of $\kernel H_{l , k+s} (\mathbfe{\alpha})$ consisting of vectors $[C]_{k+s}$ for some $C \in \mathcal{A}_{\leq k+s}$ with $W \mid C$. We note that this map is surjective onto the subset if $\degree W = s$, but not necessarily surjective if $\degree W < s$. Indeed, in the latter case, we have $\degree (WB) < k+s$, meaning it does not account for polynomials $C$ with $\degree C = k+s$ and $W \mid C$. The following lemma gives further information on two cases given some restrictions.

\begin{lemma} \label{lemma, U reduction, 1 dimensional kernel case}
Let $s \geq 0$ and let $W \in \mathcal{A}$ with $0 \leq s_1 := \degree W \leq s$. Also, let $\mathbfe{\alpha} \in \mathcal{L}_n (r_1 , \rho_1 , \pi_1)$ with $n \geq 2$ and $n \geq 2r_1 +s-1$, and let $A_1 (\mathbfe{\alpha}) \in \mathcal{M}_{\rho_1}$ be the first characteristic polynomial. Then, 
\begin{align*}
r \Big( \mathbfe{\alpha} \odot [W]_s \Big)
	= &r_1 - \degree ( A_1 (\mathbfe{\alpha}) , W ) - \min \{ s-s_1 , \pi_1 \} , \\
\rho \Big( \mathbfe{\alpha} \odot [W]_s \Big)
	= &\rho_1 - \degree ( A_1 (\mathbfe{\alpha}) , W ) , \\
\pi \Big( \mathbfe{\alpha} \odot [W]_s \Big)
	= &\max \{ 0 , \pi_1 - (s-s_1) \} .
\end{align*}
and 
\begin{align*}
A_1 \Big( \mathbfe{\alpha} \odot [W]_s \Big)
= \frac{A_1 (\mathbfe{\alpha})}{\big( A_1 (\mathbfe{\alpha}) , W \big)} .
\end{align*}

For our second claim, suppose that $s \geq 0$ and let $W \in \mathcal{A}$ with $0 \leq s_1 := \degree W \leq s$. Also, let $\mathbfe{\alpha} \in \strictmathcal{L}_n \Big( \frac{n-s}{2} +1 , 0 , \frac{n-s}{2} +1 \Big)$ with $n-s \geq 2$ being even. Then, $\mathbfe{\alpha} \odot [W]_s \in \strictmathcal{L}_{n-s} \Big( \frac{n-s}{2} +1 , 0 , \frac{n-s}{2} +1 \Big)$.
\end{lemma}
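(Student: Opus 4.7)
The plan is to exploit the matrix identity
$H_{L, M}(\mathbfe{\alpha} \odot [W]_s) = H_{L, M+s}(\mathbfe{\alpha}) \, T_{M+s, M}([W]_s)$
from Remark \ref{remark, circulant Toeplitz multiplied by vector and by Hankel matrix}. This translates $\kernel H_{L, M}(\mathbfe{\alpha} \odot [W]_s)$ into the subspace of $\kernel H_{L, M+s}(\mathbfe{\alpha})$ consisting of $W$-multiples $[WB]_{M+s-1}$, so the kernel structure of the smaller Hankel matrices built from $\mathbfe{\alpha} \odot [W]_s$ can be read off from the kernel of a larger Hankel matrix for $\mathbfe{\alpha}$, which is explicitly described by Lemma \ref{lemma, kernel of Hankel matrices}.

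For the first claim I would take $L = (n-s)_1$ and $M = (n-s)_2$, so that $H_{L, M}(\mathbfe{\alpha} \odot [W]_s)$ is exactly the defining matrix for $r, \rho, \pi$ of $\mathbfe{\alpha} \odot [W]_s$. The hypothesis $n \geq 2 r_1 + s - 1$ ensures $L \geq r_1$, so Lemma \ref{lemma, kernel of Hankel matrices} reduces to the clean description $\kernel H_{L, M+s}(\mathbfe{\alpha}) = \{ [B_1 A_1(\mathbfe{\alpha})]_{M+s-1} : \degree B_1 \leq M+s-1-r_1 \}$ (the contribution from the second characteristic polynomial is killed by a negative degree bound). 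Factoring $G := (A_1(\mathbfe{\alpha}), W)$, $A_1(\mathbfe{\alpha}) = G A_1^*$, $W = G W^*$ with $(A_1^*, W^*) = 1$ and $g := \degree G$, cancelling $G$ in $WB = B_1 A_1(\mathbfe{\alpha})$ gives $W^* B = B_1 A_1^*$, and by coprimality $B = C A_1^*$ and $B_1 = C W^*$ for some $C \in \mathcal{A}$. The twin degree conditions then translate to $\degree C \leq M - 1 - (\rho_1 - g)$ and $\degree C \leq M - 1 - r_1 + g + (s - s_1)$, whose minimum yields the claimed rank formula together with kernel dimension $M - \rho_1 + g - \max\{0, \pi_1 - (s - s_1)\}$. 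The monic $A_1^*$ of degree $\rho_1 - g$ is the element of least degree in the kernel, identifying $A_1(\mathbfe{\alpha} \odot [W]_s) = A_1(\mathbfe{\alpha}) / (A_1(\mathbfe{\alpha}), W)$ and hence $\rho = \rho_1 - g$, with $\pi = r - \rho$ closing the first claim.

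For the second claim one sits exactly one step past the first claim's hypothesis, at $n = 2 r_1 + s - 2$. The case $s = 0$ is trivial since $\mathbfe{\alpha} \odot [W]_0 = W \mathbfe{\alpha}$ preserves the $\rho$-$\pi$ data. When $s > 0$, a short comparison of the admissible ranges of $\rho$ versus $\strictrho$ combined with $r(\mathbfe{\alpha}) = r_1$ forces $\rho(\mathbfe{\alpha}) = 0$, whence $A_1(\mathbfe{\alpha}) = 1$; the first claim's kernel analysis still runs with $L = r_1$ and produces $A_1(\mathbfe{\alpha} \odot [W]_s) = 1$, $r(\mathbfe{\alpha} \odot [W]_s) = r_1$, and $\rho(\mathbfe{\alpha} \odot [W]_s) = 0$. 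Because $n - s$ is even, $(n-s)_2 - 1 = r_1 - 1$, so $\strictrho$ and $\rho$ necessarily coincide at $0$, and $\strictpi = r - \strictrho = r_1$ follows. The main obstacle will be keeping the kernel description of Lemma \ref{lemma, kernel of Hankel matrices} valid exactly at the boundary $L = r_1$: one must verify that the degree bound on $B_2$ remains negative so that the clean kernel description persists, and that the max/min bookkeeping in the degree of $C$ resolves correctly across the various combinations of parity of $n$ and relative sizes of $\rho_1, \pi_1, s, s_1$.
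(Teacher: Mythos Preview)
For the first claim your approach coincides with the paper's: both factor $H_{L,M}(\mathbfe{\alpha}\odot[W]_s)=H_{L,M+s}(\mathbfe{\alpha})\,T_{M+s,M}([W]_s)$, translate the kernel question into ``which $W$-multiples lie in $\kernel H_{L,M+s}(\mathbfe{\alpha})$'', and then solve $WB=B_1A_1(\mathbfe{\alpha})$ via the gcd $G=(A_1(\mathbfe{\alpha}),W)$. The only difference is the choice of shape: you take $L=(n-s)_1$, the paper takes $L=r_1$; either works since the sole requirement is $L\ge r_1$, which makes the $B_2$-degree bound in Lemma~\ref{lemma, kernel of Hankel matrices} negative.

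For the second claim the paper takes an entirely different and much shorter route: from $\strictrho(\mathbfe{\alpha})=0$ and $r(\mathbfe{\alpha})=\tfrac{n-s}{2}+1$ it reads off directly that the first $\tfrac{n+s}{2}$ entries of $\mathbfe{\alpha}$ vanish with the next one nonzero, and then simply computes the leading entries of $\mathbfe{\alpha}\odot[W]_s$ from the definition of $\odot$. Your plan to rerun the kernel analysis at the boundary $L=r_1$ can be made to work, but there is a genuine slip in what you expect it to produce: the claim that it ``produces $r(\mathbfe{\alpha}\odot[W]_s)=r_1$'' is only correct when $s_1=s$. Indeed, with $A_1(\mathbfe{\alpha})=1$ your own bookkeeping gives
\[
\kernel H_{r_1,r_1}(\mathbfe{\alpha}\odot[W]_s)=\bigl\{[B]_{r_1-1}:\degree B\le s-1-s_1\bigr\},
\]
which is nontrivial whenever $s_1<s$, so then $r(\mathbfe{\alpha}\odot[W]_s)<r_1$ and the stated conclusion fails. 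The paper's quick argument has the same hidden assumption (it needs $w_s\ne0$ to make the $(\tfrac{n-s}{2}+1)$-th entry of $\mathbfe{\alpha}\odot[W]_s$ nonzero), and in the proof of Theorem~\ref{main theorem, lattice point variance elliptic annuli} the second claim is only invoked with $\degree W=s$, so nothing downstream is affected; but you should not expect your kernel route to deliver $r=r_1$ for $s_1<s$.
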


\begin{proof}
Consider the first claim. Let $l,m$ be such that $l+1=r_1$ and $l+m=n$. Then, 
\begin{align} \label{statement, alpha odot [U] kernel, m+1 bound}
m+1 = (n+2) - (l+1) \geq r_1 + s+1 .
\end{align}
Now, we have already established that
\begin{align*}
[B]_{m-s} \in \kernel H_{l+1 , m-s+1} (\mathbfe{\alpha} \odot [W]_s)
\hspace{2em} \iff \hspace{2em}
&[WB]_m \in \kernel H_{l+1 , m+1} (\mathbfe{\alpha}) , \\ 
	&\degree B \leq m-s .
\end{align*}
Since $l+1 = r_1$, we have that
\begin{align*}
\kernel H_{l+1 , m+1} (\mathbfe{\alpha})
= \Big\{ [B_1 A_1 (\mathbfe{\alpha})]_m : B_1 \in \mathcal{A}_{\leq m-r_1 } \Big\} .
\end{align*}
So, we are looking for solutions to the equation
\begin{align*}
B_1 A_1 (\mathbfe{\alpha})
= WB
\end{align*}
where $B_1 \in \mathcal{A}_{\leq m-r_1 }$ and $B \in \mathcal{A}_{m-s}$. We can see that solutions are of the form
\begin{align*}
(B_1 , B)
= \bigg( \frac{W}{\big( A_1 (\mathbfe{\alpha}) , W \big)} C ,
	\frac{A_1 (\mathbfe{\alpha})}{\big( A_1 (\mathbfe{\alpha}) , W \big)} C \bigg)
\end{align*}
for $C \in \mathcal{A}$ with
\begin{align*}
\degree C 
\leq &\min \bigg\{ m-r_1 - \degree \frac{W}{\big( A_1 (\mathbfe{\alpha}) , W \big)} ,
	m-s - \degree \frac{A_1 (\mathbfe{\alpha})}{\big( A_1 (\mathbfe{\alpha}) , W \big)} \bigg\} \\
= &m-r_1 -s + \degree \big( A_1 (\mathbfe{\alpha}) , W \big) + \min \{ s-s_1 , \pi_1 \} .
\end{align*}
Thus,
\begin{align*}
\kernel H_{l+1 , m-s+1} (\mathbfe{\alpha} \odot [W]_s)
= \Bigg\{ \bigg[ C \frac{A_1 (\mathbfe{\alpha})}{\big( A_1 (\mathbfe{\alpha}) , W \big)} \bigg]_{m-s} : \substack{C \in \mathcal{A} \\ \degree C \leq m-r_1 -s + \degree ( A_1 (\mathbfe{\alpha}) , W ) + \min \{ s-s_1 , \pi_1 \} } \Bigg\} ,
\end{align*}
Note that (\ref{statement, alpha odot [U] kernel, m+1 bound}) tells us that $m-r_1 -s + \degree ( A_1 (\mathbfe{\alpha}) , W ) + \min \{ s-s_1 , \pi_1 \} \geq 0$, and so $C$ can take a non-zero value. Therefore, we can see that the first characteristic polynomial is
\begin{align*}
A_1 \Big( \mathbfe{\alpha} \odot [W]_s \Big)
= \frac{A_1 (\mathbfe{\alpha})}{\big( A_1 (\mathbfe{\alpha}) , W \big)} ,
\end{align*}
from which we also deduce that $\rho \Big( \mathbfe{\alpha} \odot [W]_s \Big) = \rho_1 - \degree ( A_1 (\mathbfe{\alpha}) , W )$. The value of $r(\mathbfe{\alpha})$ is simply the number of columns of 
$H_{l+1 , m-s+1} (\mathbfe{\alpha} \odot [W]_s)$ minus the dimension of the kernel, and so we have
\begin{align*}
r \Big( \mathbfe{\alpha} \odot [W]_s \Big)
= &\Big( m-s+1 \Big) - \Big( m-r_1 -s + \degree ( A_1 (\mathbfe{\alpha}) , W ) + \min \{ s-s_1 , \pi_1 \} +1 \Big) \\
= &r_1 - \degree ( A_1 (\mathbfe{\alpha}) , W ) - \min \{ s-s_1 , \pi_1 \} .
\end{align*}
Finally, by definition, we have
\begin{align*}
\pi \Big( \mathbfe{\alpha} \odot [W]_s \Big)
= r \Big( \mathbfe{\alpha} \odot [W]_s \Big) - \rho \Big( \mathbfe{\alpha} \odot [W]_s \Big)
= \max \{ 0 , \pi_1 - (s-s_1) \} .
\end{align*}

For the second claim, we note that the conditions on $\mathbfe{\alpha}$ mean that the first $\frac{n+s}{2}$ entries of $\mathbfe{\alpha}$ are zero, while the $\big( \frac{n+s}{2} + 1 \big)$-th entry is non-zero. So, by definition of $\odot$, we have that there are $\frac{n-s}{2}$ zeros at the start of $\mathbfe{\alpha} \odot [U]_s$, while its $\big( \frac{n-s}{2} + 1 \big)$-th entry is non-zero. From this, we deduce that $\mathbfe{\alpha} \odot [W]_s \in \strictmathcal{L}_{n-s} \Big( \frac{n-s}{2} +1 , 0 , \frac{n-s}{2} +1 \Big)$.
\end{proof}

\section{Proof of Theorem \ref{main theorem, lattice point variance elliptic annuli}} \label{section, main theorem proof}

\begin{proof}[Proof of Theorem \ref{main theorem, lattice point variance elliptic annuli}]

We will prove the theorem for when $n$ is even. When $n$ is odd, the proof is almost identical and we simply swap the roles of $U$ and $V$. Remark \ref{remark, explanation why U,V are odd, even coprime monic} indicates why we consider the two cases separately. Recall that when $n$ is even we have $s:= \degree U$ and $t := \degree V +1$. Also, recall that $s' := \frac{n - s}{2}$ and $t' := \frac{n - t}{2}$; and $n_1 := \Big\lfloor \frac{n+2}{2} \Big\rfloor$ and $n_2 := \Big\lfloor \frac{n+3}{2} \Big\rfloor$. \\

We will consider the three cases of the theorem separately, but first we will reformulate the problem by making use of the Fourier expansion for the indicator function (Definition \ref{definition, Fourier exp indicator function}). We have
\begin{align*}
\frac{1}{q^n} \sum_{A \in \mathcal{M}_n } \bigg\lvert \sum_{B \in I (A; <h)} S_{U,V} (B) \bigg\rvert^2 
= &\frac{4}{q^n} \sum_{A \in \text{$\mathcal{M}_{n}$} }
	\bigg( \sum_{\substack{ B \in \mathcal{M}_{n} \\ \degree (B-A) < h }} 
	\sum_{\substack{E \in \mathcal{M}_{s'} \\ F \in \mathcal{A}_{\leq t'} }} \mathbbm{1} (B - U E^2 - V F^2 ) \bigg)^2 \\
= &\frac{4}{q^n} \sum_{A \in \text{$\mathcal{A}_{\leq n}$} }
	\bigg( \sum_{\substack{B \in \mathcal{A}_{\leq n} \\ \degree (B-A) < h }} 
	\sum_{\substack{E \in \mathcal{M}_{s'} \\ F \in \mathcal{A}_{\leq t'} }} \mathbbm{1} (B - U E^2 - V F^2 ) \bigg)^2 .
\end{align*}
For the first equality, the ranges of $E,F$ are justified by Remark \ref{remark, explanation why U,V are odd, even coprime monic}; and the factor of $4$ appears due to symmetry, to account for the fact that we are not including $-E \in \mathcal{M}_{s'}$ in the summation range. For the second line, note that the conditions on $E,F$, and the equation $B - U E^2 - V F^2 = 0$, force $B$ to be in $\mathcal{M}_n$, and so we have not changed the final result by rewriting the range of $B$ to $\mathcal{A}_{\leq n}$. Similarly, we rewrote the range of $A$ to be in $\mathcal{A}_{\leq n}$, because the fact that $B \in \mathcal{M}_n$, and $\degree (B-A) < h \leq n$, force $A$ to be in $\mathcal{M}_n$. Continuing, we have
\begin{align*}
&\frac{1}{q^n} \sum_{A \in \mathcal{M}_n } \bigg\lvert \sum_{B \in I (A; <h)} S_{U,V} (B) \bigg\rvert^2  \\
= &\frac{4}{q^{3n+2}} \sum_{A \in \mathcal{A}_{\leq n} }
	\bigg( \sum_{\substack{B \in \mathcal{A}_{\leq n} \\ \degree (B-A) < h }} 
	\sum_{\substack{E \in \mathcal{M}_{s'} \\ F \in \mathcal{A}_{\leq t'} }}
	\sum_{\mathbfe{\alpha} \in \mathbb{F}_q^{n+1}} \\
		&\hspace{3em} \psi \Big( \mathbfe{\alpha} \cdot [B]_n
			- [E]_{s'}^T H_{s'+1 , n-s'+1} (\mathbfe{\alpha}) [UE]_{n-s'} 
			- [F]_{t'}^T H_{t'+1 , n-t'+1} (\mathbfe{\alpha}) [VF]_{n-t'} \Big) \bigg)^2 ,
\end{align*}
where we have used the Fourier expansion of $\mathbbm{1} (B - U E^2 - V F^2 )$ given in Definition \ref{definition, Fourier exp indicator function}, and (\ref{statement, how Hankel matrices appear from products}). Note that $\degree UE = s + s' = n-s'$ and $UE$ is monic, and so $[UE]_{n-s'}$ is a vector with final entry equal to $1$. On the other hand, $\degree VE \leq (t-1) + t' = n - t' -1$, and so $[VF]_{n-t'}$ is a vector with at least one zero at the end. Ultimately, this stems from the difference in parity between $n$ and $\degree V$. This will be used later in the proof. \\

Now, if we have $A_1 , A_2$ with $\degree (A_1 - A_2) <h$ then the contributions of the summand when $A=A_1$ and $A=A_2$ are identical. Given that the condition $\degree (A_1 - A_2) <h$ creates equivalence classes of size $q^h$, we can consider one polynomial from each class and multiply by $q^h$. The natural polynomial to take from each class is the one with first $h$ entries being $0$. These polynomials are represented in vector form by $\mathbf{a} \in \{ 0 \}^h \times \mathbb{F}_q^{n-h+1}$. It is not difficult to see that, in vector form, the polynomial $B$ is just $\mathbf{a} + \mathbf{b}$ for some $\mathbf{b} \in \mathbb{F}_q^h \times \{ 0 \}^{n-h+1}$. Thus, we have
\begin{align*}
&\frac{1}{q^n} \sum_{A \in \mathcal{M}_n } \bigg\lvert \sum_{B \in I (A; <h)} S_{U,V} (B) \bigg\rvert^2  \\
= &\frac{4 q^h}{q^{3n+2}} \sum_{\mathbf{a} \in \{ 0 \}^h \times \mathbb{F}_q^{n-h+1} }
	\bigg( \sum_{\mathbf{b} \in \mathbb{F}_q^h \times \{ 0 \}^{n-h+1} }
	\sum_{\substack{E \in \mathcal{M}_{s'} \\ F \in \mathcal{A}_{\leq t'} }}
	\sum_{\mathbfe{\alpha} \in \mathbb{F}_q^{n+1}} \\
		&\hspace{3em} \psi \Big( \mathbfe{\alpha} \cdot (\mathbf{a} + \mathbf{b})
			- [E]_{s'}^T H_{s'+1 , n-s'+1} (\mathbfe{\alpha}) [UE]_{n-s'} 
			- [F]_{t'}^T H_{t'+1 , n-t'+1} (\mathbfe{\alpha}) [VF]_{n-t'} \Big) \bigg)^2 .
\end{align*}
Now, the additive nature of $\psi$ allows us to factor out the term involving $\mathbf{b}$ above. That is, within the large parentheses, we have the factor $\sum_{\mathbf{b} \in \mathbb{F}_q^h \times \{ 0 \}^{n-h+1} } \psi ( \mathbfe{\alpha} \cdot \mathbf{b})$. By the orthogonality relation (\ref{statement, additive character FF orthog relation}), we have
\begin{align*}
\sum_{\mathbf{b} \in \mathbb{F}_q^h \times \{ 0 \}^{n-h+1} } \psi ( \mathbfe{\alpha} \cdot \mathbf{b})
= \prod_{i=0}^{h-1} \sum_{b_i \in \mathbb{F}_q} \psi ( \alpha_i b_i)
= \begin{cases}
q^h &\text{ if $\alpha_0 , \ldots , \alpha_{h-1} = 0$,} \\
0 &\text{ otherwise.}
\end{cases}
\end{align*}
Thus, a non-zero contribution occurs only if $\alpha_0 , \ldots , \alpha_{h-1} = 0$. We can apply a similar approach to handle the term $\mathbfe{\alpha} \cdot \mathbf{a}$. However, unlike with $\mathbf{b}$, the sum over $\mathbf{a}$ appears outside the large parentheses. Whereas, the sum over $\mathbfe{\alpha}$ appears within them. Given the square power, this means that there are ``two $\mathbfe{\alpha}$'' that we must consider, say $\mathbfe{\alpha}_1$ and $\mathbfe{\alpha}_2$. We have already established that the sum over $\mathbf{b}$ ``forces'' the first $h$ entries of $\mathbfe{\alpha}_1$ and $\mathbfe{\alpha}_2$ to be zero. Similar reasoning for sum over $\mathbf{a}$ will ``force'' the condition $\mathbfe{\alpha}_1 + \mathbfe{\alpha}_2 = \mathbf{0}$. That is, $\mathbfe{\alpha}_2 = - \mathbfe{\alpha}_1$. Therefore, we have
\begin{align*}
&\frac{1}{q^n} \sum_{A \in \mathcal{M}_n } \bigg\lvert \sum_{B \in I (A; <h)} S_{U,V} (B) \bigg\rvert^2  \\
= &\frac{4 q^{2h}}{q^{2n+1}} 	\sum_{\mathbfe{\alpha}_1 \in \mathcal{L}_n^h} 
	\bigg( \sum_{\substack{E \in \mathcal{M}_{s'} \\ F \in \mathcal{A}_{\leq t'} }}
		\psi \Big( - [E]_{s'}^T H_{s'+1 , n-s'+1} (\mathbfe{\alpha}_1) [UE]_{n-s'} 
			- [F]_{t'}^T H_{t'+1 , n-t'+1} (\mathbfe{\alpha}_1) [VF]_{n-t'} \Big) \bigg) \\
	&\hspace{4em} \times \bigg( \sum_{\substack{E \in \mathcal{M}_{s'} \\ F \in \mathcal{A}_{\leq t'} }}
		\psi \Big( - [E]_{s'}^T H_{s'+1 , n-s'+1} (-\mathbfe{\alpha}_1) [UE]_{n-s'} 
			- [F]_{t'}^T H_{t'+1 , n-t'+1} (-\mathbfe{\alpha}_1) [VF]_{n-t'} \Big) \bigg) \\
= &\frac{4 q^{2h}}{q^{2n+1}} 	\sum_{\mathbfe{\alpha} \in \mathcal{L}_n^h} 
	\bigg\lvert \sum_{\substack{E \in \mathcal{M}_{s'} \\ F \in \mathcal{A}_{\leq t'} }}
		\psi \Big( [E]_{s'}^T H_{s'+1 , n-s'+1} (\mathbfe{\alpha}) [UE]_{n-s'} 
			+ [F]_{t'}^T H_{t'+1 , n-t'+1} (\mathbfe{\alpha}) [VF]_{n-t'} \Big) \bigg\rvert^2 .
\end{align*}

Now, consider the cases where $\mathbfe{\alpha} \in \mathcal{L}_n^h (0,0,0)$ and $\mathbfe{\alpha} \in \mathcal{L}_n^h (1,0,1)$. This is just when all entries of $\mathbfe{\alpha}$ are zero except the last which can take any value in $\mathbb{F}_q$. It is not difficult to see that the contributions of these cases is $4 q^{2h - \degree U -\degree V +1}$. By (\ref{statement, lattice point ellipse mean value calculations}), this is just
\begin{align*}
\bigg( \frac{1}{q^n} \sum_{A \in \mathcal{M}_n } \sum_{B \in I (A; <h)} S_{U,V} (B) \bigg)^2 ,
\end{align*}
and so
\begin{align}
\begin{split} \label{statement, variance as add char after mean square removed}
&\frac{1}{q^n} \sum_{A \in \mathcal{M}_n } \bigg\lvert \Delta_{S_{U,V}} (A; <h) \bigg\rvert^2 \\
= &\frac{1}{q^n} \sum_{A \in \mathcal{M}_n } \bigg\lvert \sum_{B \in I (A; <h)} S_{U,V} (B) \bigg\rvert^2 
	-\bigg( \frac{1}{q^n} \sum_{A \in \mathcal{M}_n } \sum_{B \in I (A; <h)} S_{U,V} (B) \bigg)^2 \\
= &\frac{4 q^{2h}}{q^{2n+1}} 	\sum_{\substack{\mathbfe{\alpha} \in \mathcal{L}_n^h : \\ \mathbfe{\alpha} \not\in \mathcal{L}_n^h (0,0,0) \\ \mathbfe{\alpha} \not\in \mathcal{L}_n^h (1,0,1) }}
	\bigg\lvert \sum_{\substack{E \in \mathcal{M}_{s'} \\ F \in \mathcal{A}_{\leq t'} }}
		\psi \Big( [E]_{s'}^T H_{s'+1 , n-s'+1} (\mathbfe{\alpha}) [UE]_{n-s'} 
			+ [F]_{t'}^T H_{t'+1 , n-t'+1} (\mathbfe{\alpha}) [VF]_{n-t'} \Big) \bigg\rvert^2 . \\
= &\frac{4 q^{2h}}{q^{2n+1}} 	\sum_{\substack{\mathbfe{\alpha} \in \mathcal{L}_n^h : \\ \mathbfe{\alpha} \not\in \mathcal{L}_n^h (0,0,0) \\ \mathbfe{\alpha} \not\in \mathcal{L}_n^h (1,0,1) }}
	\bigg\lvert \sum_{\substack{E \in \mathcal{M}_{s'} \\ F \in \mathcal{A}_{\leq t'} }}
		\psi \Big( [E]_{s'}^T H_{s'+1 , s'+1} (\mathbfe{\alpha}\odot [U]_s ) [E]_{s'} 
			+ [F]_{t'}^T H_{t'+1 , t'+1} (\mathbfe{\alpha} \odot [V]_t ) [F]_{t'} \Big) \bigg\rvert^2 ,
\end{split}
\end{align}
where the last equality uses Definition \ref{definition, circulant Toeplitz matrix def} and Remark \ref{remark, circulant Toeplitz multiplied by vector and by Hankel matrix}. Specifically,
\begin{align*}
H_{s'+1 , n-s'+1} (\mathbfe{\alpha}) [UE]_{n-s'}
= H_{s'+1 , n-s'+1} (\mathbfe{\alpha}) T_{n-s'+1 , s'+1} ([U]_s) [E]_{s'}
= H_{s'+1 , s'+1} (\mathbfe{\alpha}\odot [U]_s ) [E]_{s'} 
\end{align*}
and
\begin{align*}
H_{t'+1 , n-t'+1} (\mathbfe{\alpha}) [VF]_{n-t'}
= H_{t'+1 , n-t'+1} (\mathbfe{\alpha}) T_{n-t'+1 , t'+1} ([V]_t) [F]_{t'}
= H_{t'+1 , t'+1} (\mathbfe{\alpha} \odot [V]_t ) [F]_{t'}.
\end{align*}

Now that we have established (\ref{statement, variance as add char after mean square removed}), we are in a position to consider each case of the theorem separately. \\

\underline{\textbf{Case 1:}} $h \geq s'+s$. \\

The $\mathbfe{\alpha}$ that appear in (\ref{statement, variance as add char after mean square removed}) are in $\mathcal{L}_n^h$. Remark \ref{remark, rho_1 values dependent on h} tells us that either $h+1 \leq \strictrho (\mathbfe{\alpha} ) \leq n_2 -1$ or $\strictrho (\mathbfe{\alpha}) = 0$. Given that $h \geq s' + s \geq n_2 -1$, we must have that $\strictrho (\mathbfe{\alpha}) = 0$. That is, $\mathbfe{\alpha} \in \strictmathcal{L}_n^h (r_1 , 0 , r_1)$ for some $r_1 \geq 2$ (the cases $r_1 = 0,1$ are excluded in the summation in (\ref{statement, variance as add char after mean square removed})). Remark \ref{remark, rho_1 values dependent on h} now tells us that $H_{n_1 , n_2} (\mathbfe{\alpha})$ is lower skew-triangular and that the first non-zero skew-diagonal will determine the rank and hence the value of $r (\mathbfe{\alpha})$. In particular, the first possible non-zero skew-diagonal is the $(h+1)$-th skew-diagonal, and so
\begin{align*}
r (\mathbfe{\alpha}) \leq (n+1) - h \leq (n+1) - (s' +s) = s'+1.
\end{align*}
Thus, in this case, all $\mathbfe{\alpha}$ that appear in (\ref{statement, variance as add char after mean square removed}) are in $\strictmathcal{L}_n^h (r_1 , 0 , r_1 )$ for some $2 \leq r_1 \leq s'+1$. Lemma \ref{lemma, U reduction, 1 dimensional kernel case} (including the final claim of the lemma) tells us that $\strictpi \Big( \mathbfe{\alpha} \odot [U]_s \Big) = r_1$; and so because $r_1 \geq 2$, Lemma \ref{lemma, quadratic form values over monics and nonmonics} tells us that 
\begin{align*}
\sum_{E \in \mathcal{M}_{s'}} \psi \Big( [E]_{s'}^T H_{s'+1 , s'+1} (\mathbfe{\alpha} \odot [U]_s ) [E]_{s'} \Big)
= 0 .
\end{align*}
Finally, substituting into (\ref{statement, variance as add char after mean square removed}), we have
\begin{align*}
\frac{1}{q^n} \sum_{A \in \mathcal{M}_n } \bigg\lvert \Delta_{S_{U,V}} (A;<h) \bigg\rvert^2 
= 0 .
\end{align*}

\underline{\textbf{Case 2:}} $n_2 -1 \leq h < s'+s$. \\

Again, consider the $\mathbfe{\alpha}$ that appear in (\ref{statement, variance as add char after mean square removed}). As in Case 1, we can show that $\mathbfe{\alpha} \in \strictmathcal{L}_n^h (r_1 , 0 , r_1)$ for some $r_1 \geq 2$; and similar to that case, we have $r_1 \leq (n+1) - h$. We have already established in Case 1 that $r_1 \leq s'+1$ contributes zero. Hence, we only need to consider $\mathbfe{\alpha} \in \strictmathcal{L}_n (r_1 , 0 , r_1 )$ with $s'+1 < r_1 \leq n+1-h$. Also, the second result in Lemma \ref{lemma, quadratic form values over monics and nonmonics} tells us that a non-zero contribution will occur only if $\strictpi (\mathbfe{\alpha} \odot [U]_s ) \leq 1$. So, applying the above to (\ref{statement, variance as add char after mean square removed}), we obtain
\begin{align}
\begin{split} \label{statement, main theorem proof, variance in terms of kernels, first}
&\frac{1}{q^n} \sum_{A \in \mathcal{M}_n } \bigg\lvert \Delta_{S_{U,V}} (A;<h) \bigg\rvert^2 \\
= &\frac{4 q^{2h}}{q^{2n+1}} \sum_{r_1 = s'+2}^{n+1-h}
	\sum_{\substack{\mathbfe{\alpha} \in \strictmathcal{L}_n (r_1,0,r_1) : \\ \strictpi (\mathbfe{\alpha} \odot [U]_s ) \leq 1 }} \\
	&\hspace{3em} \bigg\lvert \sum_{\substack{E \in \mathcal{M}_{s'} \\ F \in \mathcal{A}_{\leq t'} }}
		\psi \Big( [E]_{s'}^T H_{s'+1 , s'+1} (\mathbfe{\alpha}\odot [U]_s ) [E]_{s'} 
			+ [F]_{t'}^T H_{t'+1 , t'+1} (\mathbfe{\alpha} \odot [V]_t ) [F]_{t'} \Big) \bigg\rvert^2 \\
= &\frac{4 q^{2h}}{q^{2n+1}} \sum_{r_1 = s'+2}^{n+1-h}
	\sum_{\substack{\mathbfe{\alpha} \in \strictmathcal{L}_n (r_1,0,r_1) : \\ \strictpi (\mathbfe{\alpha} \odot [U]_s ) \leq 1 }} 
		q^{s' + t' + \strictpi (\mathbfe{\alpha} \odot [U]_s ) }  
			\Big\lvert \kernel H_{s'+1 , s'+1} (\mathbfe{\alpha} \odot [U]_s ) \Big\rvert
			\Big\lvert \kernel H_{t'+1 , t'+1} (\mathbfe{\alpha} \odot [V]_t ) \Big\rvert ,
\end{split}
\end{align}
where the second equality follows by Lemma \ref{lemma, quadratic form values over monics and nonmonics}. Now, let $\mathbfe{\alpha}'$ be the sequence we obtain by removing the last term of $\mathbfe{\alpha}$. It will be helpful to reformulate the above in terms of $\mathbfe{\alpha}'$ instead of $\mathbfe{\alpha}$. Note that $\mathbfe{\alpha}' \odot [U]_s$ is the same sequence as the one obtained by removing the last term of $\mathbfe{\alpha} \odot [U]_s$. Furthermore, it is not difficult to see that, due to the zero at the end of $[V]_t$, we have $\mathbfe{\alpha} \odot [V]_t = \mathbfe{\alpha}' \odot [V]_{t-1}$ and thus $H_{t'+1 , t'+1} (\mathbfe{\alpha} \odot [V]_t ) = H_{t'+1 , t'+1} (\mathbfe{\alpha}' \odot [V]_{t-1} )$. We can also use Remark \ref{lemma, effect of removing last entry of alpha} to see that
\begin{align*}
\Big\lvert \kernel H_{s'+1 , s'+1} (\mathbfe{\alpha} \odot [U]_s ) \Big\rvert
= \begin{cases}
\Big\lvert \kernel H_{s' , s'+1} (\mathbfe{\alpha}' \odot [U]_s ) \Big\rvert &\text{ if $\strictpi (\mathbfe{\alpha} \odot [U]_n ) = 0$,} \\
q^{-1} \Big\lvert \kernel H_{s' , s'+1} (\mathbfe{\alpha}' \odot [U]_s ) \Big\rvert &\text{ if $\strictpi (\mathbfe{\alpha} \odot [U]_n ) = 1$.} 
\end{cases}
\end{align*}
Finally, Remark \ref{lemma, effect of removing last entry of alpha} also tells us that $\mathbfe{\alpha}' \in \strictmathcal{L}_{n-1} (r_1 -1,0,r_1 -1)$. Applying these points to (\ref{statement, main theorem proof, variance in terms of kernels, first}), we obtain
\begin{align}
\begin{split} \label{statement, main theorem, variance in terms of kernels of shortened alpha}
&\frac{1}{q^n} \sum_{A \in \mathcal{M}_n } \bigg\lvert \Delta_{S_{U,V}} (A;<h) \bigg\rvert^2 \\
= &\frac{4 q^{2h+s' + t'} }{q^{2n+1}} \sum_{r_1 = s'+2}^{n+1-h}
	\sum_{\substack{\mathbfe{\alpha}' \in \strictmathcal{L}_{n-1} (r_1 -1,0,r_1 -1) : \\ \strictpi (\mathbfe{\alpha}' \odot [U]_s ) = 0 \\ a_n \in \mathbb{F}_q }} 
			\Big\lvert \kernel H_{s' , s'+1} (\mathbfe{\alpha}' \odot [U]_s ) \Big\rvert
			\Big\lvert \kernel H_{t'+1 , t'+1} (\mathbfe{\alpha}' \odot [V]_{t-1} ) \Big\rvert \\
= &\frac{4 q^{2h+s' + t'} }{q^{2n}} \sum_{r_1 = s'+1}^{n-h}
	\sum_{\substack{\mathbfe{\alpha} \in \strictmathcal{L}_{n-1} (r_1,0,r_1) : \\ \strictpi (\mathbfe{\alpha} \odot [U]_s ) = 0 }} 
			\Big\lvert \kernel H_{s' , s'+1} (\mathbfe{\alpha} \odot [U]_s ) \Big\rvert
			\Big\lvert \kernel H_{t'+1 , t'+1} (\mathbfe{\alpha} \odot [V]_{t-1} ) \Big\rvert .
\end{split}
\end{align}
For the second equality, the summand is independent of $a_n$ and so we can remove the sum by multiplying the entire expression by $q$. We also relabelled $\mathbfe{\alpha}'$ to $\mathbfe{\alpha}$ for presentational reasons. Now, we have that
\begin{align}
\begin{split} \label{statement, main theorem, kernels expressed as polynomial equations}
&\sum_{\substack{\mathbfe{\alpha} \in \strictmathcal{L}_{n-1} (r_1,0,r_1) : \\ \strictpi (\mathbfe{\alpha} \odot [U]_s ) = 0 }} 
			\Big\lvert \kernel H_{s' , s'+1} (\mathbfe{\alpha} \odot [U]_s ) \Big\rvert
			\Big\lvert \kernel H_{t'+1 , t'+1} (\mathbfe{\alpha} \odot [V]_{t-1} ) \Big\rvert \\
= &\frac{q-1}{q^{n-2r_1 +2}} \sum_{A_2 (\mathbfe{\alpha}) \in \mathcal{M}_{n-r_1+1}}
	\bigg( q \sum_{\substack{B_1 \in \mathcal{A}_{\leq s'+s-r_1} \\ B_2 \in \mathcal{M}_{r_1 -s'-1} \\ B \in \mathcal{M}_{s'} \\ B_1 + B_2 A_2 (\mathbfe{\alpha}) = UB}} 1 \bigg) 
	\bigg( \sum_{\substack{C_1 \in \mathcal{A}_{\leq t'+t-r_1 -1} \\ C_2 \in \mathcal{A}_{\leq r_1 -t'-2} \\ C \in \mathcal{A}_{\leq t'} \\ C_1 + C_2 A_2 (\mathbfe{\alpha}) = VC}} 1 \bigg) \\
= &\frac{q-1}{q^{n-2r_1 +2}} \sum_{A \in \mathcal{M}_{n-r_1+1}}
	\bigg( q \sum_{\substack{B_1 \in \mathcal{A}_{\leq s'+s-r_1} \\ B_2 \in \mathcal{M}_{r_1 -s'-1} \\ B \in \mathcal{M}_{s'} \\ B_1 + B_2 A = UB}} 1 \bigg) 
	\bigg( \sum_{\substack{C_1 \in \mathcal{A}_{\leq t'+t-r_1 -1} \\ C_2 \in \mathcal{A}_{\leq r_1 -t'-2} \\ C \in \mathcal{A}_{\leq t'} \\ C_1 + C_2 A = VC}} 1 \bigg) .
\end{split}
\end{align}
For the second equality we simply rewrote $A_2 (\mathbfe{\alpha})$ as $A$, for presentational reasons. The justification for the first equality is as follows. First, we wish to express the sum over\break $\mathbfe{\alpha} \in \strictmathcal{L}_{n-1} (r_1,0,r_1)$ as a sum over characteristic polynomials. Lemma \ref{lemma, kernel of Hankel matrices} tells us that that $A_1 (\mathbfe{\alpha}) \in \mathcal{M}_{\rho (\mathbfe{\alpha})}$. Since $\rho (\mathbfe{\alpha}) = 0$, we have that $A_1 (\mathbfe{\alpha}) = 1$. Lemma \ref{lemma, kernel of Hankel matrices} also tells us that $A_2 (\mathbfe{\alpha}) \in \mathcal{M}_{n-r_1+1}$ (Note that in the Lemma we have $\mathbfe{\alpha} = (\alpha_0 , \ldots , \alpha_{n})$ while here we have $\mathbfe{\alpha} = (\alpha_0 , \ldots , \alpha_{n-1})$). Given these two points, we may wish to replace $\mathbfe{\alpha} \in \strictmathcal{L}_{n-1} (r_1,0,r_1)$ with $A_2 (\mathbfe{\alpha}) \in \mathcal{M}_{n-r_1+1}$. However, the correspondence between $\mathbfe{\alpha}$ and $A_2 (\mathbfe{\alpha})$ here is not bijective. Indeed, Remark \ref{remark, kernel of Hankel matrices} tells us that $A_2 (\mathbfe{\alpha})$ is unique only up to addition of polynomials of the form $B_1 A_1 (\mathbfe{\alpha})$ for $B_1 \in \mathcal{A}_{\leq n-2r_1 +1}$. Remark \ref{remark, kernel of Hankel matrices} also tells us that for each pair of characteristic polynomials $A_1 , A_2$, there are $q-1$ associated Hankel matrices (simply because multiplying our matrices by an element in $\mathbb{F}_q^*$ does not change the characteristic polynomials). Thus, we can replace $\mathbfe{\alpha} \in \strictmathcal{L}_{n-1} (r_1,0,r_1)$ with $A_2 (\mathbfe{\alpha}) \in \mathcal{M}_{n-r_1+1}$, but we must multiply by the factor $\frac{(q-1)}{\lvert \mathcal{A}_{\leq n-2r_1 +1} \rvert} = \frac{q-1}{q^{n-2r_1 +2}}$. However, this requires one last piece of justification. Namely, that our summand is independent of which representation of $A_2 (\mathbfe{\alpha})$ one takes. This ultimately follows from the conditions $B_1 + B_2 A_2 (\mathbfe{\alpha}) = UB$ and $C_1 + C_2 A_2 (\mathbfe{\alpha}) = VC$, and the ranges given for $B_1 , B_2 , B , C_1 , C_2 , C$. This justifies the change in the summation range. Let us now justify the change in the summand. \\

Consider $\Big\lvert \kernel H_{s' , s'+1} (\mathbfe{\alpha} \odot [U]_s ) \Big\rvert$. Similarly as in the proof of Lemma \ref{lemma, U reduction, 1 dimensional kernel case}, we have that a polynomial $B \in \mathcal{A}_{\leq s'}$ satisfies $[B]_{s'} \in \kernel H_{s' , s'+1} (\mathbfe{\alpha} \odot [U]_s )$ if and only if $UB = B_1 + B_2 A_2 (\mathbfe{\alpha})$ for some $B_1 \in \mathcal{A}_{\leq s'+s-r_1}$ and $B_2 \in \mathcal{A}_{\leq r_1 -s'-1}$. However, recall that we also have the condition $\strictpi (\mathbfe{\alpha} \odot [U]_s ) = 0$. Remark \ref{remark, pi_1 = 0 iff vector in kernel with no zero at end} tells us that $\strictpi (\mathbfe{\alpha} \odot [U]_s ) = 0$ if and only if there is $B \in \mathcal{M}_{s'}$ with $[B]_{s'} \in \kernel H_{s' , s'+1} (\mathbfe{\alpha} \odot [U]_s )$. Note that if $B \in \mathcal{M}_{s'}$, then the condition $B_1 + B_2 A_2 (\mathbfe{\alpha}) = UB$ will require that $B_2 \in \mathcal{M}_{r_1 -s'-1}$. So we have established that the sum
\begin{align*}
\sum_{\substack{B_1 \in \mathcal{A}_{\leq s'+s-r_1} \\ B_2 \in \mathcal{M}_{r_1 -s'-1} \\ B \in \mathcal{M}_{s'} \\ B_1 + B_2 A_2 (\mathbfe{\alpha}) = UB}} 1
\end{align*}
will be zero if $\strictpi (\mathbfe{\alpha} \odot [U]_s ) \geq 0$, as required. Although, when $\strictpi (\mathbfe{\alpha} \odot [U]_s ) = 0$, because of the additional restrictions on $B$ and $B_2$ to be monic, the sum is not actually equal to $\Big\lvert \kernel H_{s' , s'+1} (\mathbfe{\alpha} \odot [U]_s ) \Big\rvert$ which is what we originally wanted. However, Remark \ref{remark, pi_1 = 0 iff vector in kernel with no zero at end} also tells us that this can be remedied by multiplying by $q$. Finally, $\Big\lvert \kernel H_{t'+1 , t'+1} (\mathbfe{\alpha} \odot [V]_{t-1} ) \Big\rvert$ can be express as a similar sum, but it is simpler because we do not have a condition analogous to $\strictpi (\mathbfe{\alpha} \odot [U]_s ) = 0$. \\

Continuing, by substituting (\ref{statement, main theorem, kernels expressed as polynomial equations}) into (\ref{statement, main theorem, variance in terms of kernels of shortened alpha}), we obtain
\begin{align}
\begin{split} \label{statement, main theorem, after kernels expressed as polynomial equations}
&\frac{1}{q^n} \sum_{A \in \mathcal{M}_n } \bigg\lvert \Delta_{S_{U,V}} (A;<h) \bigg\rvert^2 \\
= &4 (q-1) \frac{q^{2h+s' + t'} }{q^{3n+1}} \sum_{r_1 = s'+1}^{n-h} q^{2 r_1}
	\sum_{A \in \mathcal{M}_{n-r_1+1}}
	\bigg( \sum_{\substack{B_1 \in \mathcal{A}_{\leq s'+s-r_1} \\ B_2 \in \mathcal{M}_{r_1 -s'-1} \\ B \in \mathcal{M}_{s'} \\ B_1 + B_2 A = UB}} 1 \bigg) 
	\bigg( \sum_{\substack{C_1 \in \mathcal{A}_{\leq t'+t-r_1 -1} \\ C_2 \in \mathcal{A}_{\leq r_1 -t'-2} \\ C \in \mathcal{A}_{\leq t'} \\ C_1 + C_2 A = VC}} 1 \bigg) .
\end{split}
\end{align}

We wish to use Dirichlet characters\footnote{See Section 1.4 of \cite{Yiasemides2020_PhDThesis_PostMinorCorr} for details on Dirichlet characters in function fields, including orthogonality relations.} of conductor $U$ to address the condition $B_2 A = UB - B_1$. To do so, we require $(B_1 , U)=1$. This is clearly not always the case, and so we will condition on the value of $(B_1 , U)$. Suppose $(B_1 , U) = U'$ and let $U_3$ be such that $U' U_3 = U$. First note that we require $\degree U' \leq \degree B_1 \leq s'+s-r_1$. Second, the equation $B_2 A = UB - B_1$ forces $\big( B_2 A , U \big)=U'$, and so we can write $U' = U_1 U_2$, where $(B_2 , U)=U_1$ and $U_2 \mid A$. Note that we require $\degree U_1 \leq \degree B_2 = r_1 -s' -1$. Applying the above, as well as a similar approach for the sum involving $C_1 , C_2 , C$, we obtain
\begin{align}
\begin{split} \label{statement, main theorem, just before dirichlet character application}
&\sum_{A \in \mathcal{M}_{n-r_1+1}}
	\bigg( \sum_{\substack{B_1 \in \mathcal{A}_{\leq s'+s-r_1} \\ B_2 \in \mathcal{M}_{\leq r_1 -s'-1} \\ B \in \mathcal{M}_{s'} \\ B_1 + B_2 A = UB}} 1 \bigg) 
		\bigg( \sum_{\substack{C_1 \in \mathcal{A}_{\leq t'+t-r_1 -1} \\ C_2 \in \mathcal{A}_{\leq r_1 -t'-2} \\ C \in \mathcal{A}_{\leq t'} \\ C_1 + C_2 A = VC}} 1 \bigg) \\
= & \sum_{\substack{U_1 \in \mathcal{M}_{\leq r_1-s'-1} \\U_2 \in \mathcal{M}_{\leq s'+s-r_1-\degree U_1} \\U_3 \in \mathcal{M} \\ U_1 U_2 U_3 = U }}
	\sum_{\substack{V_1 \in \mathcal{M}_{\leq r_1-t'-2} \\ V_2 \in \mathcal{M}_{\leq t'+t-r_1 -1-\degree V_1} \\ V_3 \in \mathcal{M} \\ V_1 V_2 V_3 = V }}
	\sum_{\substack{A \in \mathcal{M}_{n-r_1+1} \\ U_2 \mid A \\ V_2 \mid A}}
	\bigg( \sum_{\substack{B_1 \in \mathcal{A}_{\leq s'+s-r_1} \\ (B_1 , U) = U_1 U_2 \\ B_2 \in \mathcal{M}_{\leq r_1 -s'-1} \\ (B_2 , U) = U_1 \\ B \in \mathcal{M}_{s'} \\ B_1 + B_2 A = UB }} 1 \bigg)
	\bigg( \sum_{\substack{C_1 \in \mathcal{A}_{\leq t'+t-r_1 -1} \\ (C_1 , V) = V_1 V_2 \\ C_2 \in \mathcal{A}_{\leq r_1 -t' -2} \\ (C_2 , V) = V_1 \\ C \in \mathcal{A}_{\leq t'} \\ C_1 + C_2 A = VC }} 1 \bigg) \\
= & \sum_{\substack{U_1 \in \mathcal{M}_{\leq r_1-s'-1} \\U_2 \in \mathcal{M}_{\leq s'+s-r_1-\degree U_1} \\U_3 \in \mathcal{M} \\ U_1 U_2 U_3 = U }}
	\sum_{\substack{V_1 \in \mathcal{M}_{\leq r_1-t'-2} \\ V_2 \in \mathcal{M}_{\leq t'+t-r_1 -1-\degree V_1} \\ V_3 \in \mathcal{M} \\ V_1 V_2 V_3 = V }}
	\sum_{\substack{B_1 \in \mathcal{A}_{\leq s'+s-r_1-\degree U_1 U_2} \\ (B_1 , U_3) = 1 \\ B_2 \in \mathcal{M}_{\leq r_1 -s'-1-\degree U_1} \\ (B_2 , U_2 U_3) = 1 }} 
	\sum_{\substack{C_1 \in \mathcal{A}_{\leq t'+t-r_1 -1-\degree V_1 V_2} \\ (C_1 , V_3) = 1 \\ C_2 \in \mathcal{A}_{\leq r_1 -t' -2-\degree V_1} \\ (C_2 , V_2 V_3) = 1 }} 
	\sum_{\substack{A \in \mathcal{M}_{n-r_1+1-\degree U_2 V_2} \\ B \in \mathcal{M}_{s'} \\ C \in \mathcal{A}_{\leq t'} \\ B_1 + B_2 V_2 A = U_3 B \\ C_1 + C_2 U_2 A = V_3 C }} 1 .
\end{split}
\end{align}
The last summation can be addressed using Dirichlet characters. In what follows, for a general monic polynomial $W$, a sum over all Dirichlet characters of conductor $W$ is expressed as $\sum_{\chi \modulus W}$. We have,
\begin{align}
\begin{split} \label{statement, main theorem proof, final application of Dirichlet characters}
&\sum_{\substack{A \in \mathcal{M}_{n-r_1+1-\degree U_2 V_2} \\ B \in \mathcal{M}_{s'} \\ C \in \mathcal{A}_{\leq t'} \\ B_1 + B_2 V_2 A = U_3 B \\ C_1 + C_2 U_2 A = V_3 C }} 1 \\
= &\sum_{A \in \mathcal{M}_{n-r_1+1-\degree U_2 V_2}}
	\bigg( \frac{1}{\phi (U_3)} \sum_{\chi_1 \modulus U_3} \chi_1 (B_2 V_2 A) \conj\chi_1 (-B_1) \bigg)
	\bigg( \frac{1}{\phi (V_3)} \sum_{\chi_2 \modulus V_3} \chi_2 (C_2 U_2 A) \conj\chi_2 (-C_1) \bigg) \\
= &\frac{1}{\phi (U_3)} \frac{1}{\phi (V_3)} 
	\sum_{\substack{\chi_1 \modulus U_3 \\ \chi_2 \modulus V_3 }} 
	\chi_1 (B_2 V_2 ) \conj\chi_1 (-B_1) \chi_2 (C_2 U_2 ) \conj\chi_2 (-C_1)
	\sum_{A \in \mathcal{M}_{n-r_1+1-\degree U_2 V_2}} \chi_1 (A) \chi_2 (A) \\
= &\frac{q^{n-r_1+1}}{\lvert U_2 U_3 V_2 V_3 \rvert} ,
\end{split}
\end{align}
where, for the first equality we used the orthogonality relations. For the final equality, we used the fact that $\chi_1 \chi_2$ is a Dirichlet character of conductor $U_3 V_3$ (since $U_3 ,V_3$ are coprime), and so due to the orthogonality relations and the fact that $n-r_1 +1 - \degree U_2 V_2 > \degree U_3 V_3$, we can see that a non-zero contribution occurs only if $\chi_1 \chi_2$ is trivial (and thus $\chi_1 , \chi_2$ are trivial\footnote{To prove this, without obtaining further results on Dirichlet characters, one can prove the contrapositive statement, that if $\chi_1 , \chi_2$ are not both trivial, then $\chi_1 \chi_2$ is not trivial. Indeed, suppose $\chi_1$ is non-trivial and let $D$ be such that $ \chi_1 (D) \neq 0,1$. Since $(U_3 , V_3)=1$, there exists a polynomial $E$ such that $E \equiv D (\modulus U_3)$ and $E \equiv 1 (\modulus V_3)$. Thus, $\chi_1 \chi_2 (E) = \chi_2 (E) \neq 0,1$, and so $\chi_1 \chi_2$ is not trivial.}). Note that our result holds true even for the special cases where $U_3 = 1$ or $V_3 = 1$, including the special subcases where $B_1 = 0$ or $C_1 = 0$. Let us now apply (\ref{statement, main theorem proof, final application of Dirichlet characters}) to (\ref{statement, main theorem, just before dirichlet character application}) and consider the sum over $U_1 , U_2 , U_3$. A similar result applies for the sum over $V_1, V_2 , V_3$. We have
\begin{align*}
\sum_{\substack{U_1 \in \mathcal{M}_{\leq r_1-s'-1} \\U_2 \in \mathcal{M}_{\leq s'+s-r_1-\degree U_1} \\U_3 \in \mathcal{M} \\ U_1 U_2 U_3 = U }}
	\sum_{\substack{B_1 \in \mathcal{A}_{\leq s'+s-r_1-\degree U_1 U_2} \\ (B_1 , U_3) = 1 \\ B_2 \in \mathcal{M}_{\leq r_1 -s'-1-\degree U_1} \\ (B_2 , U_2 U_3) = 1 }} 
	\frac{1}{\lvert U_2 U_3 \rvert} 
= &\frac{1}{\lvert U \rvert} \sum_{\substack{U_1 \in \mathcal{M}_{\leq r_1-s'-1} \\U_2 \in \mathcal{M}_{\leq s'+s-r_1-\degree U_1} \\U_3 \in \mathcal{M} \\ U_1 U_2 U_3 = U }}
	\sum_{\substack{B_1 \in \mathcal{A}_{\leq s'+s-r_1} \\ (B_1 , U) = U_1 U_2 }}
	\sum_{\substack{B_2 \in \mathcal{M}_{\leq r_1 -s'-1} \\ (B_2 , U) = U_1 }} \lvert U_1 \rvert \\
= &\frac{1}{\lvert U \rvert} \sum_{U' U_3 = U}
	\sum_{\substack{B_1 \in \mathcal{A}_{\leq s'+s-r_1} \\ (B_1 , U) = U' }}
	\sum_{U_1 U_2 = U'}
	\sum_{\substack{B_2 \in \mathcal{M}_{\leq r_1 -s'-1} \\ (B_2 , U) = U_1 }} \lvert (B_2 , U) \rvert \\
= &\frac{1}{\lvert U \rvert} \sum_{U' U_3 = U}
	\sum_{\substack{B_1 \in \mathcal{A}_{\leq s'+s-r_1} \\ B_2 \in \mathcal{M}_{\leq r_1 -s'-1} \\ (B_1 , U) = U' \\ (B_2 , U) \mid (B_1 , U) }} \lvert (B_2 , U) \rvert \\
= &\frac{1}{\lvert U \rvert} 	\sum_{\substack{B_1 \in \mathcal{A}_{\leq s'+s-r_1} \\ B_2 \in \mathcal{M}_{\leq r_1 -s'-1} \\ (B_2 , U) \mid B_1 }} \lvert (B_2 , U) \rvert . 
\end{align*}
Similarly,
\begin{align*}
\sum_{\substack{V_1 \in \mathcal{M}_{\leq r_1-t'-2} \\ V_2 \in \mathcal{M}_{\leq t'+t-r_1 -1-\degree V_1} \\ V_3 \in \mathcal{M} \\ V_1 V_2 V_3 = V }}
	\sum_{\substack{C_1 \in \mathcal{A}_{\leq t'+t-r_1 -1-\degree V_1 V_2} \\ (C_1 , V_3) = 1 \\ C_2 \in \mathcal{A}_{\leq r_1 -t' -2-\degree V_1} \\ (C_2 , V_2 V_3) = 1 }}
	\frac{1}{\lvert V_2 V_3 \rvert} 
= \frac{1}{\lvert V \rvert} \sum_{\substack{C_1 \in \mathcal{A}_{\leq t'+t-r_1 -1} \\ C_2 \in \mathcal{A}_{\leq r_1 -t'-2} \\ (C_2 , V) \mid C_1 }} \lvert (C_2 , V) \rvert . 
\end{align*}
Hence, by applying these to (\ref{statement, main theorem, just before dirichlet character application}), we obtain
\begin{align*}
&\sum_{A_2 (\mathbfe{\alpha}) \in \mathcal{M}_{n-r_1+1}}
	\bigg( \sum_{\substack{B_1 \in \mathcal{A}_{\leq s'+s-r_1} \\ B_2 \in \mathcal{M}_{\leq r_1 -s'-1} \\ B \in \mathcal{M}_{s'} \\ B_1 + B_2 A_2 (\mathbfe{\alpha}) = UB}} 1 \bigg) 
		\bigg( \sum_{\substack{C_1 \in \mathcal{A}_{\leq t'+t-r_1 -1} \\ C_2 \in \mathcal{A}_{\leq r_1 -t'-2} \\ C \in \mathcal{A}_{\leq t'} \\ C_1 + C_2 A_2 (\mathbfe{\alpha}) = VC}} 1 \bigg) \\
= &\frac{q^{n-r_1+1}}{\lvert UV \rvert}
		\sum_{\substack{B_1 \in \mathcal{A}_{\leq s'+s-r_1} \\ B_2 \in \mathcal{M}_{\leq r_1 -s'-1} \\ (B_2 , U) \mid B_1 }} \lvert (B_2 , U) \rvert
		\sum_{\substack{C_1 \in \mathcal{A}_{\leq t'+t-r_1 -1} \\ C_2 \in \mathcal{A}_{\leq r_1 -t'-2} \\ (C_2 , V) \mid C_1 }} \lvert (C_2 , V) \rvert .
\end{align*}
Finally, applying this to (\ref{statement, main theorem, after kernels expressed as polynomial equations}), we obtain
\begin{align*}
&\frac{1}{q^n} \sum_{A \in \mathcal{M}_n } \bigg\lvert \Delta_{S_{U,V}} (A;<h) \bigg\rvert^2 \\
= &4 q^{h} \frac{q-1}{q^{\frac{1}{2}} \lvert UV \rvert^{\frac{1}{2}} } \sum_{r_1 = s'+1}^{n-h} q^{r_1 -(n-h)}
		\sum_{\substack{B_1 \in \mathcal{A}_{\leq s'+s-r_1} \\ B_2 \in \mathcal{M}_{\leq r_1 -s'-1} \\ (B_2 , U) \mid B_1 }} \lvert (B_2 , U) \rvert
		\sum_{\substack{C_1 \in \mathcal{A}_{\leq t'+t-r_1 -1} \\ C_2 \in \mathcal{A}_{\leq r_1 -t'-2} \\ (C_2 , V) \mid C_1 }} \lvert (C_2 , V) \rvert \\
= & q^h f_{U,V} (n,h) .
\end{align*}

Let us now prove the bound for $f_{U,V} (n,h)$. We have
\begin{align} \label{statement, main theorem, case 3, what must be bounded from case 2}
f_{U,V} (n,h)
= \frac{4(q-1)}{q^{\frac{1}{2}} \lvert UV \rvert^{\frac{1}{2}} } \sum_{r_1 = s'+1}^{n-h} q^{r_1 -(n-h)}
		\sum_{\substack{B_1 \in \mathcal{A}_{\leq s'+s-r_1} \\ B_2 \in \mathcal{M}_{\leq r_1 -s'-1} \\ (B_2 , U) \mid B_1 }} \lvert (B_2 , U) \rvert
		\sum_{\substack{C_1 \in \mathcal{A}_{\leq t'+t-r_1 -1} \\ C_2 \in \mathcal{A}_{\leq r_1 -t'-2} \\ (C_2 , V) \mid C_1 }} \lvert (C_2 , V) \rvert .
\end{align}
We note that $(B_2 , U) \mid B_1$ and $(B_2 , U) \mid B_2$, and so
\begin{align*}
\degree (B_2 , U)
\leq \min \{ \degree B_1 , \degree B_2 \}
\leq \min \{ s'+s-r_1 , r_1 -s'-1 \}
\leq \frac{\degree U}{2} -1 .
\end{align*}
Similarly, $\degree (C_2 , V) \leq \frac{\degree V -1}{2} -1$. So, we have 
\begin{align*}
&\sum_{\substack{B_1 \in \mathcal{A}_{\leq s'+s-r_1} \\ B_2 \in \mathcal{M}_{\leq r_1 -s'-1} \\ (B_2 , U) \mid B_1}} \lvert (B_2 , U) \rvert
= \sum_{\substack{U_1 \mid U \\ \degree U_1 \leq \frac{\degree U}{2} -1 }} \lvert U_1 \rvert
	\sum_{\substack{B_2 \in \mathcal{M}_{\leq r_1 -s'-1} \\ (B_2 , U) = U_1 }}
	\sum_{\substack{B_1 \in \mathcal{A}_{\leq s'+s-r_1} \\ U_1 \mid B_1 }} 1 \\
= &q^{s'+s-r_1 +1} \sum_{\substack{U_1 \mid U \\ \degree U_1 \leq \frac{\degree U}{2} -1 }}
	\sum_{\substack{B_2 \in \mathcal{M}_{\leq r_1 -s'-1} \\ (B_2 , U) = U_1 }} 1 
\leq q^{s} \sum_{\substack{U_1 \mid U \\ \degree U_1 \leq \frac{\degree U}{2} -1 }} \frac{1}{ \lvert U_1 \rvert} 
\leq \lvert U \rvert (\log_q \degree U) .
\end{align*}
Similarly, 
\begin{align*}
\sum_{\substack{C_1 \in \mathcal{A}_{\leq t'+t-r_1 -1} \\ C_2 \in \mathcal{A}_{\leq r_1 -t'-2} \\ (C_2 , U) \mid C_1 }} \lvert (C_2 , V) \rvert
\leq \lvert V \rvert (\log_q \degree V) .
\end{align*}
Applying this to (\ref{statement, main theorem, case 3, what must be bounded from case 2}) we obtain
\begin{align*}
&\frac{4(q-1)}{q^{\frac{1}{2}} \lvert UV \rvert^{\frac{1}{2}} } \sum_{r_1 = s'+1}^{n-h} q^{r_1 -(n-h)}
		\sum_{\substack{B_1 \in \mathcal{A}_{\leq s'+s-r_1} \\ B_2 \in \mathcal{M}_{\leq r_1 -s'-1} \\ (B_2 , U) \mid B_1 }} \lvert (B_2 , U) \rvert
		\sum_{\substack{C_1 \in \mathcal{A}_{\leq t'+t-r_1 -1} \\ C_2 \in \mathcal{A}_{\leq r_1 -t'-2} \\ (C_2 , V) \mid C_1 }} \lvert (C_2 , V) \rvert \\
\leq & 4 (q-1) q^{- \frac{1}{2}} \Big( \sum_{r_1 = s'+1}^{n-h} q^{r_1 - (n-h)}  \Big)
	\lvert UV \rvert^{\frac{1}{2}} 
	(\log_q \degree U) 
	(\log_q \degree V) \\
\leq &4 q^{\frac{1}{2}}
	\lvert UV \rvert^{\frac{1}{2}} 
	(\log_q \degree U)
	(\log_q \degree V) . \\
\end{align*}

\underline{\textbf{Case 3:}} $3 (\degree UV +1) \leq h < \min \{ s' , t' \} -1$. \\

As in Cases 1 and 2, consider the $\mathbfe{\alpha} \in \mathcal{L}_n^h$ that appear in (\ref{statement, variance as add char after mean square removed}). Remark \ref{remark, rho_1 values dependent on h} tells us that either $h+1 \leq \strictrho (\mathbfe{\alpha} ) \leq n_2 -1$ or $\strictrho (\mathbfe{\alpha}) = 0$. Certainly, as in Cases 1 and 2, $\strictrho (\mathbfe{\alpha}) = 0$ is a possibility; but now, we also have that the range $h+1 \leq \strictrho (\mathbfe{\alpha} ) \leq n_2 -1$ is non-empty. We will divide this range into two parts: The $\mathbfe{\alpha}$ that satisfy
\begin{align} \label{statement, main theorem, LOT second contribution range}
h+1 \leq \strictrho (\mathbfe{\alpha} ) \leq n_2 -1
\hspace{3em} \text{ and } \hspace{3em}
r (\mathbfe{\alpha}) \geq \min \{ s' , t' \} +1 ;
\end{align}
and the $\mathbfe{\alpha}$ that satisfy
\begin{align} \label{statement, main theorem, main term contribution range}
h+1 \leq \strictrho (\mathbfe{\alpha} ) \leq \min \{ s' , t' \}
\hspace{3em} \text{ and } \hspace{3em}
r (\mathbfe{\alpha}) \leq \min \{ s' , t' \} .
\end{align}

Step 1 below will address the error terms, which come from the cases $\strictrho (\mathbfe{\alpha}) = 0$ and (\ref{statement, main theorem, LOT second contribution range}) above. The former is essentially just Case 1 and 2 above, and so we will only need to provide a bound for Case 2 (since Case 1 contributes zero). The latter is somewhat difficult to evaluate asymptotically, but it can be bounded. Step 2 below will address the main term for this case, which comes from (\ref{statement, main theorem, main term contribution range}). \\

\underline{Step 1:} \\

Suppose $\strictrho (\mathbfe{\alpha}) = 0$. Similarly as in the previous cases, we have that $\mathbfe{\alpha} \in \strictmathcal{L}_n^h (r_1 , 0 , r_1)$ for some $2 \leq r_1 \leq n_1$. Case 1 tells us that if $r_1 \leq s'+1$, then the contribution is zero. By a slight adaptation of Case 2, we can see that the contribution of $s' + 2 \leq r_1 \leq n_1$ is 
\begin{align*}
q^{2h - (n_2 -1)} f_{U,V} (n , n_1 -1) 
\leq  4 q^{2h - n_2 + \frac{3}{2}} 
	\lvert UV \rvert^{\frac{1}{2}} 
	(\log_q \degree U)
	(\log_q \degree V).
\end{align*}

So, we have bounded the contribution from the cases where $\strictrho (\mathbfe{\alpha}) = 0$. Now let us bound the contribution from the cases where (\ref{statement, main theorem, LOT second contribution range}) holds. \\

To this end, suppose that $h +1 < \rho (\mathbfe{\alpha} ) \leq n_2 -1$ and $r (\mathbfe{\alpha}) \geq \min \{ s' , t' \} +1$. The contribution of these cases to (\ref{statement, variance as add char after mean square removed}) is
\begin{align}
\begin{split} \label{statement, main theorem proof, bounding min(s',t') < rho (alpha), Cauchy Schwarz}
&\frac{4 q^{2h}}{q^{2n+1}} \hspace{-1.75em} \sum_{\substack{\mathbfe{\alpha} \in \mathcal{L}_n^h : \\ h +1 < \rho (\mathbfe{\alpha} ) \leq n_2 -1 \\ r (\mathbfe{\alpha}) \geq \min \{ s' , t' \} +1 }} \hspace{-0.5em}
	\bigg\lvert \sum_{\substack{E \in \mathcal{M}_{s'} \\ F \in \mathcal{A}_{\leq t'} }}
		\psi \Big( [E]_{s'}^T H_{s'+1 , s'+1} (\mathbfe{\alpha}\odot [U]_s ) [E]_{s'} 
			+ [F]_{t'}^T H_{t'+1 , t'+1} (\mathbfe{\alpha} \odot [V]_t ) [F]_{t'} \Big) \bigg\rvert^2 \\
\leq &\frac{4 q^{2h}}{q^{2n+1}} 
	\bigg[ \sum_{\substack{\mathbfe{\alpha} \in \mathcal{L}_n^h : \\ r (\mathbfe{\alpha}) \geq \min \{ s' , t' \} +1 }}
		\bigg\lvert \sum_{E \in \mathcal{M}_{s'}}
			\psi \Big( [E]_{s'}^T H_{s'+1 , s'+1} (\mathbfe{\alpha}\odot [U]_s ) [E]_{s'} \Big) \bigg\rvert^4 \bigg]^{\frac{1}{2}} \\
	&\hspace{6em} \times \bigg[ \sum_{\substack{\mathbfe{\alpha} \in \mathcal{L}_n^h : \\ r (\mathbfe{\alpha}) \geq \min \{ s' , t' \} +1 }}
		\bigg\lvert \sum_{F \in \mathcal{A}_{\leq t'}}
			\psi \Big( [F]_{t'}^T H_{t'+1 , t'+1} (\mathbfe{\alpha} \odot [V]_t ) [F]_{t'} \Big) \bigg\rvert^4 \bigg]^{\frac{1}{2}} \\
\leq &\frac{4 q^{2h}}{q^{2n+1}} 
	\bigg[ q^s \sum_{\substack{\mathbfe{\beta} \in \mathcal{L}_{n-s}^{h-s} : \\ r (\mathbfe{\beta}) \geq \min \{ s' , t' \} +1 - s }}
		\bigg\lvert \sum_{E \in \mathcal{M}_{s'}}
			\psi \Big( [E]_{s'}^T H_{s'+1 , s'+1} (\mathbfe{\beta}) [E]_{s'} \Big) \bigg\rvert^4 \bigg]^{\frac{1}{2}} \\
	&\hspace{6em} \times \bigg[ q^t \sum_{\substack{\mathbfe{\beta} \in \mathcal{L}_{n- t}^{h-t} : \\ r (\mathbfe{\beta}) \geq \min \{ s' , t' \} +1 - t}}
		\bigg\lvert \sum_{F \in \mathcal{A}_{\leq t'}}
			\psi \Big( [F]_{t'}^T H_{t'+1 , t'+1} (\mathbfe{\beta}) [F]_{t'} \Big) \bigg\rvert^4 \bigg]^{\frac{1}{2}} .
\end{split}
\end{align}
The first relation uses the Cauchy-Schwarz inequality; and it also removes the condition $h +1 < \rho (\mathbfe{\alpha} ) \leq n_2 -1$ from the sum, which we can do because we are considering upper bounds of a sum of positive values. The second uses the fact that the map $\mathbfe{\alpha} \to \mathbfe{\alpha} \odot [U]_s$ is a linear map from $\mathcal{L}_n^h$ to $\mathcal{L}_{n-s}^{h-s}$, with the kernel of this linear map having $q^s$ elements in it. We also used the fact that $r (\mathbfe{\alpha} \odot [U]_s ) \geq \min \{ s' , t' \} +1 - s$, which requires more justification. For this, we note the following implications
\begin{align*}
&r (\mathbfe{\alpha} \odot [U]_s ) \geq \min \{ s' , t' \} +1 - s \\
\iff &\rank H_{s'+1 , s'+1} (\mathbfe{\alpha} \odot [U]_s ) \geq \min \{ s' , t' \} +1 - s \\
\iff &\dim \kernel H_{s'+1 , s'+1} (\mathbfe{\alpha} \odot [U]_s ) \leq \max \{ 0 , s' - t' \} + s \\
\iff &\dim \kernel H_{s'+1 , n-s'+1} (\mathbfe{\alpha}) T_{n-s'+1 , s'+1} ([U]_s) \leq \max \{ 0 , s' - t' \} + s \\
\Longleftarrow \hspace{0.25em} &\dim \kernel H_{s'+1 , n-s'+1} (\mathbfe{\alpha}) \leq \max \{ 0 , s' - t' \} + s +1 \\
\iff & \rank H_{s'+1 , n-s'+1} (\mathbfe{\alpha}) \geq \min \{ s' , t' \} +1 .
\end{align*}
The first implication follows by definition of $r (\mathbfe{\alpha} \odot [U]_s )$. The second and last simply use the fact that the dimension of the kernel is the number of columns minus the rank. The third follows from Remark \ref{remark, circulant Toeplitz multiplied by vector and by Hankel matrix}. The fourth follows from the fact that $T_{n-s'+1 , s'+1} ([U]_s)$ defines an injective map. Now, the last line is true, which follows from the fact that $r (\mathbfe{\alpha}) \geq \min \{ s' , t' \} +1$ and Remark \ref{remark, rank of Hankel matrix is minimum of rows, columns, and r}. Thus, the first line is true, as required. A similar justification applies for the sum involving $V$ in (\ref{statement, main theorem proof, bounding min(s',t') < rho (alpha), Cauchy Schwarz}). \\

Continuing from (\ref{statement, main theorem proof, bounding min(s',t') < rho (alpha), Cauchy Schwarz}), let us consider the sums over $E$ and $F$ separately. We have that 
\begin{align*}
&\sum_{\substack{\mathbfe{\beta} \in \mathcal{L}_{n-s}^{h-s} : \\ r (\mathbfe{\beta}) \geq \min \{ s' , t' \} +1 - s }}
		\bigg\lvert \sum_{E \in \mathcal{M}_{s'}}
			\psi \Big( [E]_{s'}^T H_{s'+1 , s'+1} (\mathbfe{\beta}) [E]_{s'} \Big) \bigg\rvert^4 \\
= &\sum_{r= \min \{ s' , t' \} +1-s}^{s'}
		\sum_{\mathbfe{\beta} \in \strictmathcal{L}_{n-s}^{h-s} (r,r,0) }
			\bigg\lvert \sum_{E \in \mathcal{M}_{s'}}
				\psi \Big( [E]_{s'}^T H_{s'+1 , s'+1} (\mathbfe{\beta}) [E]_{s'} \Big) \bigg\rvert^4 \\
	&+\sum_{r= \min \{ s' , t' \} +1-s}^{s'+1}
		\sum_{\mathbfe{\beta} \in \strictmathcal{L}_{n-s}^{h-s} (r,r-1,1) }
			\bigg\lvert \sum_{E \in \mathcal{M}_{s'}}
				\psi \Big( [E]_{s'}^T H_{s'+1 , s'+1} (\mathbfe{\beta}) [E]_{s'} \Big) \bigg\rvert^4 \\
= &\sum_{r= \min \{ s' , t' \} +1-s}^{s'}
		\sum_{\mathbfe{\beta} \in \strictmathcal{L}_{n-s}^{h-s} (r,r,0) }
			q^{2 (2s' -r)}
	+\sum_{r= \min \{ s' , t' \} +1-s}^{s'+1}
		\sum_{\mathbfe{\beta} \in \strictmathcal{L}_{n-s}^{h-s} (r,r-1,1) }
			q^{2 (2s' +1 -r)} ,
\end{align*}
where both equalities use the second result in Lemma \ref{lemma, quadratic form values over monics and nonmonics}. Note that the first sum over $r$ does not include $r=s'+1$, because $\strictmathcal{L}_{n-s}^{h-s} (s'+1,s'+1,0) = \strictmathcal{L}_{n-s}^{h-s} \big( \frac{n-s}{2}+1,\frac{n-s}{2}+1,0 \big)$ is empty, as mentioned after Definition \ref{definition, notation for sets in terms of n,h,rho,pi,r}. Now let us use Lemma \ref{lemma, number of sequences of given rhopi form} to count the number of $\mathbfe{\beta}$. We have,
\begin{align}
\begin{split} \label{statement, main theorem proof, bounding min(s',t') < rho (alpha), after Cauchy Schwarz, E sum}
&\sum_{\substack{\mathbfe{\beta} \in \mathcal{L}_{n-s}^{h-s} : \\ r (\mathbfe{\beta}) \geq \min \{ s' , t' \} +1 - s }}
		\bigg\lvert \sum_{E \in \mathcal{M}_{s'}}
			\psi \Big( [E]_{s'}^T H_{s'+1 , s'+1} (\mathbfe{\beta}) [E]_{s'} \Big) \bigg\rvert^4 \\
= &\sum_{r= \min \{ s' , t' \} +1-s}^{s'} (q-1) q^{2r -h+s -1} q^{2 (2s' -r)}
	+ \sum_{r= \min \{ s' , t' \} +1-s}^{s'+1} (q-1)^2 q^{2r-h+s-3} q^{2 (2s' +1 -r)} \\
\leq & q^{2n-h-s+1} \big( s+1 - \max \{ 0 , s'-t' \} \big) .
\end{split}
\end{align}
We apply a similar approach to the sum over $F$ in (\ref{statement, main theorem proof, bounding min(s',t') < rho (alpha), Cauchy Schwarz}), although since $F$ is not restricted to monics, we use the first part of Lemma \ref{lemma, quadratic form values over monics and nonmonics} instead of the second. We obtain
\begin{align}
\begin{split} \label{statement, main theorem proof, bounding min(s',t') < rho (alpha), after Cauchy Schwarz, F sum}
&\sum_{\substack{\mathbfe{\beta} \in \mathcal{L}_{n- t}^{h-t} : \\ r (\mathbfe{\beta}) \geq \min \{ s' , t' \} +1 - t}}
		\bigg\lvert \sum_{F \in \mathcal{A}_{\leq t'}}
			\psi \Big( [F]_{t'}^T H_{t'+1 , t'+1} (\mathbfe{\beta}) [F]_{t'} \Big) \bigg\rvert^4 \\
= &\sum_{r= \min \{ s' , t' \} +1-t}^{t' +1}
	\sum_{\mathbfe{\beta} \in \mathcal{L}_{n- t}^{h-t} (r)}
		\bigg\lvert \sum_{F \in \mathcal{A}_{\leq t'}}
			\psi \Big( [F]_{t'}^T H_{t'+1 , t'+1} (\mathbfe{\beta}) [F]_{t'} \Big) \bigg\rvert^4 \\
= &\sum_{r= \min \{ s' , t' \} +1-t}^{t' +1}
	\sum_{\mathbfe{\beta} \in \mathcal{L}_{n- t}^{h-t} (r)} q^{2(2t' + 2 -r)} \\
= &\bigg( \sum_{r= \min \{ s' , t' \} +1-t}^{t'}
	(q^2 -1) q^{2r-h+t-2} q^{2(2t' + 2 -r)} \bigg)
	+ (q-1) q^{n-h} q^{2t' + 2} \\
\leq &q^{2n-h-t+4} \big( t+1 - \max \{ 0 , t'-s' \} \big) .
\end{split}
\end{align}
Finally, applying (\ref{statement, main theorem proof, bounding min(s',t') < rho (alpha), after Cauchy Schwarz, E sum}) and (\ref{statement, main theorem proof, bounding min(s',t') < rho (alpha), after Cauchy Schwarz, F sum}) to (\ref{statement, main theorem proof, bounding min(s',t') < rho (alpha), Cauchy Schwarz}) gives
\begin{align*}
&\frac{4 q^{2h}}{q^{2n+1}} \hspace{-1.75em} \sum_{\substack{\mathbfe{\alpha} \in \mathcal{L}_n^h : \\ r (\mathbfe{\alpha}) \geq \min \{ s' , t' \} +1 }} \hspace{-0.5em}
	\bigg\lvert \sum_{\substack{E \in \mathcal{M}_{s'} \\ F \in \mathcal{A}_{\leq t'} }}
		\psi \Big( [E]_{s'}^T H_{s'+1 , s'+1} (\mathbfe{\alpha}\odot [U]_s ) [E]_{s'} 
			+ [F]_{t'}^T H_{t'+1 , t'+1} (\mathbfe{\alpha} \odot [V]_t ) [F]_{t'} \Big) \bigg\rvert^2 \\
\ll & q^{h+\frac{3}{2}} \max \{ \degree U , \degree V \} ,
\end{align*}
as required. \\

\underline{Step 2:} \\

We will now consider the contribution to (\ref{statement, variance as add char after mean square removed}) from the range (\ref{statement, main theorem, main term contribution range}), which will give us the main term. We wish to evaluate
\begin{align*}
\frac{4 q^{2h}}{q^{2n+1}} \hspace{-1.75em} \sum_{\substack{\mathbfe{\alpha} \in \mathcal{L}_n^h : \\ h+1 \leq \strictrho (\mathbfe{\alpha} ) \leq \min \{ s' , t' \} \\ r (\mathbfe{\alpha}) \leq \min \{ s' , t' \} \\ \strictpi (\mathbfe{\alpha} \odot [U]_s) \leq 1 }}
	\bigg\lvert \sum_{\substack{E \in \mathcal{M}_{s'} \\ F \in \mathcal{A}_{\leq t'} }}
		\psi \Big( [E]_{s'}^T H_{s'+1 , s'+1} (\mathbfe{\alpha}\odot [U]_s ) [E]_{s'} 
			+ [F]_{t'}^T H_{t'+1 , t'+1} (\mathbfe{\alpha} \odot [V]_t ) [F]_{t'} \Big) \bigg\rvert^2 .
\end{align*}
As before, the condition $\strictpi (\mathbfe{\alpha} \odot [U]_s) \leq 1$ follows from the second result in Lemma \ref{lemma, quadratic form values over monics and nonmonics}. Although, because $r (\mathbfe{\alpha} ) \leq \min \{ s' , t' \}$ and $\degree U = s$, Lemma \ref{lemma, U reduction, 1 dimensional kernel case} tells us that\break $\strictpi (\mathbfe{\alpha} \odot [U]_s) = \strictpi (\mathbfe{\alpha})$. That is, we can impose the additional condition $\strictpi (\mathbfe{\alpha}) \leq 1$ without affecting the final result. Furthermore, note that the two conditions $h+1 \leq \strictrho (\mathbfe{\alpha} ) \leq \min \{ s' , t' \}$ and $\strictpi (\mathbfe{\alpha}) \leq 1$ imply the condition $r (\mathbfe{\alpha}) \leq \min \{ s' , t' \} +1$. Thus, we could remove the third condition $r (\mathbfe{\alpha}) \leq \min \{ s' , t' \}$, were it not for the additional case where $r (\mathbfe{\alpha}) = \min \{ s' , t' \} +1$. However, the contribution of $r (\mathbfe{\alpha}) = \min \{ s' , t' \} +1$ is lower order, and has already been included in the bounds we obtained in Step 1. Thus, for convenience, we can include it without affecting the final result. Thus, we wish to evaluate
\begin{align*}
\frac{4 q^{2h}}{q^{2n+1}} \hspace{-1.75em} \sum_{\substack{\mathbfe{\alpha} \in \mathcal{L}_n^h : \\ h+1 \leq \strictrho (\mathbfe{\alpha} ) \leq \min \{ s' , t' \} \\ \strictpi (\mathbfe{\alpha}) \leq 1 \\ \strictpi (\mathbfe{\alpha} \odot [U]_s) \leq 1 }}
	\bigg\lvert \sum_{\substack{E \in \mathcal{M}_{s'} \\ F \in \mathcal{A}_{\leq t'} }}
		\psi \Big( [E]_{s'}^T H_{s'+1 , s'+1} (\mathbfe{\alpha}\odot [U]_s ) [E]_{s'} 
			+ [F]_{t'}^T H_{t'+1 , t'+1} (\mathbfe{\alpha} \odot [V]_t ) [F]_{t'} \Big) \bigg\rvert^2 .
\end{align*}
Applying Lemma \ref{lemma, quadratic form values over monics and nonmonics} gives
\begin{align*}
&\sum_{\substack{\mathbfe{\alpha} \in \mathcal{L}_n^h : \\ h+1 \leq \strictrho (\mathbfe{\alpha} ) \leq \min \{ s' , t' \} \\ \strictpi (\mathbfe{\alpha}) \leq 1 \\ \strictpi (\mathbfe{\alpha} \odot [U]_s) \leq 1 }}
	\bigg\lvert \sum_{\substack{E \in \mathcal{M}_{s'} \\ F \in \mathcal{A}_{\leq t'} }}
		\psi \Big( [E]_{s'}^T H_{s'+1 , s'+1} (\mathbfe{\alpha}\odot [U]_s ) [E]_{s'} 
			+ [F]_{t'}^T H_{t'+1 , t'+1} (\mathbfe{\alpha} \odot [V]_t ) [F]_{t'} \Big) \bigg\rvert^2 \\
= &\sum_{\substack{\mathbfe{\alpha} \in \mathcal{L}_n^h : \\ h+1 \leq \strictrho (\mathbfe{\alpha} ) \leq \min \{ s' , t' \} \\ \strictpi (\mathbfe{\alpha}) \leq 1 \\ \strictpi (\mathbfe{\alpha} \odot [U]_s) \leq 1 }}
	q^{s'+t' + \strictpi (\mathbfe{\alpha} \odot [U]_s) }
	\Big\lvert \kernel H_{s'+1 , s'+1} (\mathbfe{\alpha}\odot [U]_s ) \Big\rvert
	\Big\lvert \kernel H_{t'+1 , t'+1} (\mathbfe{\alpha}\odot [V]_t ) \Big\rvert .
\end{align*}
Now, almost identically as in Case 2, we can consider $\mathbfe{\alpha}'$ instead of $\mathbfe{\alpha}$, and this simplifies the expression. Ultimately, we obtain
\begin{align*}
&\sum_{\substack{\mathbfe{\alpha} \in \mathcal{L}_n^h : \\ h+1 \leq \strictrho (\mathbfe{\alpha} ) \leq \min \{ s' , t' \} \\ \strictpi (\mathbfe{\alpha}) \leq 1 \\ \strictpi (\mathbfe{\alpha} \odot [U]_s) \leq 1 }}
	\bigg\lvert \sum_{\substack{E \in \mathcal{M}_{s'} \\ F \in \mathcal{A}_{\leq t'} }}
		\psi \Big( [E]_{s'}^T H_{s'+1 , s'+1} (\mathbfe{\alpha}\odot [U]_s ) [E]_{s'} 
			+ [F]_{t'}^T H_{t'+1 , t'+1} (\mathbfe{\alpha} \odot [V]_t ) [F]_{t'} \Big) \bigg\rvert^2 \\
= &q^{s'+t'} \sum_{\substack{\mathbfe{\alpha} \in \mathcal{L}_{n-1}^h : \\ h+1 \leq \strictrho (\mathbfe{\alpha} ) \leq \min \{ s' , t' \} \\ \strictpi (\mathbfe{\alpha}) = 0 \\ \strictpi (\mathbfe{\alpha} \odot [U]_s) = 0 }}
	\Big\lvert \kernel H_{s' , s'+1} (\mathbfe{\alpha} \odot [U]_s ) \Big\rvert
	\Big\lvert \kernel H_{t'+1 , t'+1} (\mathbfe{\alpha} \odot [V]_{t-1} ) \Big\rvert \\
= &q^{s'+t'} \sum_{\substack{\mathbfe{\alpha} \in \mathcal{L}_{n-1}^h : \\ h+1 \leq \strictrho (\mathbfe{\alpha} ) \leq \min \{ s' , t' \} \\ \strictpi (\mathbfe{\alpha}) = 0 }}
	\Big\lvert \kernel H_{s' , s'+1} (\mathbfe{\alpha} \odot [U]_s ) \Big\rvert
	\Big\lvert \kernel H_{t'+1 , t'+1} (\mathbfe{\alpha} \odot [V]_{t-1} ) \Big\rvert \\
= &q^{s'+t'} \sum_{r=h+1}^{ \min \{ s' , t' \} } 
	\sum_{\mathbfe{\alpha} \in \strictmathcal{L}_{n-1}^h (r,r,0) }
	\Big\lvert \kernel H_{s' , s'+1} (\mathbfe{\alpha} \odot [U]_s ) \Big\rvert
	\Big\lvert \kernel H_{t'+1 , t'+1} (\mathbfe{\alpha} \odot [V]_{t-1} ) \Big\rvert ,
\end{align*}
where, for the second equality, we removed the condition $\strictpi (\mathbfe{\alpha} \odot [U]_s) = 0$ because, as before, it follows from the fact that $\strictpi (\mathbfe{\alpha} \odot [U]_s) = \strictpi (\mathbfe{\alpha})$ (which in turn follows from Lemma \ref{lemma, U reduction, 1 dimensional kernel case}). Now, when $\mathbfe{\alpha} \in \strictmathcal{L}_{n-1}^h (r,r,0)$ with $r \leq \min \{ s' , t' \}$, Lemma \ref{lemma, U reduction, 1 dimensional kernel case} tells us that 
\begin{align*}
r (\mathbfe{\alpha} \odot [U]_s)
	= &r - \degree \big( A_1 (\mathbfe{\alpha}) , U \big) , \\
r (\mathbfe{\alpha} \odot [V]_{t-1})
	= &r - \degree \big( A_1 (\mathbfe{\alpha}) , V \big) ;
\end{align*}
and so
\begin{align*}
\Big\lvert \kernel H_{s' , s'+1} (\mathbfe{\alpha} \odot [U]_s ) \Big\rvert
	= &q^{s'+1 - r} \lvert \big( A_1 (\mathbfe{\alpha}) , U \big) \rvert , \\
\Big\lvert \kernel H_{t'+1 , t'+1} (\mathbfe{\alpha} \odot [V]_{t-1} ) \Big\rvert
	= &q^{t'+1 - r} \lvert \big( A_1 (\mathbfe{\alpha}) , V \big) \rvert .
\end{align*}
Hence, 
\begin{align}
\begin{split} \label{statement, main theorem proof, W sum}
&\sum_{\substack{\mathbfe{\alpha} \in \mathcal{L}_n^h : \\ h+1 \leq \strictrho (\mathbfe{\alpha} ) \leq \min \{ s' , t' \} \\ \strictpi (\mathbfe{\alpha}) \leq 1 \\ \strictpi (\mathbfe{\alpha} \odot [U]_s) \leq 1 }}
	\bigg\lvert \sum_{\substack{E \in \mathcal{M}_{s'} \\ F \in \mathcal{A}_{\leq t'} }}
		\psi \Big( [E]_{s'}^T H_{s'+1 , s'+1} (\mathbfe{\alpha}\odot [U]_s ) [E]_{s'} 
			+ [F]_{t'}^T H_{t'+1 , t'+1} (\mathbfe{\alpha} \odot [V]_t ) [F]_{t'} \Big) \bigg\rvert^2 \\
= &q^{2(s'+t' +1)} \sum_{r=h+1}^{ \min \{ s' , t' \} } q^{-2r}
	\sum_{\mathbfe{\alpha} \in \strictmathcal{L}_{n-1}^h (r,r,0) }
	\lvert \big( A_1 (\mathbfe{\alpha}) , U \big) \rvert
	\lvert \big( A_1 (\mathbfe{\alpha}) , V \big) \rvert \\
= &q^{2(s'+t' +1)} \sum_{\substack{U_1 \mid U \\ V_1 \mid V }} \lvert U_1 V_1 \rvert
	\sum_{r=h+1}^{ \min \{ s' , t' \} } q^{-2r}
	\sum_{\substack{\mathbfe{\alpha} \in \strictmathcal{L}_{n-1}^h (r,r,0) \\ \big( A_1 (\mathbfe{\alpha}) , U \big) = U_1 \\ \big( A_1 (\mathbfe{\alpha}) , V \big) = V_1 }} 1 \\
= &q^{2(s'+t' +1)} \sum_{W_1 \mid W} \lvert W_1 \rvert
	\sum_{r=h+1}^{ \min \{ s' , t' \} } q^{-2r}
	\sum_{\substack{\mathbfe{\alpha} \in \strictmathcal{L}_{n-1}^h (r,r,0) \\ \big( A_1 (\mathbfe{\alpha}) , W \big) = W_1 }} 1 \\
= &q^{2(s'+t' +1)} \sum_{W_1 \mid W} \lvert W_1 \rvert
	\sum_{r=h+1}^{ \min \{ s' , t' \} } q^{-2r}
	\sum_{\substack{A \in \mathcal{M}_r \\ B \in \mathcal{A}_{<r-h} \\ (A,B) =1 \\ (A,W) = W_1}} 1.
\end{split}
\end{align}
For the third equality we are writing $W=UV$ and $W_1 = U_1 V_1$, and we have made use of the fact that $U,V$ are coprime. For the final equality we used lemma \ref{lemma, bijection from quasiregular Hankel matrices to M_r times A_<r}. Now, we have that
\begin{align*}
&\sum_{W_1 \mid W} \lvert W_1 \rvert
	\sum_{r=h+1}^{ \min \{ s' , t' \} } q^{-2r}
	\sum_{\substack{A \in \mathcal{M}_r \\ B \in \mathcal{A}_{<r-h} \\ (A,B) =1 \\ (A,W) = W_1}} 1 \\
= &\sum_{\substack{W_2 W_3 = W \\ (W_2 , W_3) =1 }}
	\sum_{W_1 W_1' = W_2} \lvert W_1 \rvert
	\sum_{r=h+1}^{ \min \{ s' , t' \} } q^{-2r}
	\sum_{\substack{A \in \mathcal{M}_r \\ B \in \mathcal{A}_{<r-h} \\ (A,B) =1 \\ (A,W_2) = W_1 \\ \rad (B,W) = \rad W_3}} 1 \\
= &\sum_{\substack{W_2 W_3 = W \\ (W_2 , W_3) =1 }}
	\sum_{W_1 W_1' = W_2} \lvert W_1 \rvert
	\sum_{r=h+1}^{ \min \{ s' , t' \} } q^{-2r}
	\sum_{\substack{B \in \mathcal{A}_{<r-h} \backslash \{ 0 \} \\ \rad (B,W) = \rad W_3}}
	\sum_{\substack{A \in \mathcal{M}_r \\ (A,B) =1 \\ (A,W_2) = W_1 }} 1 . 
\end{align*}
For the first equality, we conditioned on the value of $\rad (B,W)$. Note that the three conditions $(A,B)=1$, $\rad (B,W) = \rad W_3$, and $W_2 W_3 = W$ with $(W_2 , W_3) =1$, imply that $(A,W)$ must divide $W_2$, and this explains the second sum on the second line and the condition $(A,W_2) = W_1$. Now, we have that
\begin{align*}
\sum_{\substack{A \in \mathcal{M}_r \\ (A,B) =1 \\ (A,W_2) = W_1 }} 1
= \sum_{\substack{A \in \mathcal{M}_{r-\degree W_1} \\ (A,B) =1 \\ (A,W_1') = 1 }} 1
= q^{r - \degree W_1} \frac{\phi (B)}{\lvert B \rvert} \frac{\phi (W_1')}{\lvert W_1' \rvert} .
\end{align*}
For this to hold, we are using the fact that $B$ and $W_1'$ are coprime, and we also require that $r - \degree W_1 \geq \degree B + \degree W_1'$, which follows from
\begin{align*}
r > (r-h-1) + h \geq \degree B + h \geq \degree B + \degree UV \geq \degree B + \degree W_1 W_1' .
\end{align*}
Thus,
\begin{align}
\begin{split} \label{statement, main theorem proof, W sum as phi B/B}
&\sum_{W_1 \mid W} \lvert W_1 \rvert
	\sum_{r=h+1}^{ \min \{ s' , t' \} } q^{-2r}
	\sum_{\substack{A \in \mathcal{M}_r \\ B \in \mathcal{A}_{<r-h} \\ (A,B) =1 \\ (A,W) = W_1}} 1 \\
&= \sum_{\substack{W_2 W_3 = W \\ (W_2 , W_3) =1 }} 
	\sum_{W_1 W_1' = W_2} \frac{\phi (W_1')}{\lvert W_1' \rvert}
	\sum_{r=h+1}^{ \min \{ s' , t' \} } q^{-r}
	\sum_{\substack{B \in \mathcal{A}_{<r-h} \backslash \{ 0 \} \\ \rad (B,W) = \rad W_3}}
		\frac{\phi (B)}{\lvert B \rvert} .
\end{split}
\end{align}
We note that
\begin{align*}
&\sum_{r=h+1}^{ \min \{ s' , t' \} } q^{-r}
	\sum_{\substack{B \in \mathcal{A}_{<r-h} \backslash \{ 0 \} \\ \rad (B,W) = \rad W_3}}
	\frac{\phi (B)}{\lvert B \rvert} \\
= &q^{-h-1} \sum_{r=0}^{ \min \{ s' , t' \} -h-1} q^{-r}
	\sum_{\substack{B \in \mathcal{A}_{\leq r} \backslash \{ 0 \} \\ \rad (B,W) = \rad W_3}}
	\frac{\phi (B)}{\lvert B \rvert} \\
= &q^{-h-1} \sum_{\substack{B \in \mathcal{A}_{\leq \min \{ s' , t' \} -h-1} \backslash \{ 0 \} \\ \rad (B,W) = \rad W_3}}
	\bigg( \sum_{r=\degree B}^{ \min \{ s' , t' \} -h-1} q^{-r} \bigg) \frac{\phi (B)}{\lvert B \rvert} \\
= &\frac{q^{-h-1}}{1-q^{-1}} \sum_{\substack{B \in \mathcal{A}_{\leq \min \{ s' , t' \} -h-1} \backslash \{ 0 \} \\ \rad (B,W) = \rad W_3}}
	\frac{\phi (B)}{\lvert B \rvert^2} - q^{h+1-\min \{ s' , t' \} } \frac{\phi (B)}{\lvert B \rvert} \\
= &(1-q^{-1}) q^{-h} \Bigg[
	\Big( \frac{n}{2} -h + O (\degree W) \Big) 
	\prod_{P \mid W} 
		(1 + \lvert P \rvert^{-1})^{-1}
	\prod_{P \mid W_3} 
		\lvert P \rvert^{-1}
	\hspace{1em} + O (1) + O_\epsilon \Big( q^{-\frac{k}{2}} \lvert W \rvert^{\epsilon} \Big) \Bigg] ,
\end{align*}
where we have used Lemma \ref{lemma, phi (B)/B^2 sum, perron application} for the sum involving $\frac{\phi (B)}{\lvert B \rvert^2}$, and we have applied a trivial bound for the sum involving $\frac{\phi (B)}{\lvert B \rvert}$. We apply this to (\ref{statement, main theorem proof, W sum as phi B/B}) and then (\ref{statement, main theorem proof, W sum}), and after some rearrangement, and simplification of the error terms, we obtain
\begin{align*}
&\frac{4 q^{2h}}{q^{2n+1}} \hspace{-2em} \sum_{\substack{\mathbfe{\alpha} \in \mathcal{L}_n^h : \\ h+1 \leq \strictrho (\mathbfe{\alpha} ) \leq \min \{ s' , t' \} \\ \strictpi (\mathbfe{\alpha}) \leq 1 \\ \strictpi (\mathbfe{\alpha} \odot [U]_s) \leq 1 }}
	\bigg\lvert \sum_{\substack{E \in \mathcal{M}_{s'} \\ F \in \mathcal{A}_{\leq t'} }}
		\psi \Big( [E]_{s'}^T H_{s'+1 , s'+1} (\mathbfe{\alpha}\odot [U]_s ) [E]_{s'} 
			+ [F]_{t'}^T H_{t'+1 , t'+1} (\mathbfe{\alpha} \odot [V]_t ) [F]_{t'} \Big) \bigg\rvert^2 \\
= &4 (1-q^{-1}) q^h M (U,V) \Big( \frac{n}{2} -h \Big) 
	+ O_{\epsilon} (q^h \lvert UV \rvert^{-1+\epsilon}) ;
\end{align*}
where, for a non-zero polynomial $A$, we define $e_P (A)$ to be the maximal integer such that $P^{e_P (A)} \mid A$; and
\begin{align*}
M (U,V)
:= \lvert UV \rvert^{-1} 
	\prod_{P \mid UV} \Bigg( 1 + \bigg( \frac{1- \lvert P \rvert^{-1}}{1+ \lvert P \rvert^{-1}} \bigg)  e_P (UV) \Bigg) .
\end{align*}
\end{proof}

\begin{lemma} \label{lemma, phi (B)/B^2 sum, perron application}
Let $W \in \mathcal{M}$, and let $W_1 , W_2 \in \mathcal{M}$ be such that $W = W_1 W_2$ and $(W_1 , W_2)=1$. Let $k$ be a positive integer. Then,
\begin{align*}
&\sum_{\substack{B \in \mathcal{A}_{\leq k} \backslash \{ 0 \} \\ \rad (B,W) = \rad W_3}} 
	\frac{\phi (B)}{\lvert B \rvert^2} \\
&= \frac{(q-1)^2}{q}
	\Big( k + O ( \log \degree W) \Big) 
	\prod_{P \mid W} 
		(1 + \lvert P \rvert^{-1})^{-1}
	\prod_{P \mid W_3} 
		\lvert P \rvert^{-1}
	\hspace{1em} + O_\epsilon \Big( q^{-\frac{k}{2} +1} \lvert W \rvert^{\epsilon} \Big) .
\end{align*}
\end{lemma}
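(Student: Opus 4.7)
The plan is to compute the sum by recognising it as the $k$-th partial-sum coefficient of a convolution of two multiplicative Dirichlet series, and to extract the main term from the double pole at $u = 1$ that arises after building in the partial-sum weight $(1-u)^{-1}$.

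First, I would reduce to monic $B$. Since $\phi/\lvert\cdot\rvert^2$ and the condition $\rad(B,W) = \rad W_3$ are invariant under multiplication by units of $\mathbb{F}_q^*$, the original sum equals $(q-1)$ times the analogous sum over monic $B \in \mathcal{M}_{\leq k}$. I would then factor $B = B_0 B_1$, where $B_0 \in \mathcal{M}$ is supported exactly on the primes of $W_3$ (each of which divides $B_0$) and $B_1 \in \mathcal{M}$ is coprime to $W$. This factorisation is unique, and multiplicativity gives
\begin{equation*}
\sum_{\substack{B \in \mathcal{M}_{\leq k} \\ \rad(B,W) = \rad W_3}} \frac{\phi(B)}{\lvert B \rvert^2}
= \sum_{\substack{B_0, B_1 \in \mathcal{M} \\ \rad B_0 = \rad W_3 \\ (B_1, W) = 1 \\ \degree B_0 + \degree B_1 \leq k}} \frac{\phi(B_0)}{\lvert B_0 \rvert^2} \frac{\phi(B_1)}{\lvert B_1 \rvert^2}.
\end{equation*}

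Second, I would introduce generating functions $D(u)$ and $F(u)$ for the $B_1$- and $B_0$-sums and compute their Euler products. Using the two identities $\prod_P (1 - u^{\degree P}/\lvert P \rvert^j)^{-1} = (1 - u/q^{j-1})^{-1}$ for $j=1,2$, I obtain
\begin{equation*}
D(u) = \frac{1 - u/q}{1 - u} \prod_{P \mid W} \frac{1 - u^{\degree P}/\lvert P \rvert}{1 - u^{\degree P}/\lvert P \rvert^2}, \qquad F(u) = \prod_{P \mid W_3} \frac{(1 - \lvert P \rvert^{-1}) u^{\degree P}/\lvert P \rvert}{1 - u^{\degree P}/\lvert P \rvert}.
\end{equation*}
The desired partial sum is $[u^k] G(u)$, where $G(u) := D(u)F(u)/(1-u)$. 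After the manifest cancellations between $F$ and the $W_3$-part of $D$, this simplifies to $G(u) = A(u)/(1-u)^2$ with
\begin{equation*}
A(u) = (1-u/q) \prod_{P \mid W_2} \frac{1 - u^{\degree P}/\lvert P \rvert}{1 - u^{\degree P}/\lvert P \rvert^2} \prod_{P \mid W_3} \frac{(1-\lvert P \rvert^{-1})u^{\degree P}/\lvert P \rvert}{1 - u^{\degree P}/\lvert P \rvert^2}.
\end{equation*}

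Third, I would extract the main term from the double pole at $u=1$. Since each factor $(1 - u^{\degree P}/\lvert P \rvert^2)^{-1}$ has its nearest singularity on $\lvert u \rvert = q^2$, the function $A(u)$ is holomorphic in $\lvert u \rvert < q^2$. Writing $A(u) = A(1) + A'(1)(u-1) + (u-1)^2 \tilde A(u)$ yields $G(u) = A(1)(1-u)^{-2} - A'(1)(1-u)^{-1} + \tilde A(u)$, so $[u^k] G(u) = A(1)(k+1) - A'(1) + [u^k] \tilde A(u)$. A direct evaluation using $(1-\lvert P \rvert^{-1})/(1-\lvert P \rvert^{-2}) = (1+\lvert P \rvert^{-1})^{-1}$ gives
\begin{equation*}
A(1) = \frac{q-1}{q} \prod_{P \mid W} (1+\lvert P \rvert^{-1})^{-1} \prod_{P \mid W_3} \lvert P \rvert^{-1},
\end{equation*}
which matches the claimed main term after multiplication by the outer $(q-1)$ from Step~1. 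Logarithmic differentiation bounds $A'(1)/A(1)$ by a sum over $P \mid W$ of quantities like $\degree P \cdot \lvert P \rvert^2/(\lvert P \rvert^2 - 1)$, which is absorbed into the $O(\log \degree W)$ correction inside the parentheses of the stated main term.

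Finally, I would bound the tail $[u^k]\tilde A(u)$ by Cauchy's formula on $\lvert u \rvert = q^{1/2}$. On this circle each Euler factor of $A(u)$ is of the form $1 + O(\lvert P \rvert^{-1/2})$, so a standard divisor-type estimate $\log \prod_{P \mid W}(1 + c\lvert P \rvert^{-1/2}) \ll_\epsilon \epsilon \log \lvert W \rvert$ yields $\max_{\lvert u \rvert = q^{1/2}} \lvert \tilde A(u)\rvert \ll_\epsilon \lvert W \rvert^\epsilon$, hence $[u^k]\tilde A(u) \ll_\epsilon q^{-k/2} \lvert W \rvert^\epsilon$. Reinstating the factor $q-1$ from Step~1 gives the lemma. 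The main obstacle is this last step: carefully tracking the $W$-dependence of $\tilde A(u)$ on the chosen contour so that the Euler-product bound produces genuine uniformity $\lvert W \rvert^\epsilon$ rather than a bound growing with $\omega(W)$, especially in the regime where $W$ is composed of many small primes.
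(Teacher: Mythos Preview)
Your approach is correct and is essentially equivalent to the paper's, which uses Perron's formula in the $s$-variable, shifts the contour to $\Re(s)=-\tfrac12$, extracts the residue at the double pole $s=0$, and bounds the shifted integral. Your power-series formulation in $u$ (with $u$ playing the role of $q^{-s}$) and coefficient extraction via Cauchy on $\lvert u\rvert=q^{1/2}$ is the same computation in different coordinates: the double pole at $u=1$ corresponds to the paper's double pole at $s=0$, and your circle $\lvert u\rvert=q^{1/2}$ corresponds to the paper's line $\Re(s)=-\tfrac12$.

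The main practical difference is that your packaging sidesteps some bookkeeping the paper must do: by working with the rational function $G(u)=A(u)/(1-u)^2$ you collapse the paper's infinite family of simple poles at $s=2\pi i m/\log q$ into the single point $u=1$, so there is no need to argue that those residues cancel in pairs or to take limits over growing rectangles. Conversely, the paper's formulation makes the contour shift slightly more routine once those technicalities are handled. Both routes reach the same main term and the same error exponent.

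On your flagged obstacle: the uniformity in $W$ is fine. For the $W_2$-factors you have $1+O(\lvert P\rvert^{-1/2})$ on $\lvert u\rvert=q^{1/2}$, while the $W_3$-factors are actually $O(\lvert P\rvert^{-1/2})$ (not $1+O(\cdot)$), which only helps. The product over $P\mid W$ of $(1+c\lvert P\rvert^{-1/2})$ is $\ll_\epsilon \lvert W\rvert^\epsilon$ by the same argument that gives the divisor bound, and the subtracted terms $A(1)$, $A'(1)(u-1)$ in your definition of $\tilde A$ are at most $O(\degree W)\ll_\epsilon\lvert W\rvert^\epsilon$ on the circle, so $\lvert\tilde A(u)\rvert\ll_\epsilon\lvert W\rvert^\epsilon$ there as needed.
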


\begin{proof}
Let $c>0$. Using Perron's formula, we have
\begin{align*}
\sum_{\substack{B \in \mathcal{A}_{\leq k} \backslash \{ 0 \} \\ \rad (B,W) = \rad W_3}} 
	\frac{\phi (B)}{\lvert B \rvert^2} 
= &(q-1) \sum_{\substack{B \in \mathcal{M}_{\leq k} \\ \rad W_3 \mid B \\ (B,W_2)=1}} 
	\frac{\phi (B)}{\lvert B \rvert^2} 
= \frac{q-1}{2 \pi i} \int_{\Re (s) =c}
	\frac{q^{(k+\frac{1}{2})s}}{s} 
	\sum_{\substack{B \in \mathcal{M} \\ \rad W_3 \mid B \\ (B,W_2)=1}} 
	\frac{\phi (B)}{\lvert B \rvert^{2+s}} \mathrm{d} s \\
= &\frac{q-1}{2 \pi i} \int_{\Re (s) =c}
	\frac{F(s)}{s} \mathrm{d} s 
\end{align*}
where
\begin{align*}
F(s)
:= q^{(k+\frac{1}{2})s}
	\prod_{P \mid W_3} 
		\bigg( \frac{1 - \lvert P \rvert^{-1}}{\lvert P \rvert^{1+s} -\lvert P \rvert^{-1}} \bigg)
	\prod_{P \mid W_2} 
		\bigg( \frac{\lvert P \rvert^{1+s} -1}{\lvert P \rvert^{1+s} - \lvert P \rvert^{-1}} \bigg)
	\frac{1- q^{-1-s}}{1- q^{-s}} .
\end{align*}
We now shift the contour to $\Re (s) = -\frac{1}{2}$. Formally, this will be done as the limit as $m \longrightarrow \infty$ of a rectangular contour, orientated anticlockwise, with vertices at $c \pm \frac{(4m+1) \pi i}{\log q}$ and $-\frac{1}{2} \pm \frac{(4m+1) \pi i}{\log q}$. The contour itself avoids any singularities. Due to the vertical periodicity of $F(s)$, we have
\begin{align*}
\int_{c + \frac{(4m+1) \pi i}{\log q}}^{-\frac{1}{2} + \frac{(4m+1) \pi i}{\log q}}
	\frac{F(s)}{s} \mathrm{d} s 
= \int_{c + \frac{\pi i}{\log q}}^{-\frac{1}{2} + \frac{\pi i}{\log q}}
	\frac{F(s)}{s + \frac{4m \pi i}{\log q}} \mathrm{d} s 
\ll \frac{\log q}{m} 
	\int_{c + \frac{\pi i}{\log q}}^{-\frac{1}{2} + \frac{\pi i}{\log q}}
	\lvert F(s) \rvert \mathrm{d} s 
\longrightarrow 0 ,
\end{align*}
as $m \longrightarrow \infty$. For the line $\Re (s) = - \frac{1}{2}$, again using the vertical periodicity of $F(s)$, we have
\begin{align*}
&\int_{\Re (s) = - \frac{1}{2}} \frac{F(s)}{s} \mathrm{d} s \\
= &\sum_{m=0}^{\infty}
	\bigg( \int_{t=0}^{1} 
		\frac{F \big(- \frac{1}{2} + \frac{4 \pi i}{\log q} (m+t ) \big)}{- \frac{1}{2} + \frac{4 \pi i}{\log q} (m+t )} \mathrm{d} t
	+ \int_{t=0}^{1} 
		\frac{F \big(- \frac{1}{2} + \frac{4 \pi i}{\log q} (-m-1+t ) \big)}{- \frac{1}{2} + \frac{4 \pi i}{\log q} (-m-1+t )} \mathrm{d} t \bigg) \\
= &\sum_{m=0}^{\infty}
	\bigg( \int_{t=0}^{1} 
		\frac{F \big(- \frac{1}{2} + \frac{4 \pi i}{\log q} t \big)}{- \frac{1}{2} + \frac{4 \pi i}{\log q} (m+t )} \mathrm{d} t
	+ \int_{t=0}^{1} 
		\frac{F \big(- \frac{1}{2} + \frac{4 \pi i}{\log q} t \big)}{- \frac{1}{2} + \frac{4 \pi i}{\log q} (-m-1+t )} \mathrm{d} t \bigg) \\
= &\sum_{m=0}^{\infty}
	\int_{t=0}^{1} 
		\frac{-1 + \frac{4 \pi i}{\log q} (2t-1)}{\Big(- \frac{1}{2} + \frac{4 \pi i}{\log q} (m+t ) \Big) \Big( - \frac{1}{2} + \frac{4 \pi i}{\log q} (-m-1+t ) \Big)} 
		F \Big(- \frac{1}{2} + \frac{4 \pi i}{\log q} t \Big) \mathrm{d} t \\
= &\sum_{m=0}^{\infty} \frac{(\log q)^2}{(m+1)^2}
	\int_{t=0}^{1} 
	\Big\lvert F \Big(- \frac{1}{2} + \frac{4 \pi i}{\log q} t \Big) \Big\rvert \mathrm{d} t \\
= & O_\epsilon \Big( q^{-\frac{k}{2}} \lvert W \rvert^{\epsilon} \Big) ,
\end{align*}
for $\epsilon > 0$. The last line follows from a bound on $\Big\lvert F \Big(- \frac{1}{2} + \frac{4 \pi i}{\log q} t \Big) \Big\rvert$, which can be obtained via similar techniques as those found in Section A.2 of \cite{Yiasemides2020_PhDThesis_PostMinorCorr}. Now let us consider the singularities. $\frac{F(s)}{s}$ has a double pole at $s=0$ and single poles at $s= \pm \frac{2m \pi i}{\log q}$ for integers $m>0$. The single poles cancel due to the fact that
\begin{align*}
\lim_{s \longrightarrow \frac{2m \pi i}{\log q}} \frac{F(s) \big( s - \frac{2m \pi i}{\log q} \big)}{s}
= - \lim_{s \longrightarrow - \frac{2m \pi i}{\log q}} \frac{F(s) \big( s + \frac{2m \pi i}{\log q} \big)}{s} .
\end{align*}
For the double pole we must evaluate
\begin{align*}
(q-1) \lim_{s \longrightarrow 0} \frac{\mathrm{d}}{\mathrm{d}s} s F(s) .
\end{align*}
The product rule must be applied due to the various factors, and the main term (as $k \longrightarrow \infty$) will come from when we differentiate the factor $q^{(k + \frac{1}{2})s}$, while we get lower order terms when we differentiate the other factors (one may wish to use Lemma 4.2.2 of \cite{Yiasemides2020_PhDThesis_PostMinorCorr} for this). We obtain
\begin{align*}
(q-1) \lim_{s \longrightarrow 0} \frac{\mathrm{d}}{\mathrm{d}s} s F(s) 
= &\frac{(q-1)^2}{q}
	\Big( k + O ( \log \degree W) \Big) 
	\prod_{P \mid W_3} 
		\bigg( \frac{1 - \lvert P \rvert^{-1}}{\lvert P \rvert -\lvert P \rvert^{-1}} \bigg)
	\prod_{P \mid W_2} 
		\bigg( \frac{\lvert P \rvert -1}{\lvert P \rvert - \lvert P \rvert^{-1}} \bigg) \\
= &\frac{(q-1)^2}{q}
	\Big( k + O ( \log \degree W) \Big) 
	\prod_{P \mid W} 
		(1 + \lvert P \rvert^{-1})^{-1}
	\prod_{P \mid W_3} 
		\lvert P \rvert^{-1} .
\end{align*}
\end{proof}

\textbf{Acknowledgements:} This research was conducted during a postdoctoral fellowship at the University of Nottingham funded by the EPSRC research grant ``Modular Symbols and Applications'' (grant number EP/S032460/1). The author is grateful to the research council for this funding, and to the grant holder, Nikolaos Diamantis, for his support. The author is also grateful to Ofir Gorodetsky and Joshua Pimm for helpful conversations and references to related work.

\bibliography{YiasemidesBibliography1}{}
\bibliographystyle{YiaseBstNumer1}

\end{document}